\documentclass[11pt,reqno]{amsart}
\usepackage{amsmath}
\usepackage{amssymb}
\usepackage{amsthm}
\usepackage{mathtools}
\usepackage{color}
\usepackage{eucal}
\usepackage{tikz}
\usepackage{gastex}
\usepackage{stmaryrd}
\usepackage{caption,subcaption}
\usepackage{latexsym}
\usepackage{indentfirst}
\usepackage{graphicx,accents}

\usetikzlibrary{snakes}

\DeclareSymbolFont{rsfscript}{OMS}{rsfs}{m}{n}
\DeclareSymbolFontAlphabet{\mathrsfs}{rsfscript}

\numberwithin{equation}{section}
\newtheorem{prop}{Proposition}[section]
\newtheorem{teor}[prop]{Theorem}
\newtheorem{lem}[prop]{Lemma}
\newtheorem{cor}[prop]{Corollary}

\def\Jc{\mathrel{\mathrsfs{J}}}
\def\Dc{\mathrel{\mathrsfs{D}}}
\def\Hc{\mathrel{\mathrsfs{H}}}
\def\Lc{\mathrel{\mathrsfs{L}}}
\def\Rc{\mathrel{\mathrsfs{R}}}
\def\Kc{\mathrel{\mathrsfs{K}}}
\def\Rcc{\mathrm{R}}

\def\Lcc{\mathrm{L}}
\def\Dcc{\mathrm{D}}

\def\Kcc{\mathrm{K}}

\def\ep{\epsilon} 
\def\up{\upsilon} 
\def\la{\lambda} 
\def\ka{\kappa} 
\def\be{\beta} 
\def\si{\sigma} 

\def\ap{\approx}
\def\xr{\xrightarrow}
\def\cev{\overset{{}_{\shortleftarrow}}}
\def\cevl{\overset{{}_{\longleftarrow}}}
\def\cevm{\overset{{}_{\leftarrow}}}
\def\ol{\overline}

\renewcommand{\iff}{if and only if}

\DeclareMathOperator{\ft}{FI}
\DeclareMathOperator{\fo}{F_1}
\DeclareMathOperator{\foo}{F_1^1}
\DeclareMathOperator{\fot}{F_1^\circ}
\DeclareMathOperator{\La}{L}
\DeclareMathOperator{\mrx}{MR}
\DeclareMathOperator{\mx}{M}
\DeclareMathOperator{\mi}{i-M}
\DeclareMathOperator{\mri}{i-MR}
\DeclareMathOperator{\G}{G}
\DeclareMathOperator{\Ho}{H}

\DeclareMathOperator{\R}{R}

\def\ftx{\ft(X)}
\def\ftt{\ft_2}

\def\Gp{\G^+}
\def\Hx{\Ho}
\def\Hxp{\Ho^+}

\def\Hix{\Ho_i}
\def\mxm{\mx^1}
\def\lx{\La}
\def\lo{\La_1}
\def\lop{\La_1^+}
\def\lom{\La_1^-}
\def\ltp{\La_2^+}
\def\ltm{\La_2^-}
\def\lt{\La_2}

\def\pres{\langle\G,\R\rangle}

\title{Regular semigroups weakly generated by one element}
\author{Lu\'\i s Oliveira}
\address{CMUP, Departamento de Matem\'atica,
Faculdade de Ci\^encias, Universidade do Porto,
rua do Campo Alegre s/n, 4169-007 Porto, Portugal}
\email{loliveir@fc.up.pt}

\begin{document}

\begin{abstract}
In this paper we study the regular semigroups weakly generated by a single element $x$, that is, with no proper regular subsemigroup containing $x$. We show there exists a regular semigroup $\fo$ weakly generated by $x$ such that all other regular semigroups weakly generated by $x$ are homomorphic images of $\fo$. We define $\fo$ using a presentation where both sets of generators and relations are infinite. Nevertheless, the word problem for this presentation is decidable. We describe a canonical form for the congruence classes given by this presentation, and explain how to obtain it. We end the paper studying the structure of $\fo$. In particular, we show that the `free regular semigroup $\ftt$ weakly generated by two idempotents' is isomorphic to a regular subsemigroup of $\fo$ weakly generated by $\{xx',x'x\}$.
\end{abstract} 

\subjclass[2020]{(Primary) 20M17, (Secondary) 20M05, 20M10}
\keywords{Regular semigroup, Weakly generated semigroup, Word problem}

\maketitle

\section{Introduction}

An element $s$ of a semigroup $S$ is called \emph{regular} if $sts=s$ for some $t\in S$. Note that $s'=tst$ is a (von Neumann) \emph{inverse} of $s$, that is, $ss's=s$ and $s'ss'=s'$. We denote by $V(s)$ the set of all inverses of $s$ in $S$. A \emph{regular semigroup} is a semigroup with all elements regular. As usual, we denote by $E(S)$ the set of idempotents of $S$. 


A subset of a regular semigroup $S$ may not generate a regular subsemigroup. Hence, the following notion of ``weakly generated'' seems natural: a regular subsemigroup $T$ of $S$ is weakly generated by a subset $X$ if $T$ has no proper regular subsemigroup containing $X$. Of course, this does not mean that $T$ is effectively generated by $X$. Very often, the subsemigroup generated by $X$ is a proper non-regular subsemigroup of $T$. In particular, we say that $S$ is weakly generated by $X$ if $S$ has no proper regular subsemigroup containing $X$. We should also point out that the same set $X$ may weakly generate several distinct regular subsemigroups of $S$.

The interest in studying the structure of regular semigroups weakly generated by a set $X$ came first from the theory of e-varieties of regular semigroups (see \cite{ha1,ks1} for the definition of e-variety). The existence of bifree objects on an e-variety $\bf V$ depends on the validity of the following property in $\bf V$: for all $S\in\bf V$ and all matched subset $A$ of $S$, there exists a unique regular subsemigroup of $S$ weakly generated by $A$ (see \cite{yeh}).

In \cite{LO22} we studied the structure of the regular semigroups weakly generated by idempotents. We proved the existence of a \emph{free regular semigroup $\ftx$ weakly generated by a set $X$ of idempotents} in the sense that $\ftx$ is weakly generated by $X$ and all other regular semigroups weakly generated by $X$ are homomorphic images of $\ftx$. We got $\ftx$ by introducing a presentation with both sets of generators and relations infinite. Despite that fact, the word problem for that presentation is decidable since a procedure to obtain a canonical representative for each congruence class was described. 

If $\ft_k$ denotes $\ftx$ for $|X|=k$, we proved that $\ftt$ contains copies of all $\ft_k$ as subsemigroups. Thus, any regular semigroup generated by a finite set of idempotents must strongly divide $\ftt$, that is, must be a homomorphic image of some regular subsemigroup of $\ftt$. The case $\ft_1$ is of no interest since any regular semigroup weakly generated by a single idempotent is just the trivial semigroup. 

We intend to look now to the general non-idempotent case. In other words, we plan to see if there exists a regular semigroup $F(X)$ weakly generated by a set $X$ (not necessarily of idempotents) such that all other regular semigroups weakly generated by $X$ are homomorphic images of $F(X)$. The case where $X$ is a singleton set is now distinct from the idempotent case: there are nontrivial regular semigroups weakly generated by a single element. So, in this paper, we focus only on the case where $X$ is a singleton set and prove that $F(X)$ exists for $|X|=1$. 

This paper will follow the structure of \cite{LO22}. In fact, we will use the same terminology, but the concepts will be more complex here. For example, terms such as landscape and mountain will be used here again but to refer to words with a more intricate structure. The results we will present are also similar to the ones of \cite{LO22}, and some of the proofs use even the same arguments (or with just small adaptations), although they refer to more complex objects. For the sake of completeness, we decided to include those proofs again in this paper.

We will denote $F(X)$ by $\fo$ for $|X|=1$, and we will represent by $x$ the unique element of $X$. In fact, we will usually not mention $X$ but its unique element $x$ instead. As stated above, the main goal of this paper is to prove the existence of $\fo$. We begin by introducing $\fo$ using a presentation similarly to \cite{LO22}. It is an interesting exercise to compare the presentation for $\fo$ with the presentation for $\ftx$. We will see their resemblance, but also notice a new `ingredient' appearing in $\fo$, namely conjugation. We then prove that the presentation introduced here has decidable word problem by obtaining a canonical form for each congruence class. To attest the richness of the structure of $\fo$, we will prove that $\fo$ has a regular subsemigroup weakly generated by the idempotents $\{xx',x'x\}$ isomorphic to $\ftt$, where $x'$ is the only inverse of $x$ in $\fo$. Since all regular semigroups weakly generated by a finite set of idempotents strongly divide $\ftt$, they also strongly divide $\fo$.

This paper is organized as follows. In the next section, we recall some basic concepts needed for this paper. We include also a brief description of the construction of $\ftt$ since we will need it later. We change also the terminology used in \cite{LO22} so that it does not overlap with the terminology used here. In Section 3 we introduce a presentation and define $\fo$. We present also a solution for the word problem for this presentation. We prove that $\fo$ is a regular semigroup weakly generated by $x$ in Section 4. To achieve this result we need to characterize first the Green's relations on $\fo$. Since we characterize the Green's relations in this section, we end it by describing also the idempotents, the inverses and the natural partial order. However, as it will become evident, this description will be just theoretical and not very useful for practical purposes. In Section 5, we show that all regular semigroups weakly generated by $x$ are homomorphic images of $\fo$. Finally, in the last section, we prove that $\fo$ has a regular subsemigroup weakly generated by $\{xx',x'x\}$ isomorphic to $\ftt$; thus concluding that all regular semigroups generated by a finite set of idempotents strongly divide $\fo$.

\section{Preliminaries}\label{sec2}

Given a semigroup $S$, $S^1$ denotes the monoid obtained by adding an identity element to $S$ if necessary. The quasi-orders $\leq_{\Lc}$,  $\leq_{\Rc}$ and $ \leq_{\Jc}$ on $S$ are defined as follows:
$$s\leq_{\Lc} t\Leftrightarrow S^1s\subseteq S^1t,\quad s\leq_{\Rc} t\Leftrightarrow sS^1\subseteq tS^1,\quad s\leq_{\Jc} t\Leftrightarrow S^1sS^1\subseteq S^1tS^1.$$
Thus, the Green's equivalence relations $\Lc$, $\Rc$ and $\Jc$ are just the relations
$$\Lc\,=\,\leq_{\Lc}\cap\geq_{\Lc},\quad \Rc\,=\,\leq_{\Rc}\cap\geq_{\Rc}\quad\mbox{ and }\quad \Jc\,=\,\leq_{\Jc}\cap\geq_{\Jc}\,,$$
where $\geq_{\Lc}$, $\geq_{\Rc}$ and $\geq_{\Jc}$ are respectively the dual relations of  $\leq_{\Lc}$, $\leq_{\Rc}$ and $\leq_{\Jc}$. Let $\Hc\,=\,\Lc\cap\Rc$ and $\Dc\,=\,\Lc\vee\Rc$ be the other two Green's relations. For a regular semigroup $S$, we can use $S$ instead of $S^1$ in the previous definitions. We will denote by $\Kcc_a$ the $\Kc$-class of the element $a\in S$, for $\Kc\in\{\Hc,\Lc,\Rc,\Dc,\Jc\}$. 

A crucial concept for this paper is also the notion of \emph{sandwich set} $S(e,f)$ of two idempotents $e$ and $f$ of $S$. This concept has a few equivalent definitions. We list some of them next:
$$\begin{array}{rl}
S(e,f)\hspace*{-.2cm}&=V(ef)\cap E(fSe)=fV(ef)e\\ [.2cm]
&=\{g\in E(S)\,|\; fg=g=ge \mbox{ and } egf=ef\}\,.
\end{array}$$
The sandwich set $S(e,f)$ is nonempty precisely when $ef$ is a regular element of $S$. Thus, $S(e,f)$ is always nonempty if $S$ is regular. In fact, $S(e,f)$ is always a rectangular band and a subsemigroup of $S$, whenever it is nonempty. There is another interesting property of these sets: if $e,e_1,f,f_1\in E(S)$ are such that $e\Lc e_1$ and $f\Rc f_1$, then $S(e,f)=S(e_1,f_1)$. Hence, for regular semigroups, one can extend the definition of sandwich set to all elements of $S$ as follows: for $a,b\in S$, let $S(a,b)=S(a'a,bb')$ for some (any) $a'\in V(a)$ and $b'\in V(b)$.

If $S$ is a regular semigroup, there is also another important relation on $S$,  the \emph{natural partial order} $\leq\,$:
$$s\leq t\quad\Leftrightarrow\quad  s=et=tf \;\mbox{ for some } e,f\in E(S)\,.$$
A useful fact about the natural partial order is that we can choose the idempotents $e$ and $f$ such that $e\Rc s\Lc f$. 


Let $X$ be a nonempty set. As usual, we denote by $X^+$ the free semigroup on $X$. The elements of $X$ are called \emph{letters} in this context, while the elements of $X^+$ are called \emph{words}. The \emph{content} of a word $u$ is the set of letters from $X$ that occur in $u$, and the \emph{length} of $u$ is the number of letters (counting repetitions) that occur in $u$. For $u\in X^+$, let $\si(u)$ and $\tau(u)$ denote the first and the last letter of $u$, respectively. 

Next, we recall the construction of $\ftt$ introduced in \cite{LO22}\label{hfi} for any set $X$. As explained before, we need to modify the notation and terminology used in \cite{LO22} so that it does not overlap with the one used in this paper. Hence $h$, $H$ and $\varrho$ will replace the symbols $g$, $G$ and $\rho$ used in \cite{LO22}. Given a triple $h=(a,b,c)$, we denote $a$, $b$ and $c$ by $h^r$, $h^c$ and $h^l$, respectively. 

Let $X=\{e,f\}$ be a two element set, and consider
$$\Ho_0=\{1\}\quad\mbox{ and }\quad\Ho_1=\{(1,e,1),\,(1,f,1)\},$$
where $1$ is a new symbol not in $X$. We identify each $x\in X$ with the triple $(1,x,1)\in \Ho_1$. Thus $x^r=x^l=1$. Recursively for $i>1$, assume that $\Ho_{i-1}$ is a set of triples and let $\overline{\Ho}_i=\Ho_{i-1}\times \Ho_{i-2}\times \Ho_{i-1}$ and
$$\Hix=\left\{h\in \overline{\Ho}_i\,|\; h^l\neq h^r,\; h^c\in\{(h^l)^l,(h^l)^r\}\cap\{(h^r)^l,(h^r)^r\}\right\}\,.$$ 
Note that 
$$\Ho_2=\{h_{2,1},\,h_{2,2}\}$$ 
for $h_{2,1}=(e,1,f)$ and $h_{2,2}=(f,1,e)$, and
$$\Ho_3=\{h_{3,1},\,h_{3,2},\,h_{3,3},\,h_{3,4}\}$$ 
for $h_{3,1}=(h_{2,1},e,h_{2,2})$, $h_{3,2}=(h_{2,1},f,h_{2,2})$, $h_{3,3}=(h_{2,2},e,h_{2,1})$ and $h_{3,4}=(h_{2,2},f,h_{2,1})$. Let $\Ho=\cup_{i\in\mathbb{N}_0}\Hix$. 

Consider the following two relations on the free semigroup $\Hxp$: 
$$\varrho_e=\{(1h,h),(h1,h),(h^2,h)\,|\;h\in \Hx\}$$
and
$$\varrho_s=\left\{(h^ch^lh,h),(hh^rh^c,h),(h^rh^chh^ch^l,h^r h^ch^l)\,|\; h\in \Hix \mbox{ with } i\geq 2 \right\}.$$
Let $\ft^1_2$ be the semigroup given by the presentation $\langle \Hx,\,\varrho_e\cup \varrho_s\rangle$, that is, $\ft^1_2=\Hxp/\varrho$ where $\varrho$\label{varrho} is the smallest congruence on $\Hxp$ containing $\varrho_e\cup\varrho_s$. Note that $\varrho_e$ just tells us that $\ft^1_2$ is an idempotent generated monoid with identity element $1\varrho$. Using both $\varrho_e$ and $\varrho_s$, one can show easily that $(h^rh^c)\varrho$ and $(h^ch^l)\varrho$ are idempotents of $\ft^1_2$. Then $\varrho_s$ just turns $h\varrho$ into an element of the sandwich set $S((h^rh^c)\varrho,(h^ch^l) \varrho)$. The semigroup $\ftt$ is defined as $\ft^1_2\setminus\{1\varrho\}$ ($\{1\varrho\}$ is the group of units of $\ft^1_2$). We can also easily see that $1\varrho$ is constituted by all words of $\Hxp$ with content $\{1\}$.

A nontrivial \emph{i-mountain} (called mountain in \cite{LO22}) is a word $h_0h_1\cdots h_{2n}\in\Hxp$, with $n\geq 1$ and $h_i\in\Hx$ for all $0\leq i\leq 2n$, such that
\begin{itemize}
\item[$(i)$] $h_0=1=h_{2n}$;
\item[$(ii)$] $h_{i-1}\in\{h_i^r,h_i^l\}$ for all $1\leq i\leq n$; and
\item[$(iii)$] $h_i\in\{h_{i-1}^r,h_{i-1}^l\}$ for all $n<i\leq 2n$.
\end{itemize} 
In \cite{LO22} we proved that each $\varrho$-class of $\ftx$ contains a unique nontrivial i-mountain and solved the word problem for $\ftx$ by given a process to construct it. In this paper we will use the term mountain to refer to more complex words.

Let $\mi$ (denoted by $\mx(X)$ in \cite{LO22} for $|X|=2$) denote the set of all nontrivial i-mountains of $\Hxp$. In \cite{LO22} we defined an operation $\odot$ in $\mi$ that turns $\mi$ into a semigroup isomorphic to $\ftt$. The concept of \emph{i-river} (called river in \cite{LO22}) and the process of \emph{uplifting of i-rivers} (called uplifting of rivers in \cite{LO22}) were central. In this paper we will use the terms river and uplifting of rivers again. Their definitions will be very similar to the ones introduced in \cite{LO22}. They differ only on the `context' where they are defined. It will be immediate to see what i-river and uplifting of i-rivers mean from the notions of river and uplifting of rivers introduced here, but the readers may consult \cite{LO22} for further details.

The semigroup $\ftt$ is regular and weakly generated by the two idempotents $e$ and $f$ (identifying each letter with the corresponding $\varrho$-class). It has the following universal property: all regular semigroups weakly generated by two idempotents are homomorphic images of $\ftt$ (under a homomorphism that sends $e\varrho$ and $f\varrho$ into those two idempotents). In Section \ref{sec6} we show that $\ftt$ is embedded into the semigroup $\fo$ that we will construct in the next section.

\section{The presentation $\pres$}\label{sec3}

This section is devoted to the construction of $\fo$ and, after some initial terms are introduced, we will divide it into subsections for a better organization. As we will see, the construction of $\fo$ resembles that of $\ftt$, and it will be easier to define and work with the monoid $\foo$ obtained from $\fo$ by adding a new identity. In contrast to the $\ftt$ case, we will work now with 5-tuples since more information is needed to be included in those tuples. Conjugation will have now an important role too and the two extra entries will contain information about when and where to apply conjugation. 

We begin by defining a presentation $\pres$ for $\foo$. From now on, and if nothing is said in contrary, $X$ will be the singleton set $\{x\}$. Let $A$ be the set $\{1,x,x'\}$. We endow $A$ with a natural involution $'$ by setting $(x')'=x$ and $1'=1$. We call \emph{anchors} to the elements of $A$. As should be expected, $x'$ will represent the inverse of $x$ in $\foo$, while $1$ will correspond to its identity element. For that reason, we will make some abuse of terminology from now on by calling $a'$ the inverse (anchor) of $a$, for any $a\in A$.

The generator set $\G$ will contain the set $A$. All other elements from $\G$ will be special 5-tuples. We will denote the set of all those 5-tuples by $\G^5$. Thus $G=A\cup \G^5$. The set $\G^5$ will be defined recursively as the union of sets $\G_i$ for $i\geq 1$. In fact, each $\G_i$ will be the union of two other sets $\G_{i,e}$ and $\G_{i,d}$. We will describe these sets in more detail below.

We will use the notation 
$$g=(g^l,g^{la},g^c,g^{ra},g^r)$$
to refer to the entries of a 5-tuple $g$ of $\G^5$. The entries $g^{la}$ and $g^{ra}$ will be always anchors. For that reason, they will be called the \emph{left anchor} and the \emph{right anchor} of $g$, respectively. The entries $g^l$, $g^c$ and $g^r$ will be called the \emph{left}, \emph{middle} and \emph{right} entries of $g$. The entries $g^l$ and $g^r$ will be 5-tuples of $\G_{i-1}$ if $g\in\G_i$ for $i>1$; and the entry $g^c$ will be a 5-tuple of $\G_{i-2}$ if $g\in\G_i$ for $i>2$.

As we can see, the left, middle and right entries of a 5-tuple $g\in\G^5$ are usually 5-tuples again. If we need to refer to an entry of $g^l$ for example, we will use the abbreviation $g^{ls}$ to refer to the entry $(g^l)^s$ of $g^l$, for $s\in\{l,la,c,ra,r\}$. But, some entries of $g^l$ may be again 5-tuples. In general, we convene that
$$g^{s_1\cdots s_n}=(((g^{s_1})^{s_2})\cdots)^{s_n}\,,$$
for $s_1,\cdots , s_{n-1}\in\{l,c,r\}$ and $s_n\in\{l,la,c,ra,r\}$, whenever the right side of this equality makes sense. We consider also $g^1=g$.

To help following the remaining of this section, we will divide it into subsections. In the first two subsections we define the sets $\G_{i,e}$ and $\G_{i,d}$, respectively. It is important that the reader pays some attention to the subtleties involved in these definitions as they may be important to understand some of the arguments presented during the paper. In the following subsection we introduce the set of relations $\R$. This set will give us an interpretation for the 5-tuples of $\G^5$. In the forth subsection some special words are described. Finally, the operation of uplifting of rivers is introduced in the fifth subsection and used in the last subsection to present a solution to the word problem for this presentation.

\subsection{The sets $\G_{i,e}$}

We begin by setting $g_{xx'}$ as the 5-tuple
$$g_{xx'}=(1,1,x,x',1)\,.$$
The 5-tuple $g_{xx'}$ will represent the element $xx'$ of $\fo$. For technical reasons that will become clear later, it is convenient to consider the 5-tuple $g_{xx'}$ instead of $xx'$. Set $\G_{1,e}=\{g_{xx'}\}$ and define 
$$\G_{i,e}=\big\{\big(g,(g^{la})',g^l,(g^{ra})',g\big),\,\big(g,(g^{ra})',g^l,(g^{la})',g\big)\,:\;g\in\G_{i-1,e}\big\}$$
for $i\geq 2$. To help clarifying which tuples belong to these sets, note that $\G_{2,e}$ has two elements:\label{G2e}
$$g_{2,e,1}=(g_{xx'},1,1,x,g_{xx'})\quad\mbox{ and }\quad g_{2,e,2}=(g_{xx'},x,1,1,g_{xx'})\;;$$
while $\G_{3,e}$ has four elements:
$$\begin{array}{l}
	(g_{2,e,1},1,g_{xx'},x',g_{2,e,1})\,,\quad (g_{2,e,1},x',g_{xx'},1,g_{2,e,1})\,,\\ [.2cm]
	(g_{2,e,2},1,g_{xx'},x',g_{2,e,2})\quad\mbox{ and }\quad(g_{2,e,2},x',g_{xx'},1,g_{2,e,2})\,.
\end{array}$$

Analyzing the structure of the 5-tuples of $\G_{i,e}$ in more detail for $i>1$, we see that $g\in\G_{i,e}$ if and only if
\begin{itemize}
\item[$(i)$] $g^l=g^r\in\G_{i-1,e}$, 
\item[$(ii)$] $g^c=g^{l^2}=g^{lr}$, and 
\item[$(iii)$] either $\{g^{la},g^{ra}\}=\{1,x\}$ if $i$ even or $\{g^{la},g^{ra}\}=\{1,x'\}$ if $i$ odd.
\end{itemize} 
Hence, the set $\G_{i,e}$ has twice the number of elements of $\G_{i-1,e}$, and so $\G_{i,e}$ has $2^{i-1}$ elements. Further,
$$\G_{i,e}\subseteq \G_{i-1,e}\times A\times\G_{i-2,e}\times A\times\G_{i-1,e}\,,$$
for $i>1$, if one considers $\G_{0,e}=\{1\}$. Let $\G_e=\cup_{i\in\mathbb{N}}\G_{i,e}$. The 5-tuples of $\G_e$ will correspond to the 5-tuples of $\G$ with the same left and right entry.

By construction, both $g^{l^2}$ and $g^{lr}$ are equal to $g^c$ for each $g\in\G_{i,e}$ with $i\geq 2$. Of course, they have different meanings: $g^c$ represents the middle entry of $g$, while $g^{l^2}$ and $g^{lr}$ represent the left and right entries of $g^l$, respectively. In the special words that we will need to consider later, the 5-tuples of $\G$ will have to be `anchored' to each other. For that, we need to attribute a side to $g^c$ inside $g^l$ in order to identify `the anchor of $g^c$ in $g^l$ with respect to $g$'. Note that only one of the elements $g^{l^2a}$ and $g^{lra}$ is equal to  $(g^{la})'$. For each $g$, we define
$$l_a=\left\{\begin{array}{ll}
l^2 & \mbox{ if } g^{l^2a}=(g^{la})' \\ [.2cm]
lr & \mbox{ if } g^{lra}=(g^{la})'\;.
\end{array}\right.$$
Therefore, the anchors $g^{l_aa}$ and $(g^{la})'$ are equal. 

We reinforce that the purpose of the notation $l_a$ is to identify which anchor $g^{l^2a}$ or $g^{lra}$ is equal to $(g^{la})'$, and that $l_a$ differs according to $g$. Hence, the use of $l_a$ only makes sense when we refer to a specific 5-tuple $g\in\G_i$ with $i\geq 2$. Note also that $g^{l_a}$ and $g^c$ have the same `value'. However, when we write $g^{l_a}$, we want to emphasize not only its `value', but also a specific entry of the 5-tuple $g^l$, either its left or its right entry.

In a similar manner we will use also the notation $r_a$. For each $g\in\G_{i,e}$ with $i\geq 2$,
$$r_a=\left\{\begin{array}{ll}
	r^2 & \mbox{ if } g^{r^2a}=(g^{ra})' \\ [.2cm]
	rl & \mbox{ if } g^{rla}=(g^{ra})'\;.
\end{array}\right.$$
We alert again that the use of $r_a$ only makes sense when associated with a specific 5-tuple $g\in\G_{i,e}$ with $i\geq 2$.

\subsection{The sets $\G_{i,d}$}

We now introduce the 5-tuples of $\G$ with distinct left and right entries. They will be described recursively in the sets $\G_{i,d}$ for $i\geq 1$. In contrast with the previous case, the left and right entries of a 5-tuple $g$ from $\G_{i,d}$ may belong not only to $\G_{i-1,d}$, but also to $\G_{i-1,e}$. Thus, we set $\G_i=\G_{i,e}\cup\G_{i,d}$. As we will see next, the definition of the sets $\G_{i,d}$ is more intricate than the definition of the sets $\G_{i,e}$.

In fact, the definition of the sets $\G_{i,d}$ is only interesting for $i>2$, since we set $\G_{1,d}$ and $\G_{2,d}$ as empty sets. Basically, what it says is that for $i=1$ and $i=2$ there are no 5-tuples in $\G$ with distinct left and right entries. Now, assume that $i>2$ and that $\G_{j,d}$ is defined for $j<i$. Consequently, the sets $\G_j$ are also defined. Let $\G_{i,d}$ be the set of all 5-tuples
$$g\in\overline{\G}_i=\G_{i-1}\times A\times\G_{i-2}\times A\times\G_{i-1}$$
such that
\begin{itemize}
\item[$(i)$] $g^l\neq g^r$;
\item[$(ii)$] $\big(g^c,g^{la}\big)=\big(g^{l^2},(g^{l^2a})'\big) $ or $\big(g^c,g^{la}\big)=\big(g^{lr},(g^{lra})'\big)$; 
\item[$(iii)$] $\big(g^c,g^{ra}\big)=\big(g^{rl},(g^{rla})'\big)$ or 
$\big(g^c,g^{ra}\big)=\big(g^{r^2},(g^{r^2a})'\big)$. 
\end{itemize}
From $(i)$ we know that the left entry of $g$ is distinct from its right entry. Note that $(ii)$ tells us, in particular, that $g^c$ is the left or the right entry of $g^l$. In fact, it tells us something stronger: either $g^c$ is the left entry of $g^l$ and the left anchor of $g$ is the inverse of the left anchor of $g^l$, or $g^c$ is the right entry of $g^l$ and the left anchor of $g$ is the inverse of the right anchor of $g^l$. Condition $(iii)$ is just the dual of $(ii)$ with respect to $g^r$.

Next, we list the elements of $\G_{3,d}$ as an example to help understand these sets. It has already 8 elements:
$$\begin{array}{ll}\label{G3d}
g_{3,d,1}=(g_{2,e,1},1,g_{xx'},1,g_{2,e,2})\;,\qquad &(g_{2,e,1},1,g_{xx'},x',g_{2,e,2})\;,\\ [.2cm]
g_{3,d,2}=(g_{2,e,1},x',g_{xx'},x',g_{2,e,2})\;, \qquad &(g_{2,e,1},x',g_{xx'},1,g_{2,e,2})\;, \\ [.2cm]
g_{3,d,3}=(g_{2,e,2},1,g_{xx'},1,g_{2,e,1})\;,\qquad &(g_{2,e,2},1,g_{xx'},x',g_{2,e,1})\;, \\ [.2cm]
g_{3,d,4}=(g_{2,e,2},x',g_{xx'},x',g_{2,e,1})\;,\qquad & (g_{2,e,2},x',g_{xx'},1,g_{2,e,1})\;. 
\end{array}$$
As the reader may have notice, we have attributed a `name' only to the elements of the first column. The reason is because only these elements will be important for Section \ref{sec6}. The elements from the second column will not be used there. It is also a combinatorial exercise to compute the cardinality of $\G_{i,d}$ and $\G_i$. Since we do not need that information for this paper, we leave that computation for the reader. We just point out the obvious fact that the size of the sets $\G_{i,d}$, $\G_{i,e}$ and $\G_i$ increases exponentially with $i$.

Like in the previous subsection, for each $g\in \G_{i,d}$, we need to attribute a side to $g^c$ inside $g^l$. In this case, there are two distinct situations accordingly to $g^l\in\G_{i-1,e}$ or $g^l\in\G_{i-1,d}$. If $g^l\in\G_{i-1,e}$, then $g^{l^2}$ and $g^{lr}$ are both equal to $g^c$. In this case, due to condition $(ii)$ above, we can define $l_a$ in the same manner as we did in the previous subsection. If $g^l\in\G_{i-1,d}$, then the definition of $l_a$ is more natural since only one of left and right entries of $g^l$ is equal to $g^c$: let $l_a=l^2$ if $g^{l^2}=g^c$ or let $l_a=lr$ if $g^{lr}=g^c$. The notation $r_a$ is now introduced similarly (see the previous subsection). We alert once more that these two notations only make sense when referring to a specific 5-tuple $g$.

By definition of both $\G_{i,d}$ and $\G_{i,e}$, observe that
$$g^{l_aa}=(g^{la})'\quad\mbox{ and }\quad g^{r_aa}=(g^{ra})'\,$$
in all cases where the notation makes sense (that is, except for $g=g_{xx'}$). This observation will be used frequently in this paper without further notice. A quick inspection to the definition of $g^{la}$ and $g^{ra}$ allow us to conclude also that $g^{la},g^{ra}\in\{1,x\}$ if $\up(g)$ even, while $g^{la},g^{ra}\in\{1,x'\}$ if $\up(g)$ odd. In fact, if $g\in\G_e$, we can say more: $\{g^{la},g^{ra}\}=\{1,x\}$ if $\up(g)$ even, while $\{g^{la},g^{ra}\}=\{1,x'\}$ if $\up(g)$ odd. If $g\in\G_{i,d}$, we may have $g^{la}=g^{ra}$ as can be observed in some elements of $\G_{3,d}$.

Note that the set $\G_{i,e}$ is also contained in $\overline{\G}_i\,$; whence $\G_i \subseteq \overline{\G}_i$. Let $\G_d=\cup_{i\in\mathbb{N}}\G_{i,d}$ and  $\G^5=\cup_{i\in\mathbb{N}}\G_i=\G_d\cup\G_e$. Set $\G=\G^5\cup A$ and $\G'=\G^5\cup\{1\}$. Thus $\G^5$ is the set of all 5-tuples of $\G$, and it is divided into the set $\G_e$ of all 5-tuples of $\G$ with the same left and right entries, and the set $\G_d$ of all 5-tuples of $\G$ with distinct left and right entries. The \emph{height} $\up(g)$ of an element $g\in\G'$ is defined as follows:
$$\up(1)=0\quad\mbox{ and }\quad \up(g)=i\,\mbox{ if }\, g\in\G_i\,.$$

\subsection{The relation $\R$}

Let us look now to the relation $\R$. We will denote by $\foo$ the semigroup given by the presentation $\pres$, that is, $\foo=\Gp/\rho$ where $\rho$ is the smallest congruence containing $\R$. We will write $u\approx v$ to indicate that the words $u$ and $v$ of $\Gp$ are $\rho$-equivalent. We will use also the notation $[u]$ to refer to the $\rho$-class $u\rho$. We want: 
\begin{itemize}
\item[$(i)$] $[x']$ to be an inverse of $[x]$ such that $[xx']=[g_{xx'}]$; 
\item[$(ii)$] $[1]$ to be the identity element of $\foo$; and
\item[$(iii)$] $[g]$ to be an idempotent for all $g\in\G^5$. 
\end{itemize}
Thus, we include in $\R$ the relation
$$\rho_e=\{(xx'x,x),(x'xx',x'),(g_{xx'},xx'),(1g,g),(g1,g),(g^2,g):g\in\G'\}\,.$$

We immediately see now that we could have omitted $g_{xx'}$ from $\G$. However, its inclusion gives a better feeling about the construction of $\G^5$: this list derives form the `idempotent' $xx'$, and not from $x$ or $x'$. Furthermore, we will see that the congruence classes of the 5-tuples of $\G^5$ constitute a transversal set for the $\Dc$-classes of $\fo$. We should also point out that we could have replaced $xx'$ with $x'x$ in the theory developed in this paper (we just needed to adapt the definition of $\G^5$ for the $x'x$ case). But the definition of $\R$ is not yet finished. We still need to include the conditions that relate $g$ with its entries. For that, we need to explain carefully some notation we will use in order to avoid future ambiguities. 

Given three letters $g_1,g\in\G'$ and $a\in A$, the triplet $g_1ag$ is \emph{left anchored} if $$(g_1,a)=(g^l,g^{la})\quad\mbox{ or }\quad (g_1,a)=(g^r,g^{ra})\,.$$ 
The notion of right anchored is not quite the dual: the triplet $gag_1$ is \emph{right anchored} if
$$(g_1,a)=(g^l,(g^{la})')\quad\mbox{ or }\quad (g_1,a)=(g^r,(g^{ra})')\,.$$
Observe that in the definition of left anchored, the anchor $a$ is one of the anchors of $g$, while in the definition of right anchored, the anchor $a$ is the inverse of one of the anchors of $g$. In both cases, we say that $g_1$ is \emph{anchored} to $g$. Finally, a triplet $g_1ag_2$, with $g_1,g_2\in\G'$ and $a\in A$, is \emph{anchored} if $g_1$ is anchored to $g_2$ or $g_2$ is anchored to $g_1$.

For $s\in\{l,r\}$, we may denote the anchored triplets $g^sg^{sa}g$ and $g(g^{sa})'g^s$ by $g^s\cdot g$ and $g\cdot g^s$, respectively. Note that this notation may become ambiguous if we don't indicate explicitly the value of $s$. For example, if $g\in\G_e$ and $h=g^l$, then the expression $h\cdot g$ is ambiguous. Note that $h$ is also equal to $g^r$ and, therefore, $h\cdot g$ may refer to either $g^lg^{la}g$ or $g^rg^{ra}g$, with $g^{la}\neq g^{ra}$. Hence, we must be careful when using this `dot' notation.

There is another instance where this `dot' notation may be useful but ambiguous too if we don't clarify it, namely $g^c\cdot g^s$ and $g^s\cdot g^c$ for $s\in\{l,r\}$. Note that $g^c$ may be equal to both $g^{sl}$ and $g^{sr}$. We define 
$$g^c\cdot g^s=g^cg^{s_aa}g^s=g^c(g^{sa})'g^s\quad\mbox{ and }\quad g^s\cdot g^c=g^s(g^{s_aa})'g^c=g^sg^{sa}g^c\,.$$
Clearly, these triplets are anchored by the way they are defined.

Next, we introduce the (non-anchored) triplets $g^L$ and $g^R$. Let $g^L=(g^{la})'g^lg^{la}$ and $g^R=(g^{ra})'g^rg^{ra}$. Note that
$$g^L\in\{1g^l1,\,xg^lx',\,x'g^lx\}\,.$$
In fact, we can say also that $g^L\neq xg^lx'$ if $\up(g)$ even, and $g^L\neq x'g^lx$ if $\up(g)$ odd. Similar conclusions are also true for $g^R$. Note now that
$$g\!\cdot\! g^l\!\!\cdot\! g = gg^Lg\,,\;\; g^c\!\!\cdot\! g^l\!\!\cdot\! g=g^cg^Lg\,,\;\; g\!\cdot\! g^r\!\!\cdot\! g=gg^Rg \;\mbox{ and }\; g\!\cdot\! g^r\!\!\cdot\! g^c=gg^Rg^c.$$
The next observation is also useful. Since $\;g^r\!\cdot g\cdot g^l=g^rg^{ra}g(g^{la})'g^l\;$ and 
$$g^r\!\cdot g^c\cdot g^l=g^r(g^{r_aa})'g^cg^{l_aa}g^l=g^rg^{ra}g^c(g^{la})'g^l\,,$$
the words $g^r\!\cdot g\cdot g^l$ and $g^r\!\cdot g^c\cdot g^l$ of length 5 differ only on the middle letter: we have $g$ in $g^r\!\cdot g\cdot g^l$ and $g^c$ in $g^r\!\cdot g^c\cdot g^l$.

Before we continue, this is a good place to make a pause and explain the main difference between the theory developed here and the one developed in \cite{LO22}. This explanation may help also the readers familiar with \cite{LO22} following this paper. We have already mentioned the close resemblance between the structure of these two papers. The main difference is on what we will consider to be landscapes and mountains. In \cite{LO22}, they were words from $\Hxp$ where any two consecutive letters had a special connection (see Section \ref{sec2}). Here, they will be words constituted by alternating letters from $\G'$ and $A$ such that any triplet subword with two letters from $\G'$ and one from $A$ is anchored. The notation introduced above will help us, in some cases, to give a less cumbersome aspect to those words.

Let $\rho_s$ be the following relation on $\G^+$:
$$\rho_s=\left\{(g^cg^Lg,g),(gg^Rg^c,g),(g^r\!\!\cdot\! g\!\cdot\! g^l,g^r\!\!\cdot\! g^c\!\cdot\! g^l)|\, g\in \G \mbox{ with }\up(g)\geq 2\right\}.$$
We set $\R=\rho_e\cup\rho_s$. Observe that $[1]$ is composed by all words from $\Gp$ of content $\{1\}$. Thus $\foo\setminus\{[1]\}$ is a subsemigroup of $\foo$. Let $\fo=\foo\setminus\{[1]\}$.

The next result gives a list of properties about the product in $\foo$ and terminates with an interpretation for $\rho_s$. We already know that $[g]$ is an idempotent of $\foo$ due to the definition of $\rho_e$. We will prove that $[g^cg^R]$ and $[g^Lg^c]$ are also idempotents of $\foo$. Then, the definition of $\rho_s$ is just to assure that $[g]\in S([g^Rg^c],[g^cg^L])$.

\begin{lem}\label{prodelem}
Let $g\in \G$ with $\up(g)>1$. Then:
\begin{itemize}
\item[$(i)$] $g\ap g^cg\ap gg^c\ap gg^Lg\ap gg^Rg$.
\item[$(ii)$] $[gg^L],\,[gg^R],\,[g^Lg],\,[g^Rg]\in E(\foo)$.
\item[$(iii)$] $[g^l\!\!\cdot\! g],[g^Lg],[g^r\!\!\cdot\! g],[g^Rg]\in \Lcc_{[g]}$ and $[g\!\cdot\! g^l],[gg^L],[g\!\cdot\! g^r],[gg^R]\in \Rcc_{[g]}$.
\item[$(iv)$] $g^Lg^cg^L\ap g^L$ and $g^Rg^cg^R\ap g^R$. 
\item[$(v)$] $[g^cg^L],\,[g^cg^R],\,[g^Lg^c],\,[g^Rg^c]\in E(\foo)$.
\item[$(vi)$] $[g]\in S([g^Rg^c],[g^cg^L])$.
\end{itemize}
\end{lem}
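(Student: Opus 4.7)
The plan is to work through the six claims in order, noting that (ii), (iii), (v) are formal corollaries while the real content is concentrated in (i), (iv), and (vi). No induction is needed: the proof of (i) is self-contained from $\rho_e\cup\rho_s$, and (iv) for $g$ will invoke (i) applied to $g^l$ (which is already proven for all applicable heights). For (i), I would leverage the two $\rho_s$ identities $g^cg^Lg\ap g$ and $gg^Rg^c\ap g$ together with idempotency of $[g^c]$ (from $\rho_e$ when $g^c\in\G'$, trivial when $g^c=1$). First, $g^cg\ap g^c(g^cg^Lg)\ap(g^cg^c)g^Lg\ap g^cg^Lg\ap g$, and dually $gg^c\ap g$. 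Then the word $gg^cg^Lg$ parsed as $(gg^c)g^Lg$ yields $gg^Lg$, while parsed as $g(g^cg^Lg)$ yields $g^2\ap g$; hence $gg^Lg\ap g$, and dually $gg^Rg\ap g$. Parts (ii) and (iii) are then immediate: for (ii), $(gg^L)^2=(gg^Lg)g^L\ap gg^L$ and similarly for the other three; for (iii), the factorization $g\ap g^cg^Lg=(g^c(g^{la})')(g^lg^{la}g)$ witnesses $[g]\leq_\Lcc[g^l\!\cdot\! g]$ (hence also $\leq_\Lcc[g^Lg]$), while $g\ap gg^Rg$ handles $[g^r\!\cdot\! g]$ and $[g^Rg]$; the reverse $\Lcc$-inequalities are trivial, and the $\Rcc$ membership uses $g\ap gg^Lg$ and $g\ap gg^Rg$ symmetrically.

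The heart of the proof is (iv). To show $g^Lg^cg^L\ap g^L$, I would reduce to the equivalent statement $g^lg^{la}g^c(g^{la})'g^l\ap g^l$, since premultiplying by $(g^{la})'$ and postmultiplying by $g^{la}$ then gives the claim directly. The crucial structural observation is that, by the definition of $\G_e$ and $\G_d$, the anchor $(g^{la})'$ equals $g^{l_aa}$, where $g^{l_a}=g^c$ sits as an entry of $g^l$. If $l_a=l^2$, then $(g^l)^l=g^c$ and $(g^l)^{la}=(g^{la})'$, so $(g^l)^L=g^{la}g^c(g^{la})'$ and the target expression is exactly $g^l\cdot(g^l)^L\cdot g^l$, which is $\ap g^l$ by part (i) applied to $g^l$. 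If instead $l_a=lr$, the same computation identifies the expression with $g^l\cdot(g^l)^R\cdot g^l\ap g^l$. The edge case $\up(g^l)=1$ forces $\up(g)=2$, $g^l=g_{xx'}$, $g^c=1$, and $g^{la}\in\{1,x\}$, and reduces to verifying $g_{xx'}g^{la}(g^{la})'g_{xx'}\ap g_{xx'}$ case-by-case using $xx'\ap g_{xx'}$. The dual identity $g^Rg^cg^R\ap g^R$ is symmetric. I expect this matching between the anchor data of $g$ and the $L$/$R$-word of $g^l$ to be the main technical obstacle, because it requires unpacking the recursive definition of the 5-tuples and recognising which of the two sides of $g^l$ hosts $g^c$.

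For (v), squaring collapses via (iv): $(g^Lg^c)^2=(g^Lg^cg^L)g^c\ap g^Lg^c$, and the other three are identical. Finally, for (vi) I would verify the four conditions defining $S([g^Rg^c],[g^cg^L])$. Idempotency of $[g]$ is $\rho_e$; $[g^cg^L][g]=[g]$ and $[g][g^Rg^c]=[g]$ are the first two relations of $\rho_s$. The remaining compatibility condition $[g^Rg^c][g][g^cg^L]=[g^Rg^c][g^cg^L]$ reduces, by applying $g^cg\ap g\ap gg^c$ from (i) on the left-hand side and $g^cg^c\ap g^c$ on the right, to $g^Rgg^L\ap g^Rg^cg^L$. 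Expanding, this reads $(g^{ra})'(g^r\!\cdot\! g\!\cdot\! g^l)g^{la}\ap(g^{ra})'(g^r\!\cdot\! g^c\!\cdot\! g^l)g^{la}$, which is precisely the third relation of $\rho_s$ sandwiched by $(g^{ra})'$ on the left and $g^{la}$ on the right.
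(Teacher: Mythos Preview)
Your proposal is correct and follows essentially the same route as the paper: (i) via the $\rho_s$ identities $g^cg^Lg\approx g$, $gg^Rg^c\approx g$ plus idempotency of $g^c$; (iv) by recognising $g^lg^{la}g^c(g^{la})'g^l$ as $g^l(g^l)^Lg^l$ or $g^l(g^l)^Rg^l$ according to $l_a$ and invoking (i) for $g^l$; and (vi) by reducing the sandwich compatibility to $g^Rgg^L\approx g^Rg^cg^L$, which is the third $\rho_s$ relation flanked by $(g^{ra})'$ and $g^{la}$. Your explicit treatment of the edge case $\up(g)=2$ in (iv) is in fact more careful than the paper, which tacitly applies (i) to $g^l=g_{xx'}$ without separating out that case.
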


\begin{proof}
Note that $gg^c\ap gg^Rg^cg^c\ap gg^Rg^c\ap g$ and 
$gg^Lg\ap gg^cg^Lg\ap g^2\ap g$ 
by definition of $\rho_e$ and $\rho_s$. The proof of $(i)$ is now complete by duality. Then $(ii)$ follow obviously from $(i)$. Further, $[g^Lg] $ and $[g^Rg]$ also belong to $\Lcc_{[g]}$. Then, since $(g^{la})'g^l\cdot g=g^Lg$ and $(g^{ra})'g^r\cdot g=g^Rg$, we must have both $[g^l\cdot g]$ and $[g^r\cdot g]$ in $\Lcc_{[g]}$ too. Thus $(iii)$ holds by duality.

Note now that
$$\begin{array}{ll}
	g^Lg^cg^L\hspace*{-.2cm}&=(g^{la})'g^lg^{la} g^c(g^{la})' g^lg^{la}=(g^{la})'g^l(g^{l_aa})'g^cg^{l_aa}g^lg^{la}=\\ [.3cm]
	&=\left\{\begin{array}{ll}
		(g^{la})'g^l(g^l)^Lg^lg^{la} &\mbox{ if } l_a=l^2 \\ [.2cm]
		(g^{la})'g^l(g^l)^Rg^lg^{la} &\mbox{ if } l_a=lr \\ [.2cm]
	\end{array}\right.
	\end{array}$$
Applying statement $(i)$ to $g^l$ we get $g^Lg^cg^L\ap (g^{la})'g^lg^{la}=g^L$. Similarly, we show that $g^Rg^cg^R\ap g^R$ and $(iv)$ is proved. Then $(v)$ is an obvious consequence of $(iv)$.
 
Finally, to prove $(vi)$, we already know that $[g]$, $[g^Rg^c]$ and $[g^cg^L]$ are idempotents of $\foo$ such that $g^cg^Lg\ap g$ and $gg^Rg^c\ap g$. So, we only need to prove that $g^Rg^cgg^cg^L\ap g^Rg^cg^L$. But
$$g^Rg^cgg^cg^L\ap g^Rgg^L= (g^{ra})'g^r\cdot g\cdot g^lg^{la}\ap (g^{ra})'g^r\cdot g^c\cdot g^lg^{la} = g^Rg^cg^L\,.$$
Hence, we proved that $[g]\in S([g^Rg^c],[g^cg^L])$.
\end{proof}

\subsection{Special words}
 
To facilitate the organization of this paper, if nothing is said in contrary, we will use $g$ and $h$ (with possible indices) to refer to elements of $\G'$, $a$ and $b$ (with possible indices) to refer to anchors, and $u$ and $v$ (also with possible indices) to refer to words from $\Gp$. Next, we list some general terms and notations that we will use throughout this paper. These terms identify special words from $\Gp$ and their characteristics that will be important to us. We use the same terminology of \cite{LO22} and the notions are very much alike as the reader can verify. The main difference, as mentioned already, is on what we consider now a landscape.

\begin{description}
\item[Landscape] alternating word $u=g_0a_1g_1\cdots a_ng_n\in \Gp$ ($n\geq 0$) of letters $g_j\in\G'$ and anchors $a_j\in A$ such that all triplet subwords $g_{i-1}a_ig_i$ are anchored. We denote by $\lx$ the set of all landscapes of $\Gp$. Note also that single letters $g\in \G'$ are particular landscapes; whence $\G'\subseteq \lx$. We make notice that some of the terms below refer to properties only of the letters $g_i$ of a landscape (and not to its anchors $a_i$).
\item[Ridge] letter $g_i$ of a landscape $u=g_0a_1g_1\cdots a_ng_n$ such that $\up(g_{i-1})=\up(g_{i+1})=\up(g_i)-1$. Hence, a ridge is never the first nor the last letter of a landscape. 
\item[Peak] highest ridge of a landscape $u$. Note that $u$ can have several peaks, but all of them have the same height. If $u$ has only one peak, we denote it by $\ka(u)$. The height $\up(u)$ of the landscape $u$ is the maximum between the height of its peaks, the height of $\si(u)$, and the height of $\tau(u)$. Note that this definition of height agrees with the notion of height for elements of $\G$ introduced earlier.
\item[River] letter $g_i$ of a landscape $u=g_0a_1g_1\cdots a_ng_n$ such that $\up(g_{i-1})=\up(g_{i+1})=\up(g_i)+1$. As with ridges, rivers are never endpoints of landscapes.
\item[Hill] landscape $u=g_0a_1g_1\cdots a_ng_n$ with $n\geq 1$ such that either $g_{i-1}\in\{g_i^l,g_i^r\}$ for all $1\leq i\leq n$, or $g_i\in\{g_{i-1}^l,g_{i-1}^r\}$ for all $1\leq i\leq n$. In the former case we have an \emph{uphill} since $\up(g_i)=\up(g_{i-1})+1$, while on the latter case we have a \emph{downhill} since $\up(g_{i})=\up(g_{i-1})-1$. We denote by $\lo$, $\lop$ and $\lom$ the set of all hills, uphills and downhills, respectively. Further, $\ltp$ will denote the set of all landscapes composed by an uphill followed by a downhill.
\item[Valley] landscape composed by a downhill followed by an uphill. Thus, a valley has always one (and only one) river at its lower height letter. We denote by $\ltm$ the set of all valleys of $\Gp$, and by $\lt$ the set $\ltp\cup\ltm$. 
\item[Mountain range] landscape $u$ with $\si(u)=\tau(u)=1$. Thus, the height of a nontrivial mountain range is the height of its peaks. Note that all mountain ranges have length $4k+1$ for some $k\in\mathbb{N}_0$, and $u=1$ is the only mountain range of length $1$. If $u\neq 1$, we denote by $\la_l(u)$ the maximal prefix of $u$ which is also an uphill, and by $\la_r(u)$ the maximal suffix of $u$ which is also a downhill. We call $\la_l(u)$ and $\la_r(u)$ the \emph{left hill} and the \emph{right hill} of $u$, respectively. For technical reasons, we define also $\la_l(1)=\la_r(1)=1$. We denote by $\mrx$ the set of all mountain ranges of $\Gp$.
\item[Mountain] mountain range $u$ with no rivers. Thus, it is either the trivial mountain range $u=1$, or it is composed by the left hill $\la_l(u)$ followed by the right hill $\la_r(u)$. Nontrivial mountains have only one ridge. We denote by $\mxm$ and $\mx$ the sets of all mountains and nontrivial mountains, respectively. So $\mx=\mrx\cap \ltp$.
\end{description}

We shall use the previous terminology also to refer to subwords of a given word $u\in \Gp$. For example, a hill of $u\in\lx$ is a subword of $u$ that is also a hill. A hill of $u\in\lx$ is called maximal if it is not properly contained in another hill of $u$, while a valley of $u$ is called maximal if it is not properly contained in another valley of $u$. Thus, each nontrivial mountain range is uniquely decomposable into its left hill, followed by a possible empty sequence of maximal valleys between its consecutive ridges, and ending with its right hill. However, we must be careful since $u_1u_2$ is not a landscape if $u_1$ and $u_2$ are two landscapes such that $g=\tau(u_1)=\si(u_2)$: it appears a double $gg$ at the junction of $u_1$ with $u_2$. Therefore, we will use the notation $u_1*u_2$ to represent the landscape obtained from $u_1u_2$ by replacing $gg$ with $g$. Note that $u_1u_2\ap u_1*u_2$ since $[g]$ is an idempotent of $\foo$.

We will represent landscapes as line graphs where the anchors label the edges and the other letters label the vertices. We will include information about the height of the letters from $\G'$ in this line graphs as follows: we draw them such that all vertices labeled by letters with the same height belong to the same imaginary horizontal line; and labels of vertices in higher imaginary horizontal lines have higher height. We try to illustrate this idea in Figure \ref{fig1}. For example, looking carefully to that picture, we can conclude that $(g_1,a_2)=(g_2^s,g_2^{sa})$ and $(a_7,g_7)=((g_6^{ta})',g_6^t)$ for some $s,t\in\{l,r\}$. Using these line graphs, we can see that the terminology introduced above has a natural interpretation. We draw again the line graph of Figure \ref{fig1} in Figure \ref{fig2} (omitting some unnecessary information) to illustrate some of those terms.

\begin{figure}[ht]
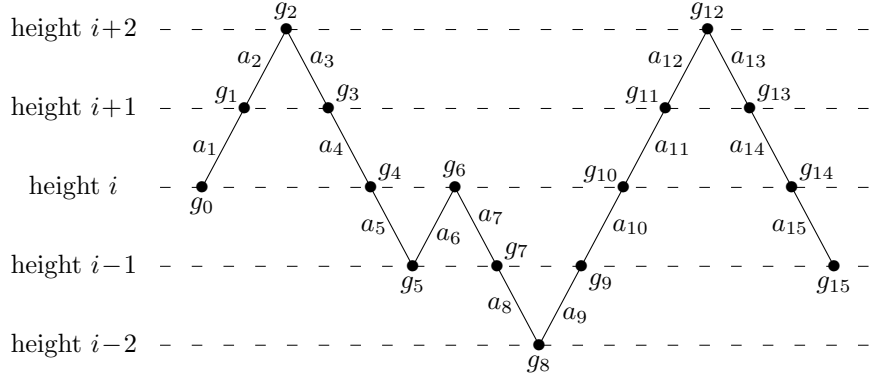
 
$$\tikz[scale=.7]{
\coordinate (1) at (1.8,3);
\coordinate (2) at (2.6,4.5);
\coordinate (3) at (3.4,6);
\coordinate (4) at (4.2,4.5);
\coordinate (5) at (5,3);
\coordinate (6) at (5.8,1.5);
\coordinate (7) at (6.6,3);
\coordinate (8) at (7.4,1.5);
\coordinate (9) at (8.2,0);
\coordinate (10) at (9,1.5);
\coordinate (11) at (9.8,3);
\coordinate (12) at (10.6,4.5);
\coordinate (13) at (11.4,6);
\coordinate (14) at (12.2,4.5);
\coordinate (15) at (13,3);
\coordinate (16) at (13.8,1.5);
\coordinate (17) at (1,0);
\coordinate (18) at (1,1.5);
\coordinate (19) at (1,3);
\coordinate (20) at (1,4.5);
\coordinate (21) at (1,6);
\coordinate (22) at (14.5,0);
\coordinate (23) at (14.5,1.5);
\coordinate (24) at (14.5,3);
\coordinate (25) at (14.5,4.5);
\coordinate (26) at (14.5,6);
\draw (1) node {$\bullet$} node [below] {\small $g_0$};
\draw (2) node {$\bullet$} node [above left=-2pt] {\small $g_1$};
\draw (3) node {$\bullet$} node [above] {\small $g_2$};
\draw (4) node {$\bullet$} node [above right=-1pt] {\small $g_3$};
\draw (5) node {$\bullet$} node [above right=-1pt] {\small $g_4$};
\draw (6) node {$\bullet$} node [below] {\small $g_5$};
\draw (7) node {$\bullet$} node [above] {\small $g_6$};
\draw (8) node {$\bullet$} node [above right=-1pt] {\small $g_7$};		
\draw (9) node {$\bullet$} node [below] {\small $g_8$};		
\draw (10) node {$\bullet$} node [below right=-1pt] {\small $g_9$};
\draw (11) node {$\bullet$} node [above left=-2pt] {\small $g_{10}$};
\draw (12) node {$\bullet$} node [above left=-2pt] {\small $g_{11}$};
\draw (13) node {$\bullet$} node [above] {\small $g_{12}$};	
\draw (14) node {$\bullet$} node [above right=-1pt] {\small $g_{13}$};
\draw (15) node {$\bullet$} node [above right=-1pt] {\small $g_{14}$};		
\draw (16) node {$\bullet$} node [below] {\small $g_{15}$};						
\draw (17) node [left=5pt] {\small height $i\!-\!2$};
\draw (18) node [left=5pt] {\small height $i\!-\!1$};
\draw (19) node [left=12pt] {\small height $i$};
\draw (20) node [left=5pt] {\small height $i\!+\!1$};
\draw (21) node [left=5pt] {\small height $i\!+\!2$};		
\draw[very thin, dashed, dash pattern=on 4pt off 8pt] (17)--(22);
\draw[very thin, dashed, dash pattern=on 4pt off 8pt] (18)--(23);
\draw[very thin, dashed, dash pattern=on 4pt off 8pt] (19)--(24);
\draw[very thin, dashed, dash pattern=on 4pt off 8pt] (20)--(25);
\draw[very thin, dashed, dash pattern=on 4pt off 8pt] (21)--(26);
\draw (1) to node[left=-2pt] {\small$a_1$} (2);
\draw (2) to node[above left=-3pt] {\small$a_2$} (3);
\draw (3) to node[above right=-3pt] {\small$a_3$} (4);
\draw (4) to node[left=-2pt] {\small$a_4$} (5);
\draw (5) to node[left=-2pt] {\small$a_5$} (6);
\draw (6) to node[below right=-3pt] {\small$a_6$} (7);
\draw (7) to node[above right=-3pt] {\small$a_7$} (8);
\draw (8) to node[left=-2pt] {\small$a_8$} (9);
\draw (9) to node[below right=-3pt] {\small$a_9$} (10);
\draw (10) to node[right] {\small$a_{10}$} (11);
\draw (11) to node[right] {\small$a_{11}$} (12);
\draw (12) to node[above left=-3pt] {\small$a_{12}$} (13);
\draw (13) to node[above right=-3pt] {\small$a_{13}$} (14);
\draw (14) to node[left=-2pt] {\small$a_{14}$} (15);	
\draw (15) to node[left=-2pt] {\small$a_{15}$} (16);				
}$$
\caption{Illustration of a landscape in a line graph.}\label{fig1}
\end{figure}

\begin{figure}[ht]
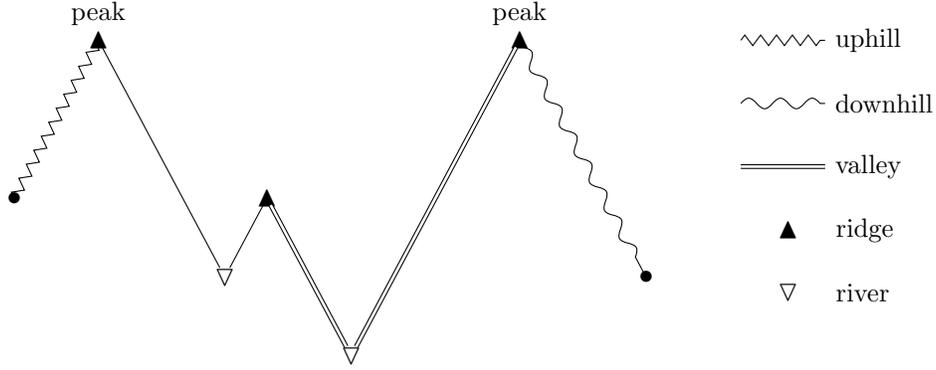
 
$$\tikz[scale=.7]{
\coordinate (1) at (1.8,3);
\coordinate (2) at (3.4,6);
\coordinate (3) at (5.8,1.5);
\coordinate (4) at (6.6,3);
\coordinate (5) at (8.2,0);
\coordinate (6) at (11.4,6);
\coordinate (7) at (13.8,1.5);
\draw (1) node {$\bullet$};
\draw (2) node {$\blacktriangle$} node[above=2pt] {\small peak};
\draw (3) node {$\triangledown$};
\draw (4) node {$\blacktriangle$};
\draw (5) node {$\triangledown$};	
\draw (6) node {$\blacktriangle$}	node[above=2pt] {\small peak};
\draw (7) node {$\bullet$};	
\draw[snake,segment amplitude=2pt,segment length=6pt,gap after snake=1pt] (1)--(2);
\draw[shorten >=4pt, shorten <=2pt] (2)--(3);
\draw[shorten >=2pt, shorten <=4pt] (3)--(4);
\draw[double,double distance=1pt,shorten >=4pt, shorten <=3pt] (4)--(5);
\draw[double,double distance=1pt,shorten >=3pt, shorten <=4pt] (5)--(6);
\draw[snake=coil,segment aspect=0,segment amplitude=2pt,segment length=12pt,gap before snake=3pt] (6)--(7);			
\draw[snake,segment amplitude=2pt,segment length=6pt] (15.6,6)--(17.2,6);
\draw (17.2,6) node[right] {\small uphill};
\draw[snake=coil,segment aspect=0,segment amplitude=2pt,segment length=12pt] (15.6,4.8)--(17.2,4.8);
\draw (17.2,4.8) node[right] {\small downhill};
\draw[double,double distance=1pt] (15.6,3.6)--(17.2,3.6);
\draw (17.2,3.6) node[right] {\small valley};
\draw (16.5,2.4) node {$\blacktriangle$} node[right=.5cm] {\small ridge};
\draw (16.5,1.2) node {$\triangledown$} node[right=.5cm] {\small river};
}$$
\caption{Illustration of some terminology.}\label{fig2}
\end{figure}

Next, we associate a mountain to each element of $\G$. We begin by observing that $11g_{xx'}x1$, $1x'g_{xx'}11$ and $11g_{xx'}11$ are mountains. It may seem strange the appearance of double ones ($11$). This is so because one of them is consider as a letter from $\G'$ while the other is an anchor. Of course, since we know that $1$ will represent the identity element of $\foo$, we could have replaced these double ones by a single one or even omit them. However, this would mean that we would have to treat this particular letter different from the other letters from $\G'$. For technical reason it is better not to do so, and consider the letter $1$ from $\G'$ as any other letter. Therefore, we will see very often these double ones (or even longer sequences of ones) in our landscapes.

We define $\be_1(x)=11g_{xx'}x1$, $\be_1(x')=1x'g_{xx'}11$ and $\be_1(g_{xx'})=11g_{xx'}11$ since $x\ap 11g_{xx'}x1$, $x'\ap 1x'g_{xx'}11$ and $g_{xx'}\ap 11g_{xx'}11$. We define also $\be_1(1)=1$, the trivial mountain. The definition of $\be_1(g)$ for a 5-tuple $g$ of height greater than 1 is more complex. These words are long and, therefore, we will define first their left and right hills.

Let $g\in\G^5$ such that $\up(g)=2n$ or $\up(g)=2n+1$ for some $n\in\mathbb{N}$. Set
$$\be_{1,l}(g)=\underbracket{g^{c^n}g^{c^{n-1}l_aa}g^{c^{n-1}l}g^{c^{n-1}la}}g^{c^{n-1}}\!\cdots \underbracket{g^{c^2}g^{cl_aa}g^{cl}g^{cla}}\underbracket{g^cg^{l_aa}g^lg^{la}}g$$
and 
$$\be_{1,r}(g)=g\underbracket{(g^{ra})'g^r(g^{r_aa})'g^c}(g^{cra})'g^{cr}\!\cdots g^{c^{n-1}}\underbracket{(g^{c^{n-1}ra})'g^{c^{n-1}r}(g^{c^{n-1}r_aa})'g^{c^n}}\!.$$
The `brackets' under these words highlight the four element blocks used as patterns in their construction. To build $\be_{1,l}(g)$, we add to the left of each $g^{c^i}$ the block $g^{c^{i+1}}g^{c^il_aa}g^{c^il}g^{c^ila}$; while, to build $\be_{1,r}(g)$, we add to the right of each $g^{c^i}$ the block $(g^{c^ira})'g^{c^ir}(g^{c^ir_aa})'g^{c^{i+1}}$.
Consequently, $\be_{1,l}(g)$ is an uphill from $g^{c^n}$ to $g$, while $\be_{1,r}(g)$ is a downhill from $g$ to $g^{c^n}$. Furthermore, $g^{c^n}=1$ if $\up(g)=2n$, and $g^{c^n}=g_{xx'}$ if $\up(g)=2n+1$. 

We now define
$$\be_1(g)=\left\{\begin{array}{ll}
\be_{1,l}(g)*\be_{1,r}(g) & \mbox{ if } \up(g)=2n \\ [.2cm]
11\be_{1,l}(g)*\be_{1,r}(g)11 & \mbox{ if } \up(g)=2n+1
\end{array}\right.$$
Clearly, $\be_1(g)$ is a mountain if $\up(g)=2n$. If $\up(g)=2n+1$, then $\be_1(g)$ is also a mountain since both $11g_{xx'}$ and $g_{xx'}11$ are hills. We depict the mountain $\be_1(g)$ in Figure \ref{fig3} to help visualizing it. We denote by $\la_l(g)$ and $\la_r(g)$ the left hill and the right hill of $\be_1(g)$, respectively, for each $g\in\G^5$. Hence $\be_1(g)=\la_l(g)*\la_r(g)$ for all $g\in\G^5$. Note also that $\la_l(g)=\be_{1,l}(g)$ and $\la_r(g)=\be_{1,r}(g)$ if $\up(g)$ is even; while $\la_l(g)=11\be_{1,l}(g)$ and $\la_r(g)=\be_{1,r}(g)11$ if $\up(g)$ is odd.

\begin{figure}[ht]
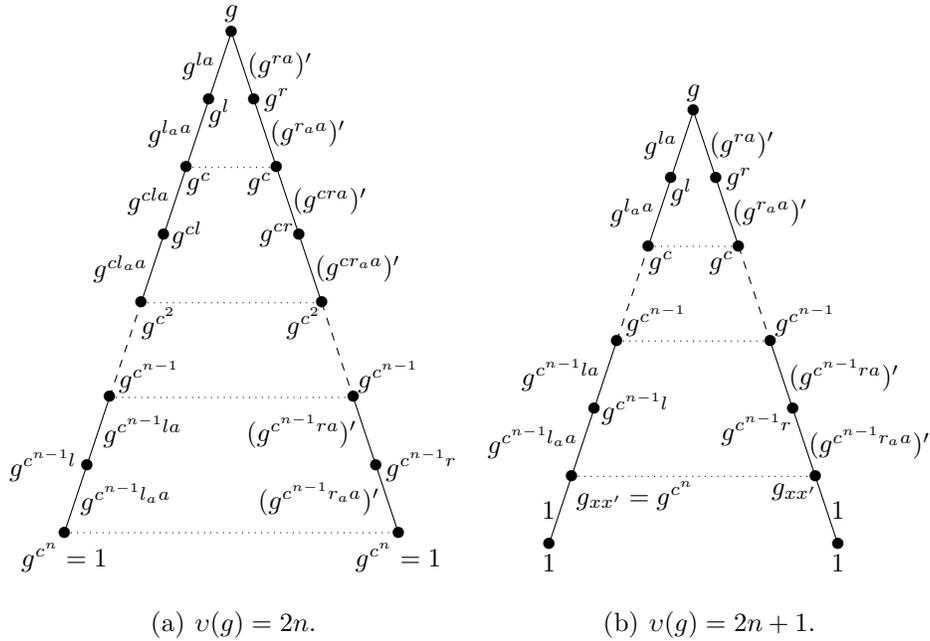

\begin{subfigure}[b]{.49\textwidth}
\centering
$$\tikz[scale=.6]{
\coordinate (1) at (0,0);
\coordinate (2) at (.5,1.5);
\coordinate (3) at (1.2,3.6);
\coordinate (4) at (1.7,5.1);
\coordinate (5) at (2.2,6.6);
\coordinate (6) at (2.7,8.1);
\coordinate (7) at (3.2,9.6);
\coordinate (8) at (3.7,8.1);
\coordinate (9) at (4.2,6.6);
\coordinate (10) at (4.7,5.1);
\coordinate (11) at (5.2,3.6);
\coordinate (12) at (5.9,1.5);
\coordinate (13) at (6.4,0);
\coordinate (14) at (-0.5,-1.5);
\coordinate (15) at (6.9,-1.5);		
\draw (14) node {$\bullet$} node [below] {\small $g^{c^n}=1$};		
\draw (1) node {$\bullet$} node [left] {\small $g^{c^{n-1}l}$};
\draw (2) node {$\bullet$} node [above right=-2pt] {\small $\,g^{c^{n-1}}$};		
\draw (3) node {$\bullet$} node [below right=-2pt] {\small $\!g^{c^2}$};
\draw (4) node {$\bullet$} node [right=-1pt] {\small $g^{cl}$};
\draw (5) node {$\bullet$} node [below right=-2pt] {\small $\!g^c$};		
\draw (6) node {$\bullet$} node [below right=-4pt] {\small $g^l$};		
\draw (7) node {$\bullet$} node [above] {\small $g$};
\draw (8) node {$\bullet$} node [right] {\small $g^r$};		
\draw (9) node {$\bullet$} node [below left=-2pt] {\small $g^c$};
\draw (10) node {$\bullet$} node [left=-2pt] {\small $g^{cr}$};
\draw (11) node {$\bullet$} node [below left=-2pt] {\small $g^{c^2}\!$};		
\draw (12) node {$\bullet$} node [above right=-2pt] {\small $g^{c^{n-1}}$};			
\draw (13) node {$\bullet$} node [right] {\small $g^{c^{n-1}r}$};
\draw (15) node {$\bullet$} node [below] {\small $g^{c^n}=1$};		
\draw (14) to node[right=-2pt] {\small$g^{c^{n-1}l_aa}$} (1);
\draw (1) to node[right=-2pt] {\small$g^{c^{n-1}la}$} (2);		
\draw[dashed] (2)--(3);
\draw (3) to node[left=-2pt] {\small$g^{cl_aa}$} (4);
\draw (4) to node[left=-2pt] {\small$g^{cla}$} (5);
\draw (5) to node[left=-2pt] {\small$g^{l_aa}$} (6);
\draw (6) to node[left=-2pt] {\small$g^{la}$} (7);
\draw (7) to node[right=-2pt] {\small$(g^{ra})'$} (8);
\draw (8) to node[right=-2pt] {\small$(g^{r_aa})'$} (9);
\draw (9) to node[right=-2pt] {\small$(g^{cra})'$} (10);
\draw (10) to node[right=-2pt] {\small$(g^{cr_aa})'$} (11);
\draw[dashed] (11)--(12);
\draw (12) to node[left=-1pt] {\small$(g^{c^{n-1}ra})'$} (13);
\draw (13) to node[left=-1pt] {\small$(g^{c^{n-1}r_aa})'$} (15);				
\draw[dotted] (14)--(15);
\draw[dotted] (3)--(11);
\draw[dotted] (5)--(9);		
\draw[dotted] (2)--(12);				
}$$	
\caption{$\up(g)=2n$.}
\end{subfigure}
\begin{subfigure}[b]{.49\textwidth}
\centering
$$\tikz[scale=.6]{
\coordinate (1) at (0,0);			
\coordinate (2) at (0.5,1.5);
\coordinate (3) at (1,3);
\coordinate (4) at (1.5,4.5);
\coordinate (5) at (2.2,6.6);
\coordinate (6) at (2.7,8.1);
\coordinate (7) at (3.2,9.6);
\coordinate (8) at (3.7,8.1);
\coordinate (9) at (4.2,6.6);
\coordinate (10) at (4.9,4.5);
\coordinate (11) at (5.4,3);
\coordinate (12) at (5.9,1.5);
\coordinate (13) at (6.4,0);		
\draw (1) node {$\bullet$} node [below] {\small $1$};
\draw (2) node {$\bullet$} node [below right=-2pt] {\small $g_{xx'}=g^{c^n}$};		
\draw (3) node {$\bullet$} node [right=-1pt] {\small $g^{c^{n-1}l}$};
\draw (4) node {$\bullet$} node [above right=-2pt] {\small $\,g^{c^{n-1}}$};		
\draw (5) node {$\bullet$} node [below right=-2pt] {\small $\!g^c$};		
\draw (6) node {$\bullet$} node [below right=-4pt] {\small $g^l$};		
\draw (7) node {$\bullet$} node [above] {\small $g$};
\draw (8) node {$\bullet$} node [right] {\small $g^r$};		
\draw (9) node {$\bullet$} node [below left=-2pt] {\small $g^c$};
\draw (10) node {$\bullet$} node [above right=-2pt] {\small $g^{c^{n-1}}$};			
\draw (11) node {$\bullet$} node [below left=-2pt] {\small $g^{c^{n-1}r}\!$};
\draw (12) node {$\bullet$} node [below left=-1pt] {\small $g_{xx'}\!\!$};
\draw (13) node {$\bullet$} node [below] {\small $1$};		
\draw (1) to node[left=-2pt] {\small$1$} (2);
\draw (2) to node[left=-2pt] {\small$g^{c^{n-1}l_aa}$} (3);
\draw (3) to node[left=-2pt] {\small$g^{c^{n-1}la}$} (4);		
\draw[dashed] (4)--(5);
\draw (5) to node[left=-2pt] {\small$g^{l_aa}$} (6);
\draw (6) to node[left=-2pt] {\small$g^{la}$} (7);
\draw (7) to node[right=-2pt] {\small$(g^{ra})'$} (8);
\draw (8) to node[right=-2pt] {\small$(g^{r_aa})'$} (9);
\draw[dashed] (9)--(10);
\draw (10) to node[right=-1pt] {\small$(g^{c^{n-1}ra})'$} (11);
\draw (11) to node[right=-2pt] {\small$(g^{c^{n-1}r_aa})'$} (12);	
\draw (12) to node[right=-2pt] {\small$1$} (13);			
\draw[dotted] (2)--(12);
\draw[dotted] (5)--(9);		
\draw[dotted] (4)--(10);				
}$$	
\caption{$\up(g)=2n+1$.}
\end{subfigure}
\caption{Illustration of the mountain $\be_1(g)$.}\label{fig3}
\end{figure}

Let $u=h_0\cdots h_k$ with $h_i\in\G$ for $0\leq i\leq k$. Extend $\be_1$ to $u$ by setting
$$\be_1(u)=\be_1(h_0)*\cdots*\be_1(h_k)\,.$$ 
Then $\be_1(u)\in \mrx$. In the following result we prove that $u\ap\be_1(u)$ for all words $u\in\Gp$.

\begin{lem}\label{m1}
$\be_1(u)\ap u$ for all $u\in \Gp$.
\end{lem}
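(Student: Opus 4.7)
The plan is to induct on the height of single letters to show $\be_1(g)\ap g$ for every $g\in\G$, and then extend to arbitrary words via the congruence property of $\ap$ together with the identity $u_1u_2\ap u_1*u_2$ already noted in the paper.

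For the letters $g\in A\cup\{g_{xx'}\}$, a direct computation using $\rho_e$ suffices. For instance, $\be_1(x)=11g_{xx'}x1\ap 11xx'x1\ap xx'x\ap x$, and the cases $x'$, $g_{xx'}$ and $1$ are analogous or immediate.

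The substantive step is to prove $\be_{1,l}(g)\ap g$ for every $g\in\G^5$, which I would do by induction on $n$, where $\up(g)\in\{2n,2n+1\}$. For $n=1$ the whole word $\be_{1,l}(g)$ is just $g^cg^{l_aa}g^lg^{la}g$, which equals $g^cg^Lg$ after using $g^{l_aa}=(g^{la})'$; hence $\be_{1,l}(g)\ap g$ directly from $\rho_s$. The induction hinges on the telescoping identity
$$\be_{1,l}(g)=\be_{1,l}(g^c)\,g^{l_aa}g^lg^{la}g\,,$$
obtained by setting $h=g^c$ and observing that $h^{c^i}=g^{c^{i+1}}$ throughout, so the formula defining $\be_{1,l}(h)$ coincides with the prefix of $\be_{1,l}(g)$ ending at $g^c$. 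The inductive hypothesis $\be_{1,l}(g^c)\ap g^c$ then yields $\be_{1,l}(g)\ap g^cg^Lg\ap g$ via $\rho_s$. Dualising produces $\be_{1,r}(g)\ap g$.

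To finish the single-letter case, I split on the parity of $\up(g)$. If $\up(g)$ is even, $\be_1(g)=\be_{1,l}(g)*\be_{1,r}(g)\ap\be_{1,l}(g)\be_{1,r}(g)\ap g^2\ap g$; if $\up(g)$ is odd, $\be_1(g)=11\be_{1,l}(g)*\be_{1,r}(g)11\ap 11g11\ap g$ by $\rho_e$. For a general word $u=h_0\cdots h_k\in\Gp$, each $\be_1(h_i)$ is a mountain starting and ending with $1$, so $\be_1(u)=\be_1(h_0)*\cdots*\be_1(h_k)\ap\be_1(h_0)\cdots\be_1(h_k)\ap h_0\cdots h_k=u$. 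I expect the main obstacle to be pedantic in nature: verifying that the $l_a$-decoration implicit in $g^{c^il_aa}$, which depends on the particular 5-tuple $g^{c^i}$ being considered, is exactly the one that makes the telescoping identity hold at every level. Once that is granted, each step is a direct instance of $\rho_s$ or $\rho_e$.
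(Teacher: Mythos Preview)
Your proof is correct and follows essentially the same approach as the paper's. The paper also reduces to single letters, handles the base cases in $A\cup\{g_{xx'}\}$ directly, and then observes that each four-letter block $g^{c^i}g^{c^{i-1}l_aa}g^{c^{i-1}l}g^{c^{i-1}la}g^{c^{i-1}}$ equals $(g^{c^{i-1}})^c(g^{c^{i-1}})^Lg^{c^{i-1}}$ and hence collapses to $g^{c^{i-1}}$ via $\rho_s$; your telescoping identity $\be_{1,l}(g)=\be_{1,l}(g^c)\,g^{l_aa}g^lg^{la}g$ packages this same block-by-block collapse as a formal induction on $n$, and your parity split at the end is exactly what the paper absorbs into the identity $\la_l(g)\ap\be_{1,l}(g)$.
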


\begin{proof}
Note that we just need to prove that $\be_1(h)\ap h$ for all $h\in\G$. Indeed, if $\be_1(h)\ap h$ for all $h\in\G$, then
$$\be_1(u)\ap\be_1(h_0)\be_1(h_1)\cdots\be_1(h_k)\ap h_0h_1\cdots h_k=u\,.$$
We have already observed that $\be_1(a)\ap a$ for all $a\in A$. Clearly 
$$g^{c^i}(g^{c^{i-1}})^{l_aa}g^{c^{i-1}l}g^{c^{i-1}la}g^{c^{i-1}}=(g^{c^{i-1}})^c\, (g^{c^{i-1}})^L\, g^{c^{i-1}}\ap g^{c^{i-1}}$$
by definition of $\rho_s$. Since we have also $11g_{xx'}\ap g_{xx'}$, we conclude that $\la_l(g)\ap\be_{1,l}(g)\ap g$ for all $g\in\G^5$. Dually, we can conclude also that $\la_r(g)\ap\be_{1,r}(g)\ap g$. Consequently
$$\be_1(g)=\la_l(g)*\la_r(g)\ap \la_l(g)\la_r(g)\ap g^2\ap g\,.$$
We have proved that $\be_1(h)\ap h$ for all $h\in\G$ as wanted.
\end{proof}

\subsection{Uplifting of rivers}

Let $u=g_0a_1g_1\cdots a_ng_n$ be a landscape and assume that $g_i$ is a river of $u$. Then $g_{i-1}a_ig_ia_{i+1}g_{i+1}$ is a subword of $u$ such that $g_{i-1}a_ig_i$ is a right anchored triplet and $g_ia_{i+1}g_{i+1}$ is a left anchored triplet. In other words,
$$(g_i,a_i)=\big(g_{i-1}^s,(g_{i-1}^{sa})'\big)\quad\mbox{ and }\quad (g_i,a_{i+1})= (g_{i+1}^t,g_{i+1}^{ta})\,,$$
for some $s,t\in\{l,r\}$. 

Let $h_i$ be the 5-tuple $(g_{i+1},a_{i+1}',g_i,a_i,g_{i-1})$. If $g_{i-1}\neq g_{i+1}$, then $h_i\in\G_d\,$; and if $g_{i-1}=g_{i+1}$ and $a_i\neq a_{i+1}'$, then $h_i\in\G_e$. However, $h_i\not\in\G^5$ if $g_{i-1}=g_{i+1}$ and $a_i=a_{i+1}'$. So, set
$$v=\left\{\begin{array}{ll}
g_0a_1\cdots g_{i-1}a_{i+2}g_{i+2}\cdots g_n &\mbox{ if } g_{i-1}=g_{i+1} \mbox{ and } a_i=a_{i+1}\,,\\ [.2cm]
g_0\cdots g_{i-1}a_ih_ia_{i+1}g_{i+1}\cdots g_n\quad &\mbox{ otherwise}\,. 
\end{array}\right.$$
If $h_i\in\G^5$, then the triplet $g_{i-1}a_ih_i$ is left anchored, while the triplet $h_ia_{i+1}g_{i+1}$ is right anchored; whence $v$ is also landscape. If $h_i\not\in\G^5$, then $g_{i-1}a_{i+2}g_{i+2}=g_{i+1}a_{i+2}g_{i+2}$ is an anchored triplet and again $v$ is a landscape too. We have shown that $v$ is always a landscape. We say that $v$ is obtained from $u$ by \emph{uplifting a river} and write $u\to v$ (or $u\xrightarrow{g_i} v$ if one needs to identify the river uplifted). Thus $\to$ is a binary relation defined on the set $\lx$ of all landscapes. We illustrate the two distinct instances of this operation in Figure \ref{fig4}.

\begin{figure}[h]
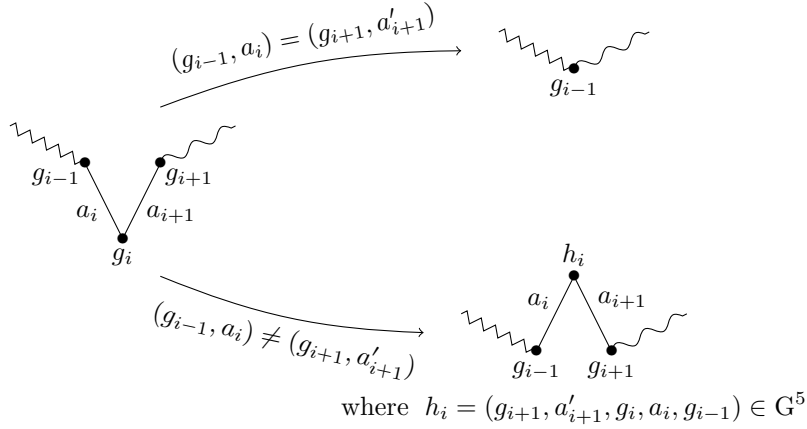

$$\tikz[scale=.5]{
\coordinate (1) at (3,3);
\coordinate (2) at (2,5);
\coordinate (3) at (0,6);
\coordinate (4) at (4,5);
\coordinate (5) at (6,6);
\draw (1) node {\small$\bullet$} node [below] {\small$g_i$};
\draw (2) node {\small$\bullet$} node [below left=-2pt] {\small$g_{i-1}$};
\draw (4) node {\small$\bullet$} node [below right=-2pt] {\small$g_{i+1}$};
\draw (2) to node[below left=-2pt] {\small$a_i$} (1);
\draw (1) to node[below right=-2pt] {\small$a_{i+1}$} (4);
\draw[snake,segment amplitude=2pt,segment length=6pt] (3)--(2);
\draw[snake=coil,segment aspect=0,segment amplitude=2pt,segment length=12pt] (4)--(5);
\coordinate (6) at (15,7.5);
\coordinate (7) at (13,8.5);
\coordinate (8) at (17,8.5);
\draw (6) node {\small$\bullet$} node [below] {\small$g_{i-1}$};
\draw[snake,segment amplitude=2pt,segment length=6pt] (7)--(6);
\draw[snake=coil,segment aspect=0,segment amplitude=2pt,segment length=12pt] (6)--(8);
\draw[->, bend left=10] (4,6.5) to node [sloped, above] {\small$(g_{i-1},a_i)=(g_{i+1},a_{i+1}')$} (12,8);
\coordinate (9) at (12,1);
\coordinate (10) at (14,0);
\coordinate (11) at (15,2);
\coordinate (12) at (16,0);
\coordinate (13) at (18,1);
\draw (10) node {\small$\bullet$} node [below] {\small$g_{i-1}$};
\draw (11) node {\small$\bullet$} node [above] {\small$h_i$};
\draw (12) node {\small$\bullet$} node [below] {\small$g_{i+1}$};
\draw (10) to node[above left=-2pt] {\small$a_i$} (11);
\draw (11) to node[above right=-2pt] {\small$a_{i+1}$} (12);
\draw[snake,segment amplitude=2pt,segment length=6pt] (9)--(10);
\draw[snake=coil,segment aspect=0,segment amplitude=2pt,segment length=12pt] (12)--(13);
\draw (15,-1.5) node {\small where $\;h_i=(g_{i+1},a_{i+1}',g_i,a_i,g_{i-1})\in \G^5$};
\draw[->, bend right=10] (4,2) to node [sloped, below] {\small$(g_{i-1},a_i)\neq(g_{i+1},a_{i+1}')$} (11,0.5);
}$$	
\caption{Illustration of the uplifting of a river.}\label{fig4}
\end{figure}

The word $v$ either keeps the length of $u$ or decreases it by 4 units. If $h_i\not\in\G^5$, the uplifting of $g_i$ eliminates the river $g_i$, but may create a new river $g_{i-1}$ if neither $g_{i-1}$ nor $g_{i+1}$ are ridges of $u$. If $h_i\in\G^5$, then the uplifting of $g_i$ replaces the river $g_i$ with a ridge $h_i$. However, $g_{i-1}$ and $g_{i+1}$ become rivers of $v$ unless they were ridges of $u$, respectively.

Let us also analyze the impact of this operation onto the `anchors subsequence' of the landscape $u$. Continuing to use the notation above, if $h_i\in\G^5$, then this anchors subsequence remains unchanged. However, if $h_i\not\in\G^5$, then we obtain a new anchors subsequence by deleting a subword of the form $xx'$, $x'x$ or $11$ from the original one.

The next result tells us that the relation $\to$ is contained in $\rho$.

\begin{lem}\label{to}
With the notation introduced above, $u\ap v$.
\end{lem}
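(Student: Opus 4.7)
The plan is to exploit that $\rho$ is a congruence on $\G^+$, so $u\ap v$ reduces to showing the $\rho$-equivalence of the locally modified subword: in the first branch the subword $g_{i-1}a_ig_ia_{i+1}g_{i+1}$ of $u$ must be shown $\rho$-equivalent to $g_{i-1}a_ih_ia_{i+1}g_{i+1}$, and in the collapsing branch (where $g_{i+1}=g_{i-1}$ makes the right-hand contexts of $u$ and $v$ coincide) to the single letter $g_{i-1}$. The two branches will be handled respectively by the third family of relations in $\rho_s$ and by Lemma~\ref{prodelem}(i).

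For the case $h_i\in\G^5$, the 5-tuple $h_i=(g_{i+1},a_{i+1}',g_i,a_i,g_{i-1})$ has entries $h_i^l=g_{i+1}$, $h_i^{la}=a_{i+1}'$, $h_i^c=g_i$, $h_i^{ra}=a_i$ and $h_i^r=g_{i-1}$. Unfolding the dot notation recalled earlier gives
$$h_i^r\!\cdot\!h_i\!\cdot\!h_i^l=g_{i-1}a_ih_ia_{i+1}g_{i+1}\quad\text{and}\quad h_i^r\!\cdot\!h_i^c\!\cdot\!h_i^l=g_{i-1}a_ig_ia_{i+1}g_{i+1}.$$
Since $g_i$ is a river, $\up(g_{i-1})=\up(g_{i+1})\geq 1$, so $\up(h_i)\geq 2$ and the third family of relations in $\rho_s$ applies to $h_i$, yielding the required equivalence in a single step.

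For the collapsing case $g_{i-1}=g_{i+1}$ with $a_i=a_{i+1}'$, write $g_i=g_{i-1}^s$ with $a_i=(g_{i-1}^{sa})'$ (from the right-anchoring of $g_{i-1}a_ig_i$) and $g_i=g_{i-1}^t$ with $a_{i+1}=g_{i-1}^{ta}$ (from the left-anchoring of $g_ia_{i+1}g_{i+1}$). The hypothesis $a_i=a_{i+1}'$ then forces $g_{i-1}^{sa}=g_{i-1}^{ta}$, and a brief case split on the structure of $g_{i-1}$ pushes $s=t$: if $g_{i-1}\in\G_d$ then $g_{i-1}^l\neq g_{i-1}^r$ distinguishes $s$ from $t$, and if $g_{i-1}\in\G_e$ then the distinctness of $g_{i-1}^{la}$ and $g_{i-1}^{ra}$ does the same. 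Consequently the subword collapses to $g_{i-1}(g_{i-1}^{sa})'g_{i-1}^sg_{i-1}^{sa}g_{i-1}=g_{i-1}g_{i-1}^Sg_{i-1}$ with $S=L$ if $s=l$ and $S=R$ if $s=r$. Lemma~\ref{prodelem}(i) delivers $g_{i-1}g_{i-1}^Sg_{i-1}\ap g_{i-1}$ whenever $\up(g_{i-1})\geq 2$, and the remaining boundary instance $g_{i-1}=g_{xx'}$ (where $\up=1$ lies outside the scope of Lemma~\ref{prodelem}) is dispatched by a direct $\rho_e$-computation combined with $g_{xx'}\ap xx'$: the $S=L$ subcase gives $g_{xx'}111g_{xx'}\ap g_{xx'}$ immediately, and the $S=R$ subcase gives $g_{xx'}x1x'g_{xx'}\ap xx'xx'xx'\ap xx'\ap g_{xx'}$.

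The only real obstacle is bookkeeping rather than conceptual: aligning the dot-notation reading of the third relation in $\rho_s$ with the river-uplifting substitution, and checking that the anchor conditions force $s=t$ in the collapsing case. Once these matchings are made the argument reduces to a one-step application of $\rho_s$ in the first branch and of Lemma~\ref{prodelem}(i) (plus a trivial $\rho_e$-verification for $g_{xx'}$) in the second.
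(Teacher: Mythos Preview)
Your proof is correct and follows essentially the same two-case split as the paper: the $h_i\in\G^5$ branch is identified with the $\rho_s$ relation $h_i^r\!\cdot\!h_i\!\cdot\!h_i^l\ap h_i^r\!\cdot\!h_i^c\!\cdot\!h_i^l$, and the collapsing branch is reduced to $g_{i-1}g_{i-1}^Sg_{i-1}\ap g_{i-1}$ via Lemma~\ref{prodelem}(i). The paper reaches the latter identification slightly more directly (it reads off $S$ from the left-anchoring condition on $g_ia_{i+1}g_{i+1}$ together with $a_i=a_{i+1}'$, without first arguing $s=t$), but the substance is the same.

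You are in fact more careful than the paper on one point: Lemma~\ref{prodelem} is stated only for $\up(g)>1$, so its citation in the paper's proof leaves the case $g_{i-1}=g_{xx'}$ (arising when the river $g_i=1$) unjustified. Your explicit $\rho_e$-verification of $g_{xx'}111g_{xx'}\ap g_{xx'}$ and $g_{xx'}x1x'g_{xx'}\ap g_{xx'}$ closes that small gap.
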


\begin{proof}
If $h_i\not\in\G^5$, then
$$g_{i-1}a_ig_ia_{i+1}g_{i+1}=\left\{\begin{array}{ll}
g_{i-1}g_{i-1}^Lg_{i-1}&\mbox{ if }(g_i,a_{i+1})= (g_{i+1}^l,g_{i+1}^{la})\,,\\ [.2cm]
g_{i-1}g_{i-1}^Rg_{i-1}&\mbox{ if }(g_i,a_{i+1})= (g_{i+1}^r,g_{i+1}^{ra})\,;
\end{array}\right.$$
Since $g_{i-1}g_{i-1}^Lg_{i-1}\ap g_{i-1}$ and $g_{i-1}g_{i-1}^Rg_{i-1}\ap g_{i-1}$ by Lemma \ref{prodelem}.$(i)$, we conclude that $g_{i-1}a_ig_ia_{i+1}g_{i+1}\ap g_{i-1}$ and so $u\ap v$. If $h_i\in\G^5$, then
$$g_{i-1}a_ih_ia_{i+1}g_{i+1}=h_i^r\cdot h_i\cdot h_i^l\quad\mbox{ and }\quad g_{i-1}a_ig_ia_{i+1}g_{i+1}=h_i^r\cdot h_i^c\cdot h_i^l\,.$$
By definition of $\rho_s$, we conclude that $g_{i-1}a_ih_ia_{i+1}g_{i+1}\ap g_{i-1}a_ig_ia_{i+1}g_{i+1}$; whence $u\ap v$ also in this case.
\end{proof}

We will denote by $\xrightarrow{*}$ the reflexive and transitive closure of $\to$. Due to the previous result, we know that $\xr{*}$ is contained in $\rho$. Thus, if $u$ is a landscape, then
$$U(u)=\{v\in\Gp\,|\;u\xr{*}v\}\subseteq [u]\,.$$
If we assume that $u$ has length $2n+1$ and height $i$, then the landscapes from $U(u)$ all have length at most $2n+1$ and height at most $j=\lfloor i+n/2\rfloor$. They all have in common with $u$ its first and last letters. If we look to the number of rivers, we conclude they all must have at most $\lfloor n/2\rfloor$ rivers each. Hence, for each $v\in U(u)$, we record the number of rivers of $v$ in a $j$-tuple $r(v)=(r^1(v),\cdots,r^j(v))$, where $r^k(v)$ denotes the number of rivers of $v$ of height $k-1$. The sum of all entries of $r(v)$ gives us the number of rivers of $v$ and so is at most $\lfloor n/2\rfloor$. 

Let $R(u)=\{r(v)\,|\; v\in U(u)\}$. The set $R(u)$ is obviously finite. We order the elements of $R(u)$ using the lexicographic order. If $v\in U(u)$ and $v\to v_1$ by uplifting a river of height $k$, then $v_1\in U(u)$, 
$$r^k(v_1)=r^k(v)-1,\; r^{k+1}(v)\leq r^{k+1}(v_1)\leq r^{k+1}(v)+2\mbox{ and } r^{k_1}(v_1)=r^{k_1}(v)$$
for all other $k_1$; whence $r(v_1)<r(v)$. Therefore, we cannot apply upliftings of rivers indefinitely to $u$. We must stop only when we get a landscape with no river, that is, a landscape from $\ltp\cup\lo\cup \G'$. Note that these landscapes give us $j$-tuples with all entries equal to $0$. Thus $R(u)$ must always contain the $j$-tuple with all entries equal to $0$, and this $j$-tuple is obviously the smallest element of $R(u)$.

It is also easy to see that the uplifting of rivers is a commutative operation. In other words, if $g_i$ and $g_j$ are two rivers of $u\in\lx$, then the uplifting of $g_i$ followed by the uplifting of $g_j$ gives the same landscape as the uplifting of $g_j$ followed by the uplifting of $g_i$. Hence, the uplifting of rivers is a system of rules commonly known as a noetherian locally confluent system of rules. An important property of this kind of systems is that, independently of the order of the rules that we choose to apply to an element, we must always stop after a finite number of steps with the same `reduced' element (see \cite{newman}). In the case considered here, this means that, independently of the order of upliftings of rivers that we choose to apply to $u\in\lx$, we will always end up with the same landscape from $U(u)$ with no rivers. In particular, $U(u)$ has a unique landscape with no rivers. We designate it by $\be_2(u)$. Note further that $\be_2(u)=\be_2(v)$ if $u\in \lx$ and $u\xr{*} v$.

\begin{cor}\label{m2}
$\be_2(u)\ap u$ for all $u\in \lx$.
\end{cor}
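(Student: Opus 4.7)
The plan is to derive this directly from Lemma \ref{to} together with the termination property already established for the uplifting system. Since $\be_2(u)$ was defined as the unique river-free landscape in $U(u)$, and $U(u)$ is exactly the set of landscapes reachable from $u$ via $\xr{*}$, we have by definition a finite sequence
\[
u = v_0 \to v_1 \to \cdots \to v_n = \be_2(u)\,,
\]
where each step is a single uplifting of a river. The existence of such a finite sequence was guaranteed in the preceding paragraphs: the $j$-tuple $r(v)$ strictly decreases in lexicographic order at every uplifting step, and the lex order on $j$-tuples of bounded nonnegative integers is well-founded, so the process terminates in finitely many steps; local confluence (noted explicitly above) then ensures that the resulting river-free landscape is unique, which is precisely $\be_2(u)$.

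Given this finite reduction sequence, the conclusion follows by a trivial induction on $n$. Lemma \ref{to} asserts that $v_i \ap v_{i+1}$ whenever $v_i \to v_{i+1}$, so by transitivity of $\ap$ we obtain $u = v_0 \ap v_n = \be_2(u)$. No additional machinery is required; in particular, we do not need to inspect the specific structure of $\be_2(u)$ or distinguish between the two cases ($h_i \in \G^5$ versus $h_i \notin \G^5$) that appeared in the proof of Lemma \ref{to}, because that case analysis has already been absorbed into Lemma \ref{to} itself.

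There is no real obstacle here: the statement is a formal corollary of Lemma \ref{to} and the termination/confluence discussion. The only point worth stating carefully in the proof is that $u \xr{*} \be_2(u)$ is a finite chain (otherwise transitivity of $\ap$ would not suffice), but this is precisely what the well-foundedness of the lex order on $R(u)$ gives us.
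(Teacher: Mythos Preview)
Your proof is correct and takes essentially the same approach as the paper, which simply states that the corollary is an obvious consequence of Lemma \ref{to}. You have merely spelled out explicitly the finite chain $u=v_0\to\cdots\to v_n=\be_2(u)$ and the transitivity argument that the paper leaves implicit.
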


\begin{proof}
This corollary is an obvious consequence of Lemma \ref{to}.
\end{proof}

Note that the set of all mountain ranges is closed for $\xr{*}$, that is, $v\in\mrx$ if $u\in\mrx$ and $u\xr{*} v$. Hence, $U(u)\subseteq\mrx$ if $u\in\mrx$. Since the only mountain ranges with no rivers are the mountains, the landscape $\be_2(u)$ must be a mountain if $u$ is a mountain range. Thus, if we set $\be(v)=\be_2(\be_1(v))$ for any word $v\in\Gp$, then $\be(v)\in\mxm$. The previous corollary and Lemma \ref{m1} also allow us to conclude that $\be(v)\ap v$. We leave this observation registered in the next corollary for future reference. 

\begin{cor}\label{m}
$\be(v)\ap v$ for all $v\in \Gp$.
\end{cor}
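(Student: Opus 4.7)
My plan is simply to chain the two preceding results. By definition $\be(v) = \be_2(\be_1(v))$, so the whole corollary reduces to checking (a) that $\be_1(v)$ lies in the domain on which $\be_2$ has already been shown to preserve the $\rho$-class, and (b) that both steps individually are $\rho$-preserving.

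For step (a), I would recall from the construction of $\be_1$ that $\be_1(v) = \be_1(h_0)*\cdots*\be_1(h_k)$ for $v = h_0\cdots h_k$, and each $\be_1(h_i)$ was verified to be a mountain (in particular a mountain range). A concatenation with the $*$-operation of mountain ranges is again a mountain range, hence $\be_1(v) \in \mrx \subseteq \lx$. This is precisely the hypothesis required to apply Corollary \ref{m2} to $\be_1(v)$.

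For step (b), Lemma \ref{m1} gives $\be_1(v) \ap v$, and Corollary \ref{m2} applied to the landscape $u = \be_1(v)$ gives $\be_2(\be_1(v)) \ap \be_1(v)$. Transitivity of $\ap$ then yields
\[
\be(v) \;=\; \be_2(\be_1(v)) \;\ap\; \be_1(v) \;\ap\; v,
\]
which is the desired conclusion.

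There is really no obstacle here: the two prior results were set up exactly so that this corollary is immediate. The only small thing worth noting in passing (and already argued just before the statement) is that $\be(v)$ is in fact a mountain, because $\mrx$ is closed under $\xr{*}$ and the only river-free mountain ranges are the mountains; but this observation is needed only to justify writing $\be(v) \in \mxm$, not for the $\ap$-equivalence itself.
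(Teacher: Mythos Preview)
Your proof is correct and follows exactly the approach the paper takes: the paper states (just before the corollary) that ``the previous corollary and Lemma \ref{m1} also allow us to conclude that $\be(v)\ap v$,'' i.e., chain $\be_2(\be_1(v))\ap\be_1(v)$ from Corollary \ref{m2} with $\be_1(v)\ap v$ from Lemma \ref{m1}. Your verification that $\be_1(v)\in\mrx\subseteq\lx$ so that Corollary \ref{m2} applies is the only point needing a word, and you supply it.
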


Let us take a pause and look more carefully to the structure of the mountains. First, they are words $u=g_0a_1g_1\cdots g_{2n-1}a_{2n}g_{2n}$ of length $4n+1$ ($n\in\mathbb{N}_0$), $g_0=g_{2n}=1$, and peak $g_n$. If $n\geq 1$, then also $g_1=g_{2n-1}=g_{xx'}\in\G_e$. In fact, if $g_i$ is the highest letter of $\la_l(u)$ belonging to $\G_e$, then the subsequence $g_0g_1\cdots g_i$ is completely determined by $g_i\,$: $g_k=g_i^{l^{i-k}}=g_i^{r^{i-k}}\in\G_e$; whence $g_0g_1\cdots g_i\in\G_e^+$. In contrast, the subsequence $a_1\cdots a_i$ may vary: $a_k$ can be any anchor from $\{1,x\}$ if $k$ even, or from $\{1,x'\}$ if $k$ odd.

If we look now to the upper part of the uphill $\la_l(u)$ (above $g_i$), quite the opposite occurs. Each $g_{k-1}$ is either the left or the right entry of $g_k$, for $i<k\leq n$. However, the side of $g_{k-1}$ inside $g_k$ completely determines $a_k$: $a_k$ is the anchor of $g_k$ corresponding to the side of $g_{k-1}$. Therefore, while the (possible empty) subsequence $g_i\cdots g_n$ may vary (although the peak $g_n$ is fixed), the subsequence $a_{i+1}\cdots a_n$ is completely determined by $g_i\cdots g_n$. Note also that $g_{i+1}\cdots g_n\in\G_d^+$ if it is not empty.

As we saw, the uphill $\la_l(u)$ is divided into two sections with very distinct properties. The same conclusions can be drawn for the downhill $\la_r(u)$. If $g_j$ is the first letter from $\G_e$ in $\la_r(u)$, then $g_k=g_j^{l^{k-j}}=g_k^{r^{k-j}}\in\G_e$ and $g_j$ fixes the subsequence $g_j\cdots g_{2n}\in\G_e^+$. In contrast, for each $k>j$, $a_k\in\{1,x\}$ if $k$ even, or $a_k\in\{1,x'\}$ if $k$ odd; whence the subsequence $a_{j+1}\cdots a_{2n}$ does not depend of $g_j$. In the upper part of $\la_r(u)$, each $g_k$ can be either the left or the right entry of $g_{k-1}$, for $n<k\leq j$. However, for $n<k\leq j$, $a_k$ is the inverse anchor of the anchor of $g_k$ determined by the side of $g_{k-1}$.

From the conclusions described above, it is now obvious that the number of uphills and downhills between $1$ and some $g\in\G_n$ is $2^n$ in both cases. We leave this conclusion registered in the following corollary for future reference.

\begin{cor}\label{counthills}
If $g\in\G_n$, then there are $2^n$ uphills from $1$ to $g$ and $2^n$ downhills from $g$ to $1$.
\end{cor}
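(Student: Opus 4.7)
My plan is to prove the corollary by induction on $n=\up(g)$, with the base case $n=1$ and the inductive step obtained by looking at the final (respectively first) step of the hill. This essentially formalizes the combinatorial analysis of the two sections (lower in $\G_e$, upper in $\G_d$) of a hill carried out in the paragraphs preceding the statement.

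For the base case $n=1$, I would note that $\G_1=\G_{1,e}=\{g_{xx'}\}$ and that the anchored-triplet condition forces any uphill from $1$ to $g_{xx'}$ to have the form $1\,a_1\,g_{xx'}$ with $(1,a_1)\in\{(g_{xx'}^l,g_{xx'}^{la}),(g_{xx'}^r,g_{xx'}^{ra})\}=\{(1,1),(1,x')\}$, giving exactly the two uphills $11g_{xx'}$ and $1x'g_{xx'}$; the two downhills are obtained dually from the right-anchored triplet condition.

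For the inductive step with $n\ge 2$ and $g\in\G_n$, I would write an uphill from $1$ to $g$ uniquely as $u*hag$ where $u$ ends at $h$, $h$ has height $n-1$, and the triplet $hag$ is left anchored, i.e.\ $(h,a)\in\{(g^l,g^{la}),(g^r,g^{ra})\}$. If $g\in\G_d$, then $g^l\neq g^r$ are two distinct elements of $\G_{n-1}$, each contributing $2^{n-1}$ uphills by induction, and these $2\cdot 2^{n-1}$ uphills are visibly distinct because their penultimate letters differ. If $g\in\G_e$, then $g^l=g^r$ but $g^{la}\neq g^{ra}$ (one belongs to $\{1\}$ and the other to $\{x,x'\}$), so each of the $2^{n-1}$ uphills from $1$ to $g^l$ yields two distinct uphills to $g$ according to which anchor is appended. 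Either way, the count is $2^n$.

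The downhill statement is proved by the dual argument, decomposing a downhill as $g\,a_1\,h\cdots$ with $(h,a_1)\in\{(g^l,(g^{la})'),(g^r,(g^{ra})')\}$ (right-anchored first triplet) and applying the same case split on $g\in\G_d$ versus $g\in\G_e$. I do not anticipate any real obstacle; the only point requiring care is keeping the left-anchored and right-anchored conventions straight so that the two extensions in the $\G_e$ case indeed give different words, which is immediate from $g^{la}\neq g^{ra}$.
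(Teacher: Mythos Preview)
Your proof is correct and is essentially a clean inductive formalization of the combinatorial analysis the paper carries out in the paragraphs immediately preceding the corollary. The paper does not give a separate proof: it simply observes, after decomposing a hill into its lower $\G_e$-section (where the letter sequence is forced but each anchor can be chosen freely from a two-element set) and its upper $\G_d$-section (where each letter can be either the left or right entry of the next, and the anchor is then forced), that there are two choices at every level and hence $2^n$ hills in each direction. Your induction packages exactly the same dichotomy---two anchors when $g\in\G_e$, two penultimate letters when $g\in\G_d$---step by step rather than in two blocks, which is arguably tidier but not materially different.
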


\subsection{The word problem for $\langle\G,\R\rangle$}

In the next sequence of results we show that $\be(u)$ completely characterizes the $\rho$-class of $u\in\Gp$. We will prove that $[u]=[v]$ if and only if $\be(u)=\be(v)$, for all $u,v\in\Gp$. This result solves the word problem for $\langle\G,\R\rangle$: we just need to compute both $\be(u)$ and $\be(v)$ and check if we get the same mountain to know if $[u]=[v]$. So, let
$$\rho_1=\{(u,v)\in\G^+\times\G^+\,|\;\be(u)=\be(v)\}\,.$$
We want to show that $\rho=\rho_1$, but we already know that $\rho_1\subseteq\rho$ by Corollary \ref{m}. The next couple of results show us that $\rho_e\subseteq\rho_1$. 

\begin{lem}\label{rhoe}
Let $g\in\G'$. Then:
\begin{itemize}
\item[$(i)$] $\be_1(xx')\xr{*}\be_1(g_{xx'})$, $\be_1(xx'x)\xr{*}\be_1(x)$ and $\be_1(x'xx')\xr{*}\be_1(x')$.
\item[$(ii)$] $\la_r(g)*\la_l(g)\xrightarrow{*}g$ and $\be_1(g^2)\xr{*} \be_1(g)$.
\end{itemize}
\end{lem}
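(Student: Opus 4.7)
The strategy for all three equalities in (i) is the same direct calculation: expand $\be_1(u)$ as a $*$-concatenation of the $\be_1$ of the individual letters of $u$, identify the bottom-level rivers created at the junctions, and verify that each lies in the ``merge'' configuration (flanking letters equal with mutually inverse anchors). Concretely, $\be_1(xx') = \be_1(x)*\be_1(x') = 11g_{xx'}x1x'g_{xx'}11$, whose unique river (the central $1$) is flanked by $g_{xx'}, g_{xx'}$ with anchors $x, x'$; since $(x')' = x$ a single merge-uplift yields $11g_{xx'}11 = \be_1(g_{xx'})$. Similarly $\be_1(xx'x) = 11g_{xx'}x1x'g_{xx'}11g_{xx'}x1$ has two successive bottom-level rivers which two merge-uplifts collapse to $11g_{xx'}x1 = \be_1(x)$. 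The third assertion is proved dually.

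For the first claim of (ii), $\la_r(g)*\la_l(g) \xr{*} g$, I would induct on $\up(g)$. The case $g=1$ is trivial, and the base case $g = g_{xx'}$ is the single merge-uplift of the central $1$ in $\la_r(g_{xx'})*\la_l(g_{xx'}) = g_{xx'}111g_{xx'}$. For the inductive step with $\up(g)\geq 2$, the key is the decomposition
\begin{equation*}
\la_r(g)*\la_l(g) \;=\; g(g^{ra})'g^r(g^{r_aa})'\,\bigl[\la_r(g^c)*\la_l(g^c)\bigr]\,g^{l_aa}g^lg^{la}g,
\end{equation*}
read off directly from the defining formulas for $\be_{1,l}(g)$ and $\be_{1,r}(g)$ (and valid also in the edge case $g^c = 1$, where the bracket is just the single letter $1$). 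By induction the bracket reduces in context to $g^c$, leaving $g \cdot g^r \cdot g^c \cdot g^l \cdot g = gg^Rg^cg^Lg$. In this nine-symbol landscape, $g^c$ is the unique river, and the uplift formula gives $h_i = (g^l, g^{la}, g^c, g^{ra}, g^r) = g$, producing $gg^Rgg^Lg$. The two remaining rivers $g^r$ and $g^l$ are each flanked by two copies of $g$ with mutually inverse anchor pairs $\{(g^{ra})', g^{ra}\}$ and $\{(g^{la})', g^{la}\}$; two successive merge-uplifts then collapse the word to $g$.

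The second claim of (ii) follows at once from the first. Expanding
\begin{equation*}
\be_1(g^2) = \be_1(g)*\be_1(g) = \la_l(g)*\la_r(g)*\la_l(g)*\la_r(g)
\end{equation*}
and applying the just-proved reduction to the middle factor $\la_r(g)*\la_l(g) \xr{*} g$, the outer $*$-products absorb the resulting $g$ because $g$ is simultaneously the last letter of $\la_l(g)$ and the first letter of $\la_r(g)$. Hence the word reduces to $\la_l(g)*\la_r(g) = \be_1(g)$.

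The main obstacle I expect is the sign-bookkeeping that justifies $h_i = g$ at the central uplift in the inductive step. One must unpack that the anchor immediately to the left of $g^c$ in the reduced valley is $(g^{r_aa})' = g^{ra}$ while the one immediately to the right is $g^{l_aa} = (g^{la})'$, and then verify via the formula $h_i = (g_{i+1}, a_{i+1}', g_i, a_i, g_{i-1})$ that these recombine with $g^r, g^l$ into exactly the 5-tuple $g = (g^l, g^{la}, g^c, g^{ra}, g^r)$. This is a short but fiddly check resting on the identities $g^{l_aa} = (g^{la})'$ and $g^{r_aa} = (g^{ra})'$, after which the remaining steps are routine local manipulations.
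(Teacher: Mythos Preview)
Your proposal is correct and follows essentially the same approach as the paper's own proof: the same direct computations for (i), the same induction on $\up(g)$ with the same decomposition $\la_r(g)*\la_l(g)=g(g^{ra})'g^r(g^{r_aa})'\bigl[\la_r(g^c)*\la_l(g^c)\bigr]g^{l_aa}g^lg^{la}g$ for the first claim of (ii), and the same immediate deduction of the second claim. One small typo: your expansion of $\be_1(xx'x)$ is missing a $1$ (it should read $11g_{xx'}x1x'g_{xx'}111g_{xx'}x1$), but this does not affect the argument.
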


\begin{proof}
$(i)$. Clearly
$$\be_1(xx')=\be_1(x)*\be_1(x')=11g_{xx'}x1x'g_{xx'}11\xr{1}11g_{xx'}11=\be_1(g_{xx'})\,.$$
Now,
$$\be_1(xx'x)\to\be_1(g_{xx'})*\be_1(x)=11g_{xx'}111g_{xx'}x1\xr{1}11g_{xx'}x1=\be_1(x)\,.$$
We show $\be_1(x'xx')\xr{*}\be_1(x')$ similarly.

$(ii)$. The first part of $(ii)$ is proved by induction on $\up(g)$. Clearly $\la_r(1)*\la_l(1)=1$ and $\la_r(g_{xx'})*\la_l(g_{xx'})=g_{xx'}111g_{xx'}\xrightarrow{1} g_{xx'}$. Let $g\in \G_i$ for $i\geq 2$ and assume that $\la_r(h)*\la_l(h)\xrightarrow{*}h$ for all $h\in \G_j$ with $j<i$. Since
$$\la_r(g)*\la_l(g)=g(g^{ra})'g^r(g^{r_aa})'\big(\la_r(g^c)*\la_l(g^c)\big)g^{l_aa}g^lg^{la}g\,,$$
we conclude that
$$\la_r(g)*\la_l(g)\xr{*}g(g^{ra})'g^r(g^{r_aa})'g^cg^{l_aa}g^lg^{la}g$$
by the induction hypothesis. But 
$$g(g^{ra})'g^r(g^{r_aa})'g^cg^{l_aa}g^lg^{la}g\;\xr{g^c}\;g(g^{ra})'g^r g^{ra}g(g^{la})'g^lg^{la}g\xr{g^r\!,\,g^l} g\,.$$
We have shown that $\la_r(g)*\la_l(g)\xr{*} g$ and proved the first part of $(ii)$. Now, 
$$\be_1(g^2)=\la_l(g)*\la_r(g)*\la_l(g)*\la_r(g)\xr{*}\la_l(g)*g*\la_r(g)=\be_1(g)\,,$$
and we have proved the second part of $(ii)$ also.
\end{proof}

\begin{cor}\label{rhoerhoo}
$\rho_e\subseteq\rho_1$.
\end{cor}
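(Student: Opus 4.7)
The strategy is to verify the inclusion one generator at a time: for each pair $(u,v) \in \rho_e$, I show that $\be(u) = \be(v)$. The crucial leverage is that $\be_2$ is constant on $\xr{*}$-equivalence classes, as was established just above from the confluence of the uplifting system. Consequently, whenever one can exhibit a derivation $\be_1(u) \xr{*} \be_1(v)$, applying $\be_2$ to both sides immediately yields $\be(u) = \be(v)$, i.e.\ $(u,v) \in \rho_1$.

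For three of the four ``structural'' generating pairs $(g_{xx'}, xx')$, $(xx'x, x)$ and $(x'xx', x')$, Lemma \ref{rhoe}$(i)$ supplies precisely the required derivations, so no further work is needed. For the pairs of the form $(g^2, g)$ with $g \in \G'$, Lemma \ref{rhoe}$(ii)$ gives $\be_1(g^2) \xr{*} \be_1(g)$, hence $\be(g^2) = \be(g)$. The degenerate subcase $g = 1$ is immediate: $\be_1(1 \cdot 1) = 1 * 1 = 1 = \be_1(1)$. So Lemma \ref{rhoe} carries essentially the whole argument.

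The only pairs not covered verbatim by Lemma \ref{rhoe} are the identity-absorption pairs $(1g, g)$ and $(g1, g)$, and these succumb to a direct computation with $\be_1$ as a $*$-product. Since $\be_1(1) = 1$, one has $\be_1(1g) = \be_1(1) * \be_1(g) = 1 * \be_1(g)$. Now $\be_1(g)$ begins with the letter $1$ in every case: trivially for $g \in \{1, x, x', g_{xx'}\}$, where the explicit formulas from the special-words subsection show it directly; and for any other $g \in \G^5$ because $\be_1(g)$ is by construction a mountain, which starts with $1$ by definition of a mountain range. Hence the $*$-operation just removes the junction duplication and $\be_1(1g) = \be_1(g)$, so $\be(1g) = \be(g)$. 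The pair $(g1, g)$ is handled symmetrically, using that $\be_1(g)$ ends in $1$ for the same reason.

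No real obstacle is anticipated: the substantive content lives in Lemma \ref{rhoe}, and the only additional input is the one-line observation that $*$-multiplication by the trivial mountain $1$ on either side is the identity on $\be_1(g)$.
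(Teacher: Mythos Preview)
Your proposal is correct and follows essentially the same approach as the paper: invoke Lemma \ref{rhoe} for the pairs $(xx'x,x)$, $(x'xx',x')$, $(g_{xx'},xx')$ and $(g^2,g)$, then dispose of $(1g,g)$ and $(g1,g)$ directly. The paper condenses your last paragraph into the single remark that $\be(g)=\be(g1)=\be(1g)$ is ``obvious,'' and your separate treatment of $g=1$ in the $(g^2,g)$ case is already subsumed by Lemma \ref{rhoe}$(ii)$, but these are only cosmetic differences.
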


\begin{proof}
The pairs $(xx'x,x)$, $(x'xx',x')$, $(xx',g_{xx'})$ and $(g^2,g)$, for $g\in \G'$, belong to $\rho_1$ by Lemma \ref{rhoe}. Then $\rho_e\subseteq\rho_1$ since it is also obvious that $\be(g)=\be(g1)=\be(1g)$.
\end{proof}

The next result contains technical details needed to prove that $\rho_s\subseteq\rho_1$.

\begin{lem}\label{valelem}
Let $g\in \G'$ and let $u=g_0a_1g_1\cdots a_ng_n\in \lx$. Then: 
\begin{itemize}
\item[$(i)$] If $\up(g)\geq 2$ and $s\in \{l,r\}$, then 
$\la_r(g^s)*\be_1(g^{sa})*\la_l(g)\xrightarrow{*}g^sg^{sa}g\;$ and $\;\la_r(g)*\be_1((g^{sa})')*\la_l(g^s)\xr{*}g(g^{sa})'g^s\,.$
\item[$(ii)$] $\be_1(u)\xr{*} \la_l(g_0)*u*\la_r(g_n)$.
\item[$(iii)$] $\be(u)=\be_2(u)$ if $u\in \mrx$.
\item[$(iv)$] $\be(u)=u$ if $u\in \mxm$.
\end{itemize}
\end{lem}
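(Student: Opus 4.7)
The plan is to prove (i)--(iv) in order, with the bulk of the work concentrated in (i); parts (ii)--(iv) then follow quickly. For (i) I would induct on $\up(g)$. In the base case $\up(g)=2$, $\G_{2,d}=\emptyset$ forces $g\in\G_{2,e}$ with $g^l=g^r=g_{xx'}$ and $g^c=1$; the candidate landscape $\la_r(g^s)*\be_1(g^{sa})*\la_l(g)$ (or its dual) is short, with rivers that are letters $1$ sandwiched between two $g_{xx'}$'s. The associated candidate 5-tuple $h=(g_{xx'},\,*,\,1,\,*,\,g_{xx'})$ always has its two anchors coincident, failing the distinctness $\{1,x\}$ required for $\G_{2,e}$; so each uplift collapses its triplet and one lands on the target length-3 anchored triplet.

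The inductive step rests on the decomposition
\[
\la_l(g)=\la_l(g^c)\cdot g^{l_aa}g^lg^{la}g
\]
(and its mirror $\la_r(g)=g(g^{ra})'g^r(g^{r_aa})'\cdot\la_r(g^c)$), obtained by reading the defining formulas for $\be_{1,l}(g)$ and $\be_{1,l}(g^c)$ side by side: they share the common prefix up to $g^c$, differing only in the final block. For the first statement of (i) with $s=l$, substituting yields
\[
\la_r(g^l)*\be_1(g^{la})*\la_l(g)=\bigl[\la_r(g^l)*\be_1(g^{la})*\la_l(g^c)\bigr]\cdot g^{l_aa}g^lg^{la}g,
\]
and the IH (\emph{second} statement of (i)) applied to $g^l$---with $s=l$ if $l_a=l^2$ and $s=r$ if $l_a=lr$, chosen so that $(g^l)^s=g^c$ and $((g^l)^{sa})'=g^{la}$---reduces the bracket to $g^lg^{la}g^c$. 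The word becomes $g^lg^{la}g^cg^{l_aa}g^lg^{la}g$, in which $g^c$ is the sole river; its uplift produces $h=(g^l,g^{la},g^c,g^{la},g^l)$, which fails both $\G_e$ (the two anchors coincide, violating $\{1,x\}$ or $\{1,x'\}$) and $\G_d$ (its left and right entries coincide), so the triplet collapses to $g^lg^{la}g$. For the second statement, the dual decomposition combined with the IH \emph{first} statement for $g^l$ gives $g(g^{ra})'g^r(g^{r_aa})'g^c(g^{la})'g^l$; the uplift of $g^c$ yields $h=g$ itself (creating a new river $g^r$), and the subsequent uplift of $g^r$ collapses by the analogous anchor-distinctness check at height $\up(g)+1$, producing $g(g^{la})'g^l$. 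The $s=r$ cases are mirror images.

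For (ii), expand $\be_1(u)=\la_l(g_0)*\la_r(g_0)*\be_1(a_1)*\la_l(g_1)*\cdots*\la_l(g_n)*\la_r(g_n)$ and telescope using (i): at each internal junction, $\la_r(g_{i-1})*\be_1(a_i)*\la_l(g_i)\xr{*}g_{i-1}a_ig_i$, invoking the statement of (i) dictated by the anchoring type (left or right) of $g_{i-1}a_ig_i$. Low-height junctions where (i) does not formally apply (the finite catalogue of triplets of the form $1\cdot a\cdot g_{xx'}$ and $g_{xx'}\cdot a\cdot 1$ for $a\in\{1,x,x'\}$) are checked by direct computation. Part (iii) is immediate from (ii): for $u\in\mrx$ we have $g_0=g_n=1$, so $\la_l(g_0)=\la_r(g_n)=1$ and $\la_l(g_0)*u*\la_r(g_n)=u$; hence $\be_1(u)\xr{*}u$, and by confluence $\be(u)=\be_2(\be_1(u))=\be_2(u)$. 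Part (iv) is then immediate: a mountain has no rivers, so $\be_2(u)=u$ and $\be(u)=u$. The main obstacle is the inductive step of (i): choosing the correct $s$ for the IH from the value of $l_a$, and verifying case by case that the critical uplift candidates are forced outside $\G^5$ by the anchor-distinctness conditions built into the definitions of $\G_{i,e}$ and $\G_{i,d}$.
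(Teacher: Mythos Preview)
Your overall strategy matches the paper's proof: induction on $\up(g)$ for (i), then a telescoping argument for (ii), with (iii) and (iv) as immediate consequences. Your inductive step for (i) is correct and agrees with the paper; note only that the ``mirror image'' pairing is $(\text{first statement},\,s=l)\leftrightarrow(\text{second statement},\,s=r)$ and $(\text{first statement},\,s=r)\leftrightarrow(\text{second statement},\,s=l)$, not $s=l\leftrightarrow s=r$ within each statement.

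There is, however, a genuine error in your base case. You claim that for $\up(g)=2$ every candidate $5$-tuple $h=(g_{xx'},*,1,*,g_{xx'})$ has coincident anchors and hence every uplift collapses. This is false. Take $g=g_{2,e,1}=(g_{xx'},1,1,x,g_{xx'})$ and $s=r$, so $g^{ra}=x$. Then
\[
\la_r(g^r)*\be_1(x)*\la_l(g)=g_{xx'}111g_{xx'}x11g_{xx'}1g.
\]
The first uplift (of the leftmost river $1$) does collapse, yielding $g_{xx'}\,x\,1\,1\,g_{xx'}\,1\,g$. But now the river $1$ sits between $g_{xx'}$ with anchors $a_i=x$ and $a_{i+1}=1$, so the candidate is $h=(g_{xx'},1,1,x,g_{xx'})=g_{2,e,1}=g\in\G_{2,e}$: the anchors are \emph{not} coincident. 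This uplift \emph{creates} $g$, giving $g_{xx'}\,x\,g\,1\,g_{xx'}\,1\,g$, and only the subsequent uplift of the river $g_{xx'}$ (now with coincident anchors at height~$3$) collapses to the target $g_{xx'}xg=g^rg^{ra}g$. The same phenomenon occurs for $g=g_{2,e,2}$ with $s=r$, and dually in the second statement with $s=l$. So the base case already exhibits the ``create $g$, then collapse'' pattern you correctly identified in your inductive step for the second statement; it cannot be dispatched by the uniform anchor-coincidence argument you propose. The paper handles this by an explicit check of all four $(g,s)$ combinations.

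Aside from this, your plan is sound---and you are actually more careful than the paper in (ii) by flagging the low-height junctions (the triplets $1\,a\,g_{xx'}$ and $g_{xx'}\,a\,1$) that (i) does not literally cover.
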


\begin{proof}
$(i)$. We use induction again on $\up(g)$ to prove $(i)$. We need, in fact, to prove both statements of $(i)$ simultaneously by induction on $\up(g)$. For $\up(g)=2$, recall first the two elements of $\G_2=\G_{2,e}$ described on page \pageref{G2e}. Then $g^s=g_{xx'}$, $\la_r(g^s)=g_{xx'}11$ and 
$$\la_l(g)=\left\{\begin{array}{ll}
11g_{xx'}1g_{2,e,1}&\mbox{ if } g=g_{2,e,1}\,, \\ [.2cm]
1x'g_{xx'}xg_{2,e,2}&\mbox{ if } g=g_{2,e,2}\,.
\end{array}\right.$$
Consider first the case $s=l$. If $g=g_{2,e,1}$, then $\be_1(g^{sa})=\be_1(1)=1$ and
$$\la_r(g^s)*\be_1(g^{sa})*\la_l(g)\;=\;g_{xx'}111g_{xx'}1g\;\xr{1}\;g_{xx'}1g\;=\;g^sg^{sa}g\,.$$
If $g=g_{2,e,2}$, then $\be_1(g^{sa})=\be_1(x)=11g_{xx'}x1$ and
$$\la_r(g^s)*\be_1(g^{sa})*\la_l(g)\, =\, g_{xx'}111g_{xx'}x1x'g_{xx'}xg\; \xr{1,\,1}\;g_{xx'}xg \,=\, g^sg^{sa}g\,.$$
Now, consider the case $s=r$. If $g=g_{2,e,1}$, then $\be_1(g^{sa})=\be_1(x)=11g_{xx'}x1$ and
$$\begin{array}{ll}
\la_r(g^s)*\be_1(g^{sa})*\la_l(g)\hspace*{-.1cm}&=\;g_{xx'}111g_{xx'}x11g_{xx'}1g\; \xr{1}\; g_{xx'}x11g_{xx'}1g\;\xr{1}\\ [.2cm]
&\xr{1}\;g_{xx'}xg1g_{xx'}1g \;\xr{g_{xx'}}\;g_{xx'}xg\;=\; g^sg^{sa}g\,.
\end{array}$$
If $g=g_{2,e,2}$, then $\be_1(g^{sa})=\be_1(1)=1$ and
$$\begin{array}{ll}
\la_r(g^s)*\be_1(g^{sa})*\la_l(g)\hspace*{-.1cm}&=\;g_{xx'}11x'g_{xx'}xg\;\xr{1}\\ [.2cm]
&\xr{1}\;g_{xx'}1gx'g_{xx'}xg\;\xr{g_{xx'}}\;g_{xx'}1g\;=\;g^sg^{sa}g\,.
\end{array}$$
We have proved the statement $\la_r(g^s)*\be_1(g^{sa})*\la_l(g)\xrightarrow{*}g^sg^{sa}g$ for  $\up(g)=2$. Similarly, we prove the statement $\la_r(g)*\be_1((g^{sa})')*\la_l(g^s)\xr{*}g(g^{sa})'g^s$ for $\up(g)=2$.

Let $g\in \G_i$ with $i>2$, and assume that $\la_r(h^t)*\be_1(h^{ta})*\la_l(h)\xrightarrow{*}h^th^{ta}h$ and $\;\la_r(h)*\be_1((h^{ta})')*\la_l(h^t)\xr{*}h(h^{ta})'h^t$ for all $h\in\G_j$ with $2\leq j<i$ and $t\in\{l,r\}$. Let $s\in\{l,r\}$ and recall that $g^{sa}=(g^{s_aa})'$ and $g^{s_a}=g^c$. Then
$$\begin{array}{ll}
\la_r(g^s)*\be_1(g^{sa})*\la_l(g)\hspace*{-.1cm}&=\;\la_r(g^s)*\be_1(g^{sa})*\la_l(g^c)g^{l_aa}g^lg^{la}g \\ [.2cm]
&=\;\la_r(g^s)*\be_1((g^{s_aa})')*\la_l(g^c)g^{l_aa}g^lg^{la}g  \\ [.2cm]
&\xr{*}\;g^s(g^{s_aa})'g^cg^{l_aa}g^lg^{la}g \,,
\end{array}$$
where $\xr{*}$ follows by the induction hypothesis. Note now that if $s=l$, then
$$g^s(g^{s_aa})'g^cg^{l_aa}g^lg^{la}g=g^l(g^{l_aa})'g^cg^{l_aa}g^lg^{la}g\xr{g^c}g^lg^{la}g=g^sg^{sa}g\,;$$
and if $s=r$, then 
$$\begin{array}{lcl}
g^s(g^{s_aa})'g^cg^{l_aa}g^lg^{la}g&=&g^r(g^{r_aa})'g^cg^{l_aa}g^lg^{la}g\\ [.2cm] 
&\xr{g^c}&g^r(g^{r_aa})'gg^{l_aa}g^lg^{la}g \\ [.2cm]  
&=&g^rg^{ra}g(g^{la})'g^lg^{la}g \\ [.2cm]
&\xr{g^l}&  g^rg^{ra}g\;=\;g^sg^{sa}g\,.
\end{array}$$
We have proved that $\la_r(g^s)*\be_1(g^{sa})*\la_l(g)\xrightarrow{*}g^sg^{sa}g$. The proof of $\la_r(g)*\be_1((g^{sa})')*\la_l(g^s)\xr{*}g(g^{sa})'g^s$ is similar, and so $(i)$ is proved by mathematical induction.

$(ii)$. For $i\in\{0,\cdots,n\}$, let $u_i=g_0a_1g_1\cdots a_ig_i$. Thus all $u_i$ are prefixes of $u$ and $u=u_n$. Consequently, $(ii)$ becomes proved once we show that $\be_1(u_i)\xr{*} \la_l(g_0)*u_i*\la_r(g_i)$ by induction on $i$. Clearly 
$$\be_1(u_0)=\la_l(g_0)*u_0*\la_r(g_0)$$
since $u_0=g_0$. Assume that $\be_1(u_{i-1})\xr{*} \la_l(g_0)*u_{i-1}*\la_r(g_{i-1})$. By definition of landscape and $(i)$, we have also 
$$\la_r(g_{i-1})*\be_1(a_i)*\la_l(g_i)\xr{*}g_{i-1}a_ig_i\,.$$
Now, using the induction hypothesis, we obtain
$$\begin{array}{ll}
\be_1(u_i)\hspace*{-.1cm}& =\;\be_1(u_{i-1})*\be_1(a_i)*\be_1(g_i)\\ [.2cm]
& \xr{*}\; \la_l(g_0)*u_{i-1}*\la_r(g_{i-1})*\be_1(a_i)*\la_l(g_i)*\la_r(g_i) \\ [.2cm]
& \xr{*}\; \la_l(g_0)*u_{i-1}*(g_{i-1}a_ig_i)*\la_r(g_i)=\la_l(g_0)*u_{i}*\la_r(g_i)\,.
\end{array}$$
We have finished the proof of $(ii)$.

$(iii)$ follows easily from $(ii)$ since $g_0=1=g_n$ if $u\in\mrx$:
$$\be(u)=\be_2(\be_1(u))=\be_2(\la_l(g_0)*u*\la_r(g_n))=\be_2(u)\,;$$
and $(iv)$ is obvious from $(iii)$.
\end{proof}

\begin{prop}\label{mrhoinv}
Let $g\in \G^5$ with $\up(g)\geq 2$ and $u,v\in \Gp$.
\begin{itemize}
\item[$(i)$] $\be(g)=\be(g^cg^Lg)=\be(gg^Rg^c)\;$ and $\;\be(g^r\!\!\cdot\! g^c\!\cdot\! g^l)=\be(g^r\!\!\cdot\! g\!\cdot\! g^l)$.
\item[$(ii)$] $\be(u)=\be(v)$ if and only if $[u]=[v]$.
\end{itemize}
\end{prop}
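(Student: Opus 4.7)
My plan is to handle (i) by direct reductions via Lemma~\ref{valelem}(ii), and then deduce (ii) by showing that the relation $\rho_1=\{(u,v):\be(u)=\be(v)\}$ is a congruence on $\Gp$ containing $\R$.

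For the first part of (i), I would observe that $g^cg^Lg=g^cg^{l_aa}g^lg^{la}g$ (using $g^{l_aa}=(g^{la})'$) is an uphill from $g^c$ through $g^l$ to $g$, so Lemma~\ref{valelem}(ii) gives $\be_1(g^cg^Lg)\xr{*}\la_l(g^c)*g^cg^Lg*\la_r(g)$. The recursive shape of $\be_{1,l}$ implies $\la_l(g)=\la_l(g^c)*g^cg^Lg$: the trailing $g^c$ of $\la_l(g^c)$ merges with the leading $g^c$ of $g^cg^Lg$, and the anchors $g^{l_aa},g^{la}$ match exactly the top block of $\la_l(g)$. Hence the reduction terminates at $\la_l(g)*\la_r(g)=\be_1(g)$, which is already a mountain, and so $\be(g^cg^Lg)=\be(g)$. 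The left--right dual yields $\be(gg^Rg^c)=\be(g)$. For the third equality, I would apply Lemma~\ref{valelem}(ii) to both $g^r\!\cdot\! g\!\cdot\! g^l$ and $g^r\!\cdot\! g^c\!\cdot\! g^l$, obtaining the same outer envelope $\la_l(g^r)*\cdots*\la_r(g^l)$. In the $g^c$ version, the central $g^c$ is an interior river whose uplift produces the 5-tuple $(g^l,g^{la},g^c,g^{ra},g^r)$, which is precisely $g$. That single uplift transforms the $g^c$-reduction into the $g$-reduction, so both reach a common mountain.

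For (ii), the ``if'' direction is immediate from Corollary~\ref{m}: $\be(u)=\be(v)$ gives $u\ap\be(u)=\be(v)\ap v$. To prove ``only if'', I would verify that $\rho_1$ contains $\R$ (via Corollary~\ref{rhoerhoo} for $\rho_e$ and part~(i) for $\rho_s$) and is compatible with concatenation. The core identity to establish is
$$\be(uw)=\be_2(\be(u)*\be(w))\,,$$
together with its left analogue. This follows from the confluence and noetherianness of the uplifting system, together with the observation that every river in $\be_1(u)$ has both of its $\G'$-neighbors of height $\geq 1$ and therefore lies strictly in the interior, away from the terminal $1$ of $\be_1(u)$. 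Consequently, the sequence of uplifts taking $\be_1(u)$ to $\be(u)$ can be carried out inside $\be_1(u)*\be_1(w)$ without disturbing the $\be_1(w)$-side, and symmetrically for uplifts in $\be_1(w)$; any rivers that only arise at the junction are then absorbed by the outer $\be_2$. Once the displayed identity is in place, $\be(u)=\be(v)$ immediately yields $\be(uw)=\be(vw)$ and symmetrically $\be(wu)=\be(wv)$, so $\rho_1$ is a congruence and $\rho\subseteq\rho_1$; the reverse inclusion is Corollary~\ref{m}.

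I expect the most delicate step to be this separation of upliftings in the two halves of $\be_1(u)*\be_1(w)$. It rests on the height bound for rivers (no river can involve a letter of height $0$) and uses the confluence/noetherian structure already invoked in the definition of $\be_2$ to guarantee that the order of uplifts at interior and junction positions is immaterial.
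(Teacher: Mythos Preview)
Your proposal is correct and follows essentially the same route as the paper: part~(i) is handled via Lemma~\ref{valelem}(ii) exactly as you describe (the paper carries out the same $g^c$-uplift producing $g$), and part~(ii) proceeds by showing $\rho_1$ is a congruence containing $\R$. The only difference is cosmetic: where you justify compatibility via $\be(uw)=\be_2(\be(u)*\be(w))$ using locality of upliftings and confluence, the paper records the equivalent identity $\be(u_1u_2)=\be(\be(u_1)u_2)=\be(u_1\be(u_2))$ and declares it obvious --- the underlying reasoning is the same.
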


\begin{proof}
$(i)$. By Lemma \ref{valelem}.$(ii)$, 
$$\be_1(g^cg^Lg) \xr{*} \la_l(g^c)*(g^cg^Lg)*\la_r(g)=\la_l(g)*\la_r(g)=\be_1(g)\,.$$ 
Thus $\be(g^cg^Lg)=\be(g)$. We have also $\be(gg^Rg^c)=\be(g)$ by duality. Again by Lemma \ref{valelem}.$(ii)$, we have $\be_1(g^r\cdot g\cdot g^l)\xr{*}\la_l(g^r)*(g^r\cdot g\cdot g^l)*\la_r(g^l)$ and
$$\begin{array}{ll}
\be_1(g^r\cdot g^c\cdot g^l)\hspace*{-.1cm}& \xr{*}\;\la_l(g^r)*(g^r(g^{r_aa})'g^cg^{l_aa}g^l)*\la_r(g^l) \\ [.2cm]
& \xr{g^c}\; \la_l(g^r)*(g^r(g^{r_aa})'gg^{l_aa}g^l)*\la_r(g^l) \\ [.2cm]
& =\; \la_l(g^r)*(g^rg^{ra}g(g^{la})'g^l)*\la_r(g^l) \\ [.2cm]
& =\; \la_l(g^r)*(g^r\cdot g\cdot g^l)*\la_r(g^l)\,.
\end{array}$$
Thereby $\be(g^r\cdot g^c\cdot g^l)=\be(g^r\cdot g\cdot g^l)$.

$(ii)$. We already know that $[u]=[v]$ if $\be(u)=\be(v)$ by Corollary \ref{m}. From Corollary \ref{rhoerhoo}, we have also $\rho_e\subseteq\rho_1$. Now, we proved in $(i)$ that $\rho_s\subseteq\rho_1$; whence $R\subseteq\rho_1$. To finish the proof of $(ii)$, we just need to conclude that $\rho_1$ is a congruence on $\G^+$. But this is obvious since $\be(u_1u_2)=\be(\be(u_1)u_2)=\be(u_1\be(u_2))$.
\end{proof}

Although $\G$ and $\rho_e\cup\rho_s$ are infinite sets, Proposition \ref{mrhoinv}.$(ii)$ gives us a solution for the word problem for $\foo$ as explained before: to see if $u\ap v$, we just need to compute both $\be(u)$ and $\be(v)$, and check if we get the same mountain. Thus:

\begin{cor}
The word problem for $\foo$ is decidable.
\end{cor}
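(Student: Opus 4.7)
The plan is to observe that Proposition \ref{mrhoinv}.$(ii)$ has already done the hard mathematical work: it reduces the word problem to the question of whether $\be(u)=\be(v)$. Thus I only need to argue that the function $u\mapsto\be(u)$ is effectively computable, and that equality of its values (which are just words in $\Gp$) is decidable. The latter is trivial, so the bulk of the proof is algorithmic.

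First I would explain how to compute $\be_1(u)$. Given $u=h_0h_1\cdots h_k\in\Gp$, the construction of $\be_1$ is compositional: $\be_1(u)=\be_1(h_0)*\cdots*\be_1(h_k)$. For each $h_i\in\G$, the definition of $\be_1(h_i)$ is directly recursive on the height $\up(h_i)$: the base cases $\be_1(1),\be_1(x),\be_1(x'),\be_1(g_{xx'})$ are given explicitly, and for a $5$-tuple $g$ of height $\geq 2$ the word $\be_1(g)=\la_l(g)*\la_r(g)$ is built by spelling out the prescribed four-element blocks around the central chain $g^{c^n},g^{c^{n-1}},\ldots,g^c,g$. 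Since every entry appearing in those blocks is read off directly from the tuples encoding $g$, this gives a finite procedure to output the mountain range $\be_1(u)$.

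Next I would handle $\be_2$. Starting from the landscape $\be_1(u)$, the algorithm is: scan for a river $g_i$; if one is found, perform the uplifting described in Section 3.5 to produce a new landscape; repeat. I would invoke the two facts already established in the preceding subsection: (a) the relation $\to$ is noetherian because after an uplifting at height $k$ the tuple $r(v)$ strictly decreases in the lexicographic order (each such step either deletes rivers outright or replaces a river of height $k$ by at most two rivers of strictly greater height, while lower entries are preserved), so the loop must terminate; and (b) the system is locally confluent, hence by Newman's lemma the terminal element is unique. Thus the algorithm halts after finitely many steps on a well-defined mountain $\be_2(\be_1(u))=\be(u)\in\mxm$.

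Finally, to decide whether $[u]=[v]$, compute $\be(u)$ and $\be(v)$ via the procedure above and test the two resulting words of $\Gp$ for literal equality; by Proposition \ref{mrhoinv}.$(ii)$ this answers the question. There is no real obstacle here — the entire content has been packaged into Proposition \ref{mrhoinv} and the termination/confluence remarks preceding Corollary \ref{m2} — so the proof is a one-line appeal to Proposition \ref{mrhoinv}.$(ii)$ together with the observation that $\be$ is computable. The only subtlety worth flagging in the write-up is that although both $\G$ and $\R$ are infinite, only the finitely many letters and relations actually appearing in $u$ and $v$ (and in the landscapes produced during the uplifting process) are ever consulted, so each computation involves only finitely much data.
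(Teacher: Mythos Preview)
Your proposal is correct and follows exactly the approach the paper takes: the paper simply states that Proposition~\ref{mrhoinv}.$(ii)$ reduces the word problem to computing $\be(u)$ and $\be(v)$ and comparing the resulting mountains. Your write-up just makes explicit the algorithmic details (computability of $\be_1$, termination and confluence of the uplifting process for $\be_2$) that the paper leaves implicit in the discussion preceding Corollary~\ref{m2}.
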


Proposition \ref{mrhoinv}.$(ii)$ tells us also that each $\rho$-class $[u]$ has a unique mountain, namely $\be(u)$. We will call $\be(u)$ the \emph{canonical form} of $u\in \Gp$. We introduce the operation $\odot$ on both $\mxm$ and $\mx$ as follows:
$$u_1\odot u_2= \be(u_1*u_2)=\be_2(u_1*u_2)\,,$$
for any $u_1,u_2\in\mxm$. 

\begin{prop}\label{model}
$(\mxm,\odot)$ and $(\mx,\odot)$ are models for $\foo$ and $\fo$, respectively. Moreover, if $u,v,w\in \mxm$ are such that $w=u\odot v$, then
\begin{itemize}
\item[$(i)$] $\la_l(u)$ is a prefix of $\la_l(w)$, while $\la_r(v)$ is a suffix of $\la_r(w)$.
\item[$(ii)$] $\up(w)\geq\max\{\up(u),\up(v)\}$, and $\up(w)=\up(u)$ {\rm [}$\up(w)=\up(v)${\rm ]} \iff\ $\ka(w)=\ka(u)$ {\rm [}$\ka(w)=\ka(v)${\rm ]}.
\end{itemize}
\end{prop}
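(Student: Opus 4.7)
My plan is to first establish $(\mxm,\odot)\cong\foo$ and then read off properties $(i)$ and $(ii)$ from the mechanics of the uplifting procedure.

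For the isomorphism, I will define $\psi\colon\mxm\to\foo$ by $\psi(w)=[w]$. By Lemma~\ref{valelem}.$(iv)$ we have $\be(w)=w$ for every $w\in\mxm$, so Proposition~\ref{mrhoinv}.$(ii)$ forces distinct mountains into distinct $\rho$-classes; Corollary~\ref{m} shows each $\rho$-class contains at least one mountain. Hence $\psi$ is a bijection. For $u,v\in\mxm$ the junction letter $\tau(u)=1=\si(v)$ is idempotent in $\foo$, so $uv\ap u*v$, giving
\[
\psi(u\odot v)=[\be(u*v)]=[u*v]=[uv]=[u][v]=\psi(u)\psi(v).
\]
Thus $\psi$ is a semigroup isomorphism, and restricting it to $\mx=\mxm\setminus\{1\}$ yields the corresponding model for $\fo$.

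For $(i)$ I will show that the left hill $\la_l(u)$ is preserved intact as a prefix throughout the $\be_2$-reduction of $u*v$ to $w$. The crucial observation is that no letter inside $\la_l(u)$ can ever be a river in the current landscape: an interior letter $g_i$ of $\la_l(u)$ has its left neighbour $g_{i-1}$ (also inside $\la_l(u)$, hence fixed by induction) at height $\up(g_i)-1$, so the river condition $\up(g_{i-1})=\up(g_i)+1$ fails; the first letter has no left neighbour at all; and the peak $\ka(u)$ at the right end of $\la_l(u)$ has its fixed left neighbour at height $\up(u)-1$. Because an uplift modifies only the five-letter window around the chosen river, this invariant propagates, and $\la_l(u)$ persists unchanged as a prefix of $w$. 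As an uphill prefix of the mountain $w$, it is contained in $\la_l(w)$. The assertion for $\la_r(v)$ follows by the dual argument.

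For $(ii)$, part $(i)$ already gives $\up(w)\geq\up(\ka(u))=\up(u)$ and symmetrically $\up(w)\geq\up(v)$. The equivalence $\up(w)=\up(u)\Leftrightarrow\ka(w)=\ka(u)$ then follows: the reverse direction is trivial, and for the forward direction, if $\la_l(u)\subsetneq\la_l(w)$ then $\la_l(w)$ would contain a letter strictly past $\ka(u)$ of height greater than $\up(u)=\up(w)$, which is impossible, so $\la_l(u)=\la_l(w)$, and $\ka(w)$, the unique ridge of the mountain $w$, coincides with the last letter of $\la_l(w)=\la_l(u)$, namely $\ka(u)$. The argument for $v$ is analogous. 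The main obstacle will be the stability claim in $(i)$: although it is intuitively clear that upliftings happen only in the central valley, one must rule out that intermediate uplifts — which can produce 5-tuples of arbitrarily high level at the centre — turn some letter of $\la_l(u)$, in particular $\ka(u)$ itself, into a river; pinning the left neighbour of $\ka(u)$ inside the untouched prefix $\la_l(u)$, where it is fixed at height $\up(u)-1$ throughout the reduction, handles this.
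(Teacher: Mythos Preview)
Your proof is correct and follows essentially the same approach as the paper. The paper's version is more condensed: for $(i)$ it simply observes that for any single uplifting step $u_1\to u_2$ on a mountain range, $\la_l(u_1)$ is a prefix of $\la_l(u_2)$ (and dually for right hills), then iterates; your argument unpacks \emph{why} this holds by noting that no letter of the uphill prefix can satisfy the river condition, which amounts to the same thing. For the isomorphism and for $(ii)$ the paper just cites Proposition~\ref{mrhoinv}.$(ii)$ and says ``$(ii)$ follows obviously from $(i)$'', whereas you write out the bijection and the height comparison explicitly---again the same content, just more detailed.
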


\begin{proof}
The first part is just a consequence of  Proposition \ref{mrhoinv}.$(ii)$, while $(ii)$ follows obviously from $(i)$. Hence, we only need to prove $(i)$. But notice that, by definition of uplifting of rivers, if $u_1\to u_2$ for a mountain range $u_1$, then $\la_l(u_1)$ is a prefix of $\la_l(u_2)$ and $\la_r(u_1)$ is a suffix of $\la_r(u_2)$. Applying several times the previous observation, we conclude that $\la_l(u)=\la_l(u*v)$ is a prefix of $\la_l(\be(u*v))=\la_l(u\odot v)$, while $\la_r(v)=\la_r(u*v)$ is a suffix of $\la_r(\be(u*v))=\la_r(u\odot v)$.
\end{proof}

\section{The semigroup $\fo$}

In the previous section we constructed $\fo$. In this section we prove that $\fo$ is a regular semigroup weakly generated by $[x]$. We begin by showing that $\fo$ is regular. To prove that $\fo$ is weakly generated by $[x]$, we need first to describe the Green's relations on $\fo$ and then prove that $[g]$ is the only element in $S([g^Rg^c],[g^cg^L])$. This latter fact will imply that any regular subsemigroup of $\fo$ containing $[x]$, will contain also $\G^5$. Using this information, we will conclude that $\fo$ is weakly generated by $[x]$.

Let $u=g_0a_1g_1\cdots g_{n-1}a_ng_n$ be a landscape. Note that $g_na_ng_{n-1}\cdots g_1a_1g_0$ is not a landscape in general but $g_na_n'g_{n-1}\cdots g_1a_1'g_0$ is. We define the \emph{reverse} of $u$ as the landscape $\cev{u}=g_na_n'g_{n-1}\cdots g_1a_1'g_0$. Clearly the reverse of $\cev{u}$ is $u$ again.

\begin{lem}\label{updownhill}
If $u=g_0a_1g_1\cdots a_ng_n\in \lom$, then $u\cev{u}\ap u*\cev{u}\ap g_0$.
\end{lem}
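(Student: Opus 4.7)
The plan is to prove both equalities in two small steps and then to reduce $u*\cev{u}$ to $g_0$ by induction on the length $n$ of the downhill. For the first equality, note that $u$ ends with $g_n$ and $\cev{u}$ begins with $g_n$, so the concatenation $u\cev{u}$ contains a double occurrence $g_ng_n$ at the junction; since $[g_n]$ is an idempotent of $\foo$ (by the relation $g^2\ap g$ in $\rho_e$), one has $u\cev{u}\ap u*\cev{u}$ directly. It therefore remains only to show $u*\cev{u}\ap g_0$.

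For the base case $n=0$ we have $u=\cev{u}=g_0$, so $u*\cev{u}=g_0$ trivially. For the inductive step, write $u=u'a_ng_n$ with $u'=g_0a_1g_1\cdots a_{n-1}g_{n-1}$ a downhill of length $n-1$, so that $\cev{u}=g_na_n'\cev{u'}$ and
$$u*\cev{u}=g_0a_1g_1\cdots g_{n-1}a_ng_na_n'g_{n-1}\cdots a_1'g_0.$$
The crucial observation is that $g_n$ is a river of this word: its two neighbours both equal $g_{n-1}$ and $\up(g_{n-1})=\up(g_n)+1$ because $u\in\lom$. I would then apply an uplifting of this river. Since $u$ is a downhill we have $(g_n,a_n)=(g_{n-1}^s,g_{n-1}^{sa})$ for some $s\in\{l,r\}$, so the candidate 5-tuple furnished by the uplifting procedure is $(g_{n-1},(a_n')',g_n,a_n,g_{n-1})=(g_{n-1},a_n,g_n,a_n,g_{n-1})$. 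This tuple places us in the ``$h_i\notin\G^5$'' branch of the definition of uplifting, hence deletes the block $a_ng_na_n'g_{n-1}$ and leaves exactly $u'*\cev{u'}$. By Lemma \ref{to}, $u*\cev{u}\ap u'*\cev{u'}$, and by the induction hypothesis $u'*\cev{u'}\ap g_0$, closing the induction.

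The only delicate point is verifying carefully that the candidate tuple really does fall in the non-$\G^5$ branch. Its left and right entries both equal $g_{n-1}$, so it cannot belong to $\G_d$; and for it to belong to $\G_e$ one would need its two anchors to form the set $\{1,x\}$ or $\{1,x'\}$ (in particular to be distinct), whereas here both anchors are literally $a_n$. This matches precisely the condition ``$g_{i-1}=g_{i+1}$ and $a_i=a_{i+1}'$'' under which the second branch of the uplifting definition applies, so the reduction step is legitimate and the induction goes through.
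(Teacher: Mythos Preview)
Your proof is correct and follows essentially the same collapse-from-the-inside strategy as the paper. The paper invokes Lemma~\ref{prodelem}$(i)$ directly to get $g_{i-1}a_ig_ia_i'g_{i-1}\ap g_{i-1}$ at each step, whereas you phrase the same reduction as an uplifting of the river $g_n$ and cite Lemma~\ref{to}; since the relevant case of Lemma~\ref{to} is itself proved via Lemma~\ref{prodelem}$(i)$, the two arguments coincide up to packaging. One minor slip: for a downhill the right-anchored triplet gives $a_n=(g_{n-1}^{sa})'$, not $a_n=g_{n-1}^{sa}$ as you wrote, but this plays no role in your verification that the candidate $5$-tuple has equal outer entries and equal anchors, so the argument stands.
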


\begin{proof}
Note that $u*\cev{u}$ is a valley with river $g_n$ and clearly $u\cev{u}\ap u*\cev{u}$ since $[g_n]$ is an idempotent of $\foo$. Let $i\in\{1,\cdots,n\}$. Then $g_i=g_{i-1}^{s_i}$ for some $s_i\in\{l,r\}$ since $u\in\lom$. In fact, we know also that $a_i=(g_{i-1}^{s_ia})'$. Hence 
$$g_{i-1}a_ig_ia_i'g_{i-1}=g_{i-1}\cdot g_{i-1}^{s_i}\cdot g_{i-1}\ap g_{i-1}$$ 
by Lemma \ref{prodelem}.$(i)$. Now, applying several times this observation, we obtain
$$\begin{array}{ll}
u\cev{u} \ap u*\cev{u}\!\!\! & = g_0a_1\cdots g_{n-2}a_{n-1}g_{n-1}a_ng_na_n'g_{n-1}a_{n-1}'g_{n-2}\cdots a_1'g_0 \\ [.2cm]
& \ap g_0a_1\cdots g_{n-2}a_{n-1}g_{n-1}a_{n-1}'g_{n-2}\cdots a_1'g_0 \\ 
&\hspace*{1.3cm} \vdots \\
& \ap g_0a_1g_1a_1'g_0 \ap g_0\,.
\end{array}$$
We have proved that $u\cev{u}\ap u*\cev{u}\ap g_0$ as wanted.
\end{proof}

Next, we prove that $\foo$ is a regular monoid.

\begin{prop}\label{reg}
The monoid $\foo$ is regular and $[\cev{u}]$ is an inverse of $[u]$ for all $u\in \ltp$.
\end{prop}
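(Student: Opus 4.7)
The plan is to reduce both claims to the single computation $u\cev{u}u\ap u$ for $u\in\ltp$ and its dual. Every element of $\foo$ equals $[\be(w)]$ for some mountain $\be(w)\in\mxm$ by Corollary \ref{m}. The trivial mountain $\be(w)=1$ corresponds to the identity $[1]$, which is self-inverse; every nontrivial mountain lies in $\mx=\mrx\cap\ltp$, so once the main statement is established, regularity follows immediately. Thus the core work is showing that for an arbitrary $u\in\ltp$ the element $[\cev{u}]$ is a (von Neumann) inverse of $[u]$.

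To prove $u\cev{u}u\ap u$, I would write $u=u_1*u_2$ with $u_1\in\lop$ and $u_2\in\lom$ meeting at the peak letter $p$. Reversing gives $\cev{u}=\cev{u_2}*\cev{u_1}$ with $\cev{u_2}\in\lop$ and $\cev{u_1}\in\lom$, both hills passing through $p$. Now compute inside $\foo$: concatenating as words,
\begin{equation*}
u\cev{u}u = u_1u_2\,\cev{u_2}\cev{u_1}\,u_1u_2.
\end{equation*}
Lemma \ref{updownhill} applied to the downhill $u_2$ gives $u_2\cev{u_2}\ap p$; the same lemma applied to the downhill $\cev{u_1}$ (whose reverse is $u_1$) gives $\cev{u_1}u_1\ap p$. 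Substituting, $u\cev{u}u\ap u_1\,p\,p\,u_2$, and since $p$ is the last letter of $u_1$ and the first letter of $u_2$, the idempotency relation $pp\ap p$ collapses the middle block so that $u_1pu_2\ap u_1u_2=u$.

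The relation $\cev{u}u\cev{u}\ap\cev{u}$ follows by the identical argument applied to $\cev{u}\in\ltp$, using $\cev{\cev{u}}=u$. This establishes that $[\cev{u}]\in V([u])$; combined with the canonical-form reduction above, it shows that every $\rho$-class of $\foo$ is regular, so $\foo$ is a regular monoid. The only step that needs some care is bookkeeping the orientations — Lemma \ref{updownhill} is stated only for downhills, so the cancellation on the left half of $u\cev{u}u$ has to be obtained by applying it to $\cev{u_1}$ rather than to $u_1$ directly, and one must remember that concatenation of two hills that share an endpoint letter produces a harmless double occurrence absorbed by $pp\ap p$.
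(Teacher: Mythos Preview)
Your proof is correct and follows essentially the same approach as the paper's own argument: decompose $u=u_1*u_2$ at the peak, apply Lemma~\ref{updownhill} to the downhills $u_2$ and $\cev{u_1}$ to collapse $u_2\cev{u_2}$ and $\cev{u_1}u_1$ to the peak letter, and absorb the resulting idempotent. Your remarks on orienting the application of Lemma~\ref{updownhill} and on handling the doubled peak letter match the paper's treatment exactly.
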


\begin{proof}
By Corollary \ref{m}, $v\ap \be(v)$ for all $v\in \Gp$. Since $\be(v)\in \mxm\subseteq \ltp\cup\{1\}$, it is enough to prove the second part of this proposition to conclude also the first. Let $u\in\ltp$ with peak $g\in\G^5$. Then $u=u_1*u_2$ for some $u_1\in\lop$ and $u_2\in\lom$ such that $\tau(u_1)=g=\sigma(u_2)$. By Lemma \ref{updownhill}, $u_2\cev{u_2}\ap g\ap\cev{u_1}u_1$ (note that $\cev{u_1}\in\lom$ and its reverse is $u_1$). Hence
$$u\cev{u}u=u_1*u_2\,\cev{u_2}*\cev{u_1}\,u_1*u_2\ap u_1*g*g*u_2=u_1*u_2=u\,.$$
We show similarly that $\cev{u}u\cev{u}\ap \cev{u}$. Therefore, $[\cev{u}]$ is an inverse of $[u]$.
\end{proof}

The \emph{ground} $\ep(g)$ of a letter $g\in \G'$ is defined recursively as follows: 
$$\ep(1)=\{1\}\quad\mbox{ and }\quad\ep(g)=\ep(g^l)\cup\{g\} \cup\ep(g^r)$$ 
if $\up(g)\geq 1$. Thus $\up(g_1)<\up(g)$ for any $g_1\in\ep(g)\setminus\{g\}$. We define the relation $\preceq$ on $\G'$ by setting $h\preceq g$ if $h\in\ep(g)$. Due to statement $(i)$ of the following result, $\preceq$ is a partial order on $\G'$. We extend the notion of ground to any landscape by setting, for $u=g_0a_1g_1\cdots a_ng_n\in \lx$, $\ep(u)=\cup_{i=0}^n\ep(g_i)$. Again by statement $(i)$ of the following result, $\ep(u)$ is the union of the grounds of its ridges, and the highest letters of $\ep(u)$ are the peaks of $u$. In particular, $\ep(u)=\ep(\ka(u))$ if $u$ is a mountain.

\begin{lem}\label{ground}
Let $g,h\in \G'$ and $u,v,w\in\mxm$ such that $w=u\odot v$. Then:
\begin{itemize}
\item[$(i)$] $h\in\ep(g)$ \iff\ $\ep(h)\subseteq\ep(g)$.
\item[$(ii)$] If $h\in\ep(g)\setminus\{g\}$, then there exists $u=g_0a_1g_1\cdots a_ng_n\in \lop$ such that $n=\up(g)-\up(h)$, $g_i\in\ep(g)$ for all $0\leq i\leq n$, $g_0=h$ and $g_n=g$.
\item[$(iii)$] $\ep(u)\cup\ep(v)\subseteq\ep(w)$.
\end{itemize}
\end{lem}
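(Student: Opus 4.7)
The plan is to prove the three parts in order, using each to support the next, and to rely throughout on the fact that $\up(g_1)<\up(g)$ for every $g_1\in\ep(g)\setminus\{g\}$, which makes $\up$ a well-founded induction parameter. For (i), the implication $\ep(h)\subseteq\ep(g)\Rightarrow h\in\ep(g)$ is immediate from $h\in\ep(h)$. For the converse I would induct on $\up(g)$: the base $\up(g)=0$ forces $g=h=1$ and both grounds equal $\{1\}$. Otherwise either $h=g$ and we are done, or $h\in\ep(g^l)\cup\ep(g^r)$, in which case the inductive hypothesis applied to $g^l$ or $g^r$ yields $\ep(h)\subseteq\ep(g^l)\cup\ep(g^r)\subseteq\ep(g)$.

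For (ii), I would induct on $n=\up(g)-\up(h)\ge 1$. From $h\in\ep(g)\setminus\{g\}$ we have $h\in\ep(g^s)$ for some $s\in\{l,r\}$. If $n=1$ then $\up(h)=\up(g^s)$, which forces $h=g^s$ (since every other element of $\ep(g^s)$ has strictly smaller height), and $u=g^s\,g^{sa}\,g$ is a left-anchored uphill of length one satisfying the claim. If $n>1$ then $h\in\ep(g^s)\setminus\{g^s\}$ and $\up(g^s)-\up(h)=n-1$, so the induction hypothesis applied inside $\ep(g^s)$ supplies an uphill $g_0a_1g_1\cdots a_{n-1}g_{n-1}$ of length $n-1$ from $h$ to $g^s$ with all letters in $\ep(g^s)$; appending the left-anchored triplet $g^sg^{sa}g$ produces the required uphill of length $n$ inside $\ep(g^s)\cup\{g\}\subseteq\ep(g)$.

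For (iii), since $w=\be_2(u*v)$ is reached from $u*v$ by a finite sequence of river upliftings, and since $\ep(u*v)=\ep(u)\cup\ep(v)$, it suffices to show that each step $u_1\to v_1$ satisfies $\ep(u_1)\subseteq\ep(v_1)$. Around the uplifted river $g_i$ the affected subword is $g_{i-1}a_ig_ia_{i+1}g_{i+1}$. In the case $h_i\in\G^5$ the subword is replaced by $g_{i-1}a_ih_ia_{i+1}g_{i+1}$, and the defining conditions of $\G_{i,e}$ and $\G_{i,d}$ guarantee that $h_i^c=g_i\in\{h_i^{l^2},h_i^{lr}\}\subseteq\ep(h_i^l)\subseteq\ep(h_i)$, so (i) gives $\ep(g_i)\subseteq\ep(h_i)\subseteq\ep(v_1)$. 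In the case $h_i\notin\G^5$ we have $g_{i-1}=g_{i+1}$ and $g_i$ is an entry of $g_{i-1}$ (because the triplet $g_{i-1}a_ig_i$ is right anchored), hence $\ep(g_i)\subseteq\ep(g_{i-1})\subseteq\ep(v_1)$ again by (i); the removed letter $g_{i+1}=g_{i-1}$ contributes no new ground either. In both cases the letters of $v_1$ still cover the ground of $u_1$, and iterating this along the reduction chain from $u*v$ to $w$ completes the argument.

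The main subtlety, though routine, lies in case one of (iii): one must see not merely that $\ep(g_i)$ is covered by the neighboring letters, but that $g_i$ itself is recovered as an element of $\ep(h_i^l)$. This is precisely what the axiom $h_i^c\in\{h_i^{l^2},h_i^{lr}\}$ built into the definitions of $\G_e$ and $\G_d$ is designed to provide, so once that definition is unpacked, (iii) reduces to a single appeal to (i).
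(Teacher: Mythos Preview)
Your proofs of (i) and (ii) are essentially identical to the paper's: same induction parameters, same case splits, same constructions.

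For (iii) you take a genuinely different route. The paper argues in two lines: by Proposition~\ref{model}(i) the uphill $\la_l(u)$ is a prefix of $\la_l(w)$ and $\la_r(v)$ is a suffix of $\la_r(w)$, so the peaks $\ka(u),\ka(v)$ occur as letters of $w$; since $\ep$ of a mountain equals $\ep$ of its peak, part (i) finishes. You instead prove the stronger local fact that a single uplifting $u_1\to v_1$ never shrinks the ground, and then iterate along the reduction $u*v\xr{*}w$. Both arguments are correct. The paper's version is shorter because it cashes in work already done in Proposition~\ref{model}; yours is more self-contained and yields a reusable monotonicity lemma for $\ep$ under $\to$. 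Incidentally, your case analysis for $h_i\in\G^5$ can be simplified: since $g_{i-1}$ and $g_{i+1}$ remain in $v_1$ and $g_i$ is already an entry of $g_{i-1}$ (the triplet $g_{i-1}a_ig_i$ being right anchored), you get $\ep(g_i)\subseteq\ep(g_{i-1})\subseteq\ep(v_1)$ directly from (i) without unpacking the axioms for $h_i$.
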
 

\begin{proof}
$(i)$. Of course, we only need to prove that $\ep(h)\subseteq\ep(g)$ if $h\in\ep(g)$. This statement is proved by induction on $\up(g)$, and it is trivially true for $\up(g)=0$, that is, for $g=1$. Assume that $\ep(h_1)\subseteq\ep(g_1)$ for all $g_1\in\G'$ such that $\up(g_1)<\up(g)$ and all $h_1\in\ep(g_1)$. If $h\in\ep(g)$, then either $h=g$, or $h\in\ep(g^l)$, or $h\in\ep(g^r)$. In the latter two cases, we have either $\ep(h)\subseteq\ep(g^l)$ or $\ep(h)\subseteq\ep(g^r)$, respectively, by the induction hypothesis. Hence,
$$\ep(h)\subseteq \ep(g^r)\cup \{g\}\cup\ep(g^l)=\ep(g)$$
as wanted.

$(ii)$. Let $h\in\ep(g)\setminus\{g\}$. We prove $(ii)$ by induction on $n=\up(g)-\up(h)$. If $n=1$, then $h=g^s$ for some $s\in\{l,r\}$. Hence $u=hg^{sa}g$ is an uphill satisfying the conditions of $(ii)$. Assume now that $n\geq 2$. Then $h\in\ep(g^s)$ for some $s\in\{l,r\}$. Without loss of generality, we assume that $h\in\ep(g^l)$. Since $\up(g^l)-\up(h)=n-1$, there exists $g_0a_1g_1\cdots a_{n-1}g_{n-1}\in\lop$ such that all $g_i\in\ep(g^l)$ for $0\leq i\leq n-1$, $g_0=h$ and $g_{n-1}=g^l$, by the induction hypothesis. Clearly $u=g_0a_1g_1\cdots a_{n-1}g_{n-1}g^{la}g$ is now an uphill satisfying the conditions stated in $(ii)$.

$(iii)$. Note that $\ka(u),\ka(v)\in\ep(w)$ since $\la_l(u)$ is a prefix of $\la_l(w)$ and $\la_r(v)$ is a suffix of $\la_r(w)$ (Proposition \ref{model}.$(i)$). $(iii)$ follows now from $(i)$. 
\end{proof}

We describe next the partial orders $\leq_{\Rc}$, $\leq_{\Lc}$ and $\leq_{\Jc}$ on $(M^1,\odot)$.

\begin{prop}\label{desR}
Let $u,v\in \mxm$. Then:
\begin{itemize}
\item[$(i)$] $u\leq_{\Rc} v$ \iff\ $\la_l(v)$ is a prefix of $\la_l(u)$. 
\item[$(ii)$] $u\leq_{\Lc} v$ \iff\ $\la_r(v)$ is a suffix of $\la_r(u)$. 
\item[$(iii)$] $u\leq_{\Jc} v$ \iff\ $\ka(v)\preceq\ka(u)$. 
\end{itemize}
\end{prop}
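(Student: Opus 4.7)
The three forward directions fall out of the preceding machinery. For (i), if $u \leq_{\Rc} v$ then $u = v \odot w$ for some $w \in \mxm$, and Proposition \ref{model}(i) applied to the triple $(v,w,u)$ gives that $\la_l(v)$ is a prefix of $\la_l(u)$; (ii) is fully dual. For (iii), if $u = s \odot v \odot t$, then two applications of Lemma \ref{ground}(iii) yield $\ep(v) \subseteq \ep(s \odot v) \subseteq \ep(u) = \ep(\ka(u))$, so $\ka(v) \preceq \ka(u)$.

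For the converse of (i), my plan is to exhibit an explicit right divisor using the inverse $\cev{v}$ supplied by Proposition \ref{reg}. Assume $\la_l(v)$ is a prefix of $\la_l(u)$, so $\la_l(u) = \la_l(v) * w'$ for some uphill $w'$ from $\ka(v)$ to $\ka(u)$ (trivial when the two peaks coincide), and set $w = \cev{v} \odot u \in \mxm$ (a mountain, since $\cev{v} * u$ is a mountain range). I would verify $v \odot w = u$ by two successive collapses via Lemma \ref{updownhill}: first, the valley $\la_r(v) * \cev{\la_r(v)}$ inside $v * \cev{v}$ reduces to $\ka(v)$, giving $v * \cev{v} \ap \la_l(v) * \cev{\la_l(v)}$; second, applying the lemma to the downhill $\cev{\la_l(v)} \in \lom$ gives $\cev{\la_l(v)} * \la_l(v) \ap \ka(v)$. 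Chaining these,
$$ v * \cev{v} * u \;\ap\; \la_l(v) * \cev{\la_l(v)} * \la_l(v) * w' * \la_r(u) \;\ap\; \la_l(v) * w' * \la_r(u) \;=\; u, $$
so $v \odot w = \be(v * \cev{v} * u) = \be(u) = u$, proving $u \leq_{\Rc} v$. The converse of (ii) is entirely dual, taking $w = u \odot \cev{v}$ and collapsing $\cev{v} * v$ symmetrically.

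For the converse of (iii), the strategy is to reduce to (i) and (ii) by constructing a mountain $w$ squeezed between $u$ and $v$ in the $\Jc$-order. Given $\ka(v) \preceq \ka(u)$, Lemma \ref{ground}(ii) produces an uphill $w_1$ from $\ka(v)$ to $\ka(u)$ (taken trivial when $\ka(v) = \ka(u)$), and I would set $w = \la_l(u) * \cev{w_1} * \la_r(v)$. Because $\cev{w_1} * \la_r(v)$ is a single continuous downhill from $\ka(u)$ through $\ka(v)$ down to $1$, the word $w$ is a genuine mountain with $\la_l(w) = \la_l(u)$ and with $\la_r(v)$ as a suffix of $\la_r(w)$; so parts (i) and (ii) give $u \leq_{\Rc} w \leq_{\Lc} v$, whence $u \leq_{\Jc} v$. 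The main technical obstacle I anticipate is the chained calculation in the converse of (i): one must track endpoints carefully so that every $*$-concatenation is well-formed, and verify that no stray rivers survive these reductions so that $\be$ returns exactly $u$ with no further bookkeeping.
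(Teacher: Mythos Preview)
Your proof is correct. Parts $(i)$ and $(ii)$ match the paper's argument in spirit: the paper also applies Proposition~\ref{model}$(i)$ for the forward direction and uses Lemma~\ref{updownhill} for the converse, though it builds the witness directly as $w=\cev{\la_r(v)}*u_1$ (where $u_1$ is the tail of $u$ past $\la_l(v)$) and needs only one collapse, whereas your choice $w=\cev{v}\odot u$ routes through the full inverse and requires two. For $(iii)$ your route genuinely diverges: the paper exhibits two explicit mountains $w_1=\la_l(u)*\cev{u_1}*\cev{\la_l(v)}$ and $w_2=\cev{\la_r(v)}*u_1*\la_r(u)$ and checks directly that $w_1\odot v\odot w_2=u$, while you instead interpolate a single mountain $w=\la_l(u)*\cev{w_1}*\la_r(v)$ and invoke the already-established parts $(i)$ and $(ii)$ to get $u\leq_{\Rc}w\leq_{\Lc}v$. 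Your approach is a bit slicker, since it recycles the earlier statements rather than repeating a two-sided uplifting computation; the paper's approach, on the other hand, yields the explicit factorisation $u=w_1\odot v\odot w_2$, which is not visible from your argument.
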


\begin{proof}
We prove only $(i)$ and $(iii)$ since $(ii)$ is the dual of $(i)$.

$(i)$. Assume that $u\leq_{\Rc} v$. Thus $u=v\odot w$ for some $w\in\mxm$. By Proposition \ref{model}.$(i)$, $\la_l(v)$ is a prefix of $\la_l(u)$. Assume now that $\la_l(v)$ is a prefix $\la_l(u)$. Then $u=\la_l(v)*u_1$ for some landscape $u_1$ with $\tau(\la_l(v))=\ka(v)=\si(u_1)$. Let $v_1=\la_r(v)$ and consider $w=\cevm{v_1}*u_1$. Clearly $w\in\mxm$. Further, $v_1*\cevm{v_1}\xr{*} \ka(v)$ by Lemma \ref{updownhill}, and so
$$v*w=\la_l(v)*v_1*\cevm{v_1}*u_1\xr{*}\la_l(v)*\ka(v)*u_1=\la_l(v)*u_1=u\,,$$
that is, $u=v\odot w$ and $u\leq_{\Rc} v$. 

$(iii)$. If $u\leq_{\Jc} v$, then $u=w_1\odot v\odot w_2$ for two mountains $w_1$ and $w_2$. By Lemma \ref{ground}.$(iii)$, $\ep(\ka(v))=\ep (v)\subseteq\ep(u)=\ep(\ka(u))$ and $\ka(v)\preceq\ka(u)$. Conversely, if $\ka(v)\preceq\ka(u)$, then let $u_1$ be an uphill such that $\si(u_1)=\ka(v)$ and $\tau(u_1)=\ka(u)$, whose existence is guaranteed by Lemma \ref{ground}.$(ii)$. Observe that $w_1=\la_l(u)*\cevm{u_1}*\cevl{\la_l(v)}$ and $w_2=\cevl{\la_r(v)}*u_1*\la_r(u)$ are well defined mountains. Further,
$$\begin{array}{ll}
w_1*v*w_2\!\!\! & =\la_l(u)*\cevm{u_1}*(\cevl{\la_l(v)}*\la_l(v))*(\la_r(v)*\cevl{\la_r(v)})*u_1*\la_r(u) \\ [.2cm]
& \xr{*} \la_l(u)*\cevm{u_1}*\ka(v)*\ka(v)*u_1*\la_r(u) \\ [.2cm]
& \xr{*} \la_l(u)*\ka(u)*\la_r(u)=u\,,
\end{array}$$
and $w_1\odot v\odot w_2=u$. Thus $u\leq_{\Jc} v$.
\end{proof}

The next two results are corollaries of the previous proposition. The first one is an obvious consequence. For the second one, we need to say something more about the $\Hc$ and the $\Dc$ relations.

\begin{cor}\label{covers}
Let $u,v\in \mxm$. Then:
\begin{itemize}
\item[$(i)$]  $v$ covers $u$ for $\leq_{\Rc}$ \iff\ $\la_l(u)=\la_l(v)a\ka(u)$ for some $a\in A$.
\item[$(ii)$] $v$ covers $u$ for $\leq_{\Lc}$ \iff\ $\la_r(u)=\ka(u)a\la_r(v)$ for some $a\in A$.
\item[$(iii)$] $v$ covers $u$ for $\leq_{\Jc}$ \iff\ $\ka(v)\in\{(\ka(u))^l,(\ka(u))^r\}$.
\end{itemize}
\end{cor}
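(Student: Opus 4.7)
This corollary is a direct consequence of Proposition \ref{desR}. The strategy is to translate each cover condition into the combinatorial characterisation of the corresponding quasi-order, and then identify the minimal steps that separate strictly comparable elements. In each case, the only thing to check beyond a purely combinatorial statement is that the intermediate combinatorial objects lift to actual mountains.

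For (i), by Proposition \ref{desR}(i), $u<_{\Rc}v$ means that $\la_l(v)$ is a strict prefix of $\la_l(u)$. Writing $\la_l(u)=g_0a_1g_1\cdots a_ng_n$ with $g_n=\ka(u)$, each intermediate prefix $u_k=g_0a_1\cdots a_kg_k$ is itself an uphill from $1$ and equals $\la_l(u_k*\cevl{u_k})$; so every strict intermediate prefix lifts to a genuine mountain lying strictly between $u$ and $v$ for $\leq_{\Rc}$. Hence $v$ covers $u$ iff $\la_l(v)$ is obtained from $\la_l(u)$ by deleting exactly the final anchor–letter pair, i.e. $\la_l(u)=\la_l(v)a\ka(u)$ for some $a\in A$. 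Statement (ii) follows dually from Proposition \ref{desR}(ii), the same lifting being applied to suffixes of $\la_r(u)$.

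For (iii), by Proposition \ref{desR}(iii), $u<_{\Jc}v$ iff $\ka(v)\prec\ka(u)$ in the partial order $\preceq$ on $\G'$. Any $h\in\G'$ is the peak of a mountain (take $\be_1(h)$ when $h\neq 1$ and the trivial mountain otherwise), so elements of $\G'$ strictly between $\ka(v)$ and $\ka(u)$ for $\preceq$ lift to mountains strictly between $u$ and $v$ for $\leq_{\Jc}$. Therefore $v$ covers $u$ iff $\ka(u)$ covers $\ka(v)$ in $\preceq$. If $\ka(v)\in\{\ka(u)^l,\ka(u)^r\}$, then $\up(\ka(u))-\up(\ka(v))=1$ forbids any letter of strictly intermediate height, so the cover holds. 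Conversely, if $\ka(v)\prec\ka(u)$ but $\ka(v)\notin\{\ka(u)^l,\ka(u)^r\}$, then the recursive definition of $\ep$ places $\ka(v)$ in $\ep(\ka(u)^l)\setminus\{\ka(u)^l\}$ or in $\ep(\ka(u)^r)\setminus\{\ka(u)^r\}$, producing a strictly intermediate element of $\G'$.

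The only delicate point — hardly an obstacle — is confirming that these combinatorial intermediates are indeed realised as mountains witnessing strict intermediacy in the Green preorders; the two constructions $u'\mapsto u'*\cevl{u'}$ and $h\mapsto \be_1(h)$ handle this uniformly, so the three equivalences fall out at once from Proposition \ref{desR}.
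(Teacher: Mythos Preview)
Your argument is correct and matches the paper's approach: the paper states this corollary as ``an obvious consequence'' of Proposition~\ref{desR} and gives no proof, so you have simply spelled out the details the author omitted. Your lifting constructions $u_k\mapsto u_k*\cevl{u_k}$ and $h\mapsto\be_1(h)$ are exactly the right way to realise intermediate combinatorial data as genuine mountains, and your analysis of covers in $(\G',\preceq)$ via the recursive definition of $\ep$ is sound.
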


\begin{cor}\label{R}
Let $u,v\in\mxm$. Then:
\begin{itemize}
\item[$(i)$] $u\Rc v$ \iff\ $\la_l(u)=\la_l(v)$.
\item[$(ii)$] $u\Lc v$ \iff\ $\la_r(u)=\la_r(v)$.
\item[$(iii)$] $u\Jc v$ \iff\ $\ka(u)=\ka(v)$.
\item[$(iv)$] $u\Hc v$ \iff\ $u=v$.
\item[$(v)$] $\Dc =\Jc$.
\end{itemize}
\end{cor}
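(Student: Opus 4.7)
The plan is to derive all five parts of Corollary \ref{R} directly from Proposition \ref{desR}, together with the observation that a mountain is uniquely reconstructed from its left and right hills joined at the peak.

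First, parts $(i)$, $(ii)$ and $(iii)$ are immediate from the descriptions of $\leq_{\Rc}$, $\leq_{\Lc}$ and $\leq_{\Jc}$ given in Proposition \ref{desR}. Indeed, $u\Rc v$ means $\la_l(v)$ is a prefix of $\la_l(u)$ and $\la_l(u)$ is a prefix of $\la_l(v)$, forcing $\la_l(u)=\la_l(v)$; dually for $(ii)$. For $(iii)$, $u\Jc v$ gives $\ka(u)\preceq\ka(v)$ and $\ka(v)\preceq\ka(u)$, and the antisymmetry of $\preceq$ follows from Lemma \ref{ground}.$(i)$ (if $h\in\ep(g)$ and $g\in\ep(h)$, then $\ep(h)=\ep(g)$, and comparing heights of the maximum elements of these grounds forces $g=h$).

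Next, for $(iv)$, I would combine $(i)$ and $(ii)$: if $u\Hc v$, then $\la_l(u)=\la_l(v)$ and $\la_r(u)=\la_r(v)$, whence $u=\la_l(u)*\la_r(u)=\la_l(v)*\la_r(v)=v$ by the canonical decomposition of a nontrivial mountain (and trivially for the trivial mountain). The converse is obvious.

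Finally, for $(v)$, since $\Dc=\Lc\vee\Rc$ is always contained in $\Jc$, only the reverse inclusion needs work. Given $u\Jc v$, part $(iii)$ yields $\ka(u)=\ka(v)$, so the word $w=\la_l(u)*\la_r(v)$ is well defined with $\tau(\la_l(u))=\ka(u)=\ka(v)=\si(\la_r(v))$; it is therefore a mountain with $\la_l(w)=\la_l(u)$ and $\la_r(w)=\la_r(v)$. By $(i)$ and $(ii)$, $w\Rc u$ and $w\Lc v$, so $u\Dc v$. The only subtle point here is checking that the concatenation at the common peak really produces a mountain (an uphill followed by a downhill with a single ridge on top), which is immediate from $\ka(u)=\ka(v)$. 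No step poses real difficulty; the whole corollary is a clean bookkeeping consequence of Proposition \ref{desR} and the structure of mountains described after Corollary \ref{m}.
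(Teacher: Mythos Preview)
Your proposal is correct and follows essentially the same approach as the paper's own proof: parts $(i)$--$(iii)$ are read off from Proposition \ref{desR}, part $(iv)$ from $(i)$ and $(ii)$ via $u=\la_l(u)*\la_r(u)$, and part $(v)$ by building the intermediate mountain $w=\la_l(u)*\la_r(v)$ when $\ka(u)=\ka(v)$. The only difference is that you spell out a few details the paper leaves implicit (the antisymmetry of $\preceq$ and the reconstruction of a mountain from its two hills).
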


\begin{proof}
The first three statements follow from the corresponding statements of Proposition \ref{desR}. Then $(iv)$ is a consequence of $(i)$ and $(ii)$. So, we only need to prove that $\Jc\subseteq\Dc$. Assume that $u\Jc v$. Then $w=\la_l(u)*\la_r(v)\in \mxm$ since $\ka(u)=\ka(v)$ by $(iii)$. Now, by $(i)$ and $(ii)$, we conclude that $u\Rc w\Lc v$, whence $u\Dc v$.
\end{proof}

The statements $(iii)$ and $(v)$ of the previous corollary tell us that the $\Dc=\Jc$-classes of $\foo$ are in one-to-one correspondence with the elements of $\G'$, and that the set $\{[g]\,|\;g\in\G'\}$ is a transversal (or cross-section) for the set of $\Dc$-classes of $\foo$. The next result gives us the size of each $\Rc$, $\Lc$ and $\Dc$-class of $\foo$.

\begin{cor}\label{size}
If $g\in \G_n$ for $n\geq 1$, then $|\Rcc_{[g]}|=2^n=|\Lcc_{[g]}|$ and $|\Dcc_{[g]}|=2^{2n}$. Further, $\Dcc_{[g]}$ has $2^n$ $\Rc$-classes and $2^n$ $\Lc$-classes. 
\end{cor}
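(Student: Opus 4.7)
The plan is to combine the structural description of mountains with the characterization of Green's relations given by Corollary \ref{R} and the counting of hills given by Corollary \ref{counthills}.

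First I would count $|\Dcc_{[g]}|$. By Corollary \ref{R}.$(iii)$ and $(v)$, $\Dcc_{[g]}$ is in bijection (via canonical forms) with the set of mountains $u\in\mxm$ whose peak satisfies $\ka(u)=g$. Every such nontrivial mountain decomposes uniquely as $u=\la_l(u)*\la_r(u)$ where $\la_l(u)$ is an uphill from $1$ to $g$ and $\la_r(u)$ is a downhill from $g$ to $1$; conversely, any pair consisting of an uphill from $1$ to $g$ and a downhill from $g$ to $1$ yields a mountain with peak $g$ via the $*$-operation, and distinct pairs give distinct mountains because a mountain determines both of its hills. Corollary \ref{counthills} then gives $2^n$ choices for $\la_l(u)$ and $2^n$ choices for $\la_r(u)$, hence $|\Dcc_{[g]}|=2^{2n}$.

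Next I would count $|\Rcc_{[g]}|$. By Corollary \ref{R}.$(i)$, $\Rcc_{[g]}$ corresponds to the set of mountains $u\in\mxm$ with $\la_l(u)=\la_l(\be(g))$. Since any such $u$ automatically has $\ka(u)=\tau(\la_l(\be(g)))=g$, the only remaining freedom is the choice of $\la_r(u)$, which can be any of the $2^n$ downhills from $g$ to $1$. Thus $|\Rcc_{[g]}|=2^n$, and by the dual argument $|\Lcc_{[g]}|=2^n$.

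Finally, since $\Dcc_{[g]}$ is the disjoint union of its $\Rc$-classes and each $\Rc$-class contained in $\Dcc_{[g]}$ has cardinality $2^n$ (all elements of a given $\Dc$-class sit inside $\Dc$-classes of the same size, as they are all $\Dc$-equivalent to $[g]$), we obtain that $\Dcc_{[g]}$ contains exactly $2^{2n}/2^n=2^n$ $\Rc$-classes, and by the same argument $2^n$ $\Lc$-classes. There is no real obstacle here: the argument is a direct bookkeeping exercise once the unique decomposition $u=\la_l(u)*\la_r(u)$ of a mountain into its two hills is invoked, together with Corollaries \ref{R} and \ref{counthills}.
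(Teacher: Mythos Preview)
Your proof is correct and uses essentially the same ingredients as the paper's: Corollary \ref{R} to translate Green's classes into conditions on hills, and Corollary \ref{counthills} to count those hills. The only organizational difference is that the paper computes $|\Rcc_{[g]}|$ and $|\Lcc_{[g]}|$ first and then deduces $|\Dcc_{[g]}|$ via $\Hc$-triviality (Corollary \ref{R}.$(iv)$), whereas you count $|\Dcc_{[g]}|$ directly via the uphill/downhill decomposition and then divide; both routes are immediate. One small wording slip: in your final paragraph the parenthetical should say that all elements of $\Dcc_{[g]}$ sit inside $\Rc$-classes (not $\Dc$-classes) of the same size.
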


\begin{proof}
We only need to prove that $|\Rcc_{[g]}|=2^n$ since $|\Lcc_{[g]}|=2^n$ follows by duality and the statements about $\Dc$ follow from the statements about $\Rc$ and $\Lc$ and from Corollary \ref{R}.$(iv)$. But Corollary \ref{R}.$(i)$ tells us that the size of $\Rcc_{[g]}$ is equal to the number of downhills from $g$ to $1$, and this number is $2^n$ due to Corollary \ref{counthills}. Hence, $|\Rcc_{[g]}|=2^n$.
\end{proof}

We know that $[g]\in S([g^Rg^c],[g^cg^L])$ by Lemma \ref{prodelem}.$(vi)$. In the next result we prove that $[g]$ is the only element of $S([g^Rg^c],[g^cg^L])$ in $\foo$.

\begin{lem}\label{sandwich}
For each $g\in \G^5$ with $\up(g)\geq 2$, $S([g^Rg^c],[g^cg^L])=\{[g]\}\,$ in $\foo$.
\end{lem}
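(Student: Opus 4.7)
Let $[h] \in S([g^Rg^c], [g^cg^L])$, and denote by $u = \be(h)$ its canonical mountain; the goal is to show $u = \be_1(g)$, equivalently $[h] = [g]$.

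\emph{Pinning the peak.} By the sandwich-set definition, $[h] \in V([g^Rg^c][g^cg^L]) = V([g^Rg^cg^L])$, and a pair of mutually inverse elements of any semigroup lie in a common $\Dc$-class (via the idempotents $[h][g^Rg^cg^L]$ and $[g^Rg^cg^L][h]$). Hence $[h] \Dc [g^Rg^cg^L]$, and by Corollary \ref{R}.$(iii),(v)$, $\ka(u) = \ka(\be(g^Rg^cg^L))$. A direct computation of $\be(g^Rg^cg^L)$ via skip-upliftings of $\be_1(g^Rg^cg^L)$ --- which collapse the dummy $1$-bridges separating the $\be_1$-mountains of $g^R$, $g^c$ and $g^L$, and uplift the central river to a 5-tuple matching the defining pattern of $g$ --- shows $\ka(u) = g$.

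\emph{Constraining the hills.} From the sandwich conditions $[g^cg^L][h] = [h]$ and $[h][g^Rg^c] = [h]$, Proposition \ref{desR} gives that $\la_l(\be(g^cg^L))$ is a prefix of $\la_l(u)$ and $\la_r(\be(g^Rg^c))$ is a suffix of $\la_r(u)$. A parallel reduction of $\be_1(g^cg^L)$ identifies this prefix as $\la_l(\be_1(g))$ stripped of its last two letters $g^{la}, g$; dually, $\la_r(\be(g^Rg^c))$ is $\la_r(\be_1(g))$ stripped of its first two letters $g, (g^{ra})'$. Therefore $\la_l(u)$ is an uphill from $1$ to $g$ whose initial portion up to the vertex $g^l$ already matches $\la_l(\be_1(g))$; only the final anchor linking $g^l$ to $g$ remains to be pinned down, and dually for $\la_r(u)$.

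\emph{Forcing the last anchors.} If $g \in \G_d$, then $g^l \neq g^r$ and the only anchor $a$ making $g^l \cdot a \cdot g$ left-anchored is $a = g^{la}$; the extension is unique, and symmetrically for $\la_r(u)$, so $u = \be_1(g)$. If $g \in \G_e$, both $g^{la}$ and $g^{ra}$ are admissible at the top of each hill, yielding four candidate mountains of which exactly one is $\be_1(g)$; the other three are ruled out using the idempotency $u \odot u = u$. In $u * u$ the bridging $1$-vertex uplifts to some 5-tuple of $\G_{2,e}$, which --- unless the chosen top anchors match those of $\be_1(g)$ --- triggers a cascade of ridge-upliftings producing elements of $\G_{3,e}, \G_{4,e}, \ldots$ and yielding after $\be_2$ a canonical mountain of peak strictly above $g$, contradicting $u \odot u = u$. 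The main obstacle is this cascade argument, handled by induction on $\up(g)$ and leveraging the recursive description $\G_{i,e} = \{(h, (h^{la})', h^l, (h^{ra})', h),\; (h, (h^{ra})', h^l, (h^{la})', h) : h \in \G_{i-1,e}\}$, which is calibrated precisely to the anchors of $\be_1(g)$.
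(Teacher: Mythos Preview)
Your three-part strategy (pin the peak via $\Dc$, constrain the hills via $\leq_{\Rc}$ and $\leq_{\Lc}$, then force the last anchors via idempotency) is exactly the paper's approach, and the $\G_d$ case is handled correctly. Two points deserve attention.

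In \emph{Constraining the hills} you assert that $\la_l(\be(g^cg^L))$ equals $\la_l(\be_1(g))$ with its last two letters removed. This is true, but not as immediate as you suggest when $g^{la}\neq 1$, since $\be_1(g^cg^L)$ then ends in a height-$1$ mountain $\be_1(g^{la})$ that must be absorbed. The paper avoids computing $\be(g^cg^L)$ altogether: it sets $v=\la_l(g^c)\,g^{l_aa}\,\la_r(g^l)$, observes via Lemma~\ref{valelem}.(ii) that $g^c g^{l_aa}g^l\ap v$, and then writes $u\ap g^cg^L u\ap v\,g^{la}\,u\ap v\odot\be_1(g^{la})\odot u$, so that Proposition~\ref{model}.(i) forces $\la_l(v)=\la_l(g^c)\,g^{l_aa}\,g^l$ to be a prefix of $\la_l(u)$.

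The real gap is in the $\G_e$ case of \emph{Forcing the last anchors}. Your picture of a cascade launched from the bottom $1$-vertex and ``producing elements of $\G_{3,e},\G_{4,e},\ldots$'' is not how the contradiction arises. The bottom of the valley $\la_r(u)*\la_l(u)$ is $\la_r(g^c)*\la_l(g^c)$, which collapses to $g^c$ by Lemma~\ref{rhoe}.(ii) \emph{regardless} of the top anchors $a_1,a_2$; one further uplifting of this $g^c$-river always yields $g$ itself, giving $u^2\xr{*}\la_l(u)*\big(g\,a_2'\,g^r\,g^{ra}\,g\,(g^{la})'\,g^l\,a_1\,g\big)*\la_r(u)$. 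Only now do $a_1,a_2$ matter, and the contradiction is a single step: if $a_2=g^{la}$ (the wrong choice), uplifting the river $g^r$ in $g\,a_2'\,g^r\,g^{ra}\,g$ produces $g_1=(g,(g^{ra})',g^r,(g^{la})',g)\in\G_{\up(g)+1,e}$, so $g_1\in\ep(\be(u^2))=\ep(u)=\ep(g)$, which is impossible since $\up(g_1)>\up(g)$. The same argument with the other river handles $a_1$. There is no inductive cascade and no induction on $\up(g)$; the recursive description of $\G_{i,e}$ is used only to recognise that $g_1\in\G_e$.
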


\begin{proof}
Let $u\in\mxm$ be such that $[u]\in S([g^Rg^c],[g^cg^L])$. Then $\ka(u)=g$ by Corollary \ref{R} since $[u]\Dc [g]$. Let $v$ be the mountain $\la_l(g^c)g^{l_aa}\la_r(g^l)$. By Lemma \ref{valelem}.$(ii)$, $\be_1(g^cg^{l_aa}g^l)\xr{*} v$, and so $g^cg^{l_aa}g^l\ap v$. Hence
$$u\ap g^cg^Lu\ap vg^{la}u\ap v*\be_1(g^{la})* u\ap v\odot\be_1(g^{la})\odot u\,,$$
and so $u=v\odot\be_1(g^{la})\odot u\,$. By Proposition \ref{model}.$(i)$, $\la_l(v)=\la_l(g^c)g^{l_aa}g^l$ is a prefix of $\la_l(u)$. In a similar way, we conclude also that $g^r(g^{r_aa})'\la_r(g^c)$ is a suffix of $\la_r(u)$.

If $g\in\G_d$, then $g^l\neq g^r$. Hence, for this case, $\la_l(g)$ is the only uphill from $1$ to $g$ with prefix $\la_l(g^c)g^{l_aa}g^l$ and $\la_r(g)$ is the only downhill from $g$ to $1$ with suffix $g^r(g^{r_aa})'\la_r(g^c)$. We have concluded that $\la_l(g)=\la_l(u)$ and $\la_r(g)=\la_r(u)$ since $\ka(u)=g$. Thus, $u=\be_1(g)$ and $[u]=[g]$ if $g\in\G_d$. 

The case $g\in\G_e$ is more complex since $g^l=g^r$, and thus there are two distinct uphills from $1$ to $g$, namely $\la_l(g)$ and $\la_l(g^c)g^{l_aa}g^lg^{ra}g$. There are also two distinct downhills form $g$ to $1$, namely $\la_r(g)$ and $g(g^{la})'g^r(g^{r_aa})'\la_r(g^c)$. We have concluded that 
$$u=\la_l(g^c)g^{l_aa}g^la_1ga_2'g^r(g^{r_aa})'\la_r(g^c)\,,$$
for $a_1,a_2\in\{g^{ra},g^{la}\}$. However, $[u]$ is also an idempotent of $\foo$. Hence,
$$\begin{array}{ll}
u\ap u^2\hspace*{-.1cm}& =\; \la_l(u)*\la_r(u)*\la_l(u)*\la_r(u)\\ [.2cm]
&\xr{*}\; \la_l(u)*ga_2'g^r(g^{r_aa})'g^cg^{l_aa}g^la_1g*\la_r(u) \\ [ .2cm]
&\xr{g^c}\; \la_l(u)*ga_2'g^rg^{ra}g(g^{la})'g^la_1g*\la_r(u) \,.
\end{array}$$
If $a_2=g^{la}$, then $g_1=(g,(g^{ra})',g^r,(g^{la})',g)\in\G_e$ and 
$$ga_2'g^rg^{ra}g\xr{g^r} ga_2'g_1g^{ra}g\,.$$ 
So, $g_1\in\ep(\be(u^2))=\ep(u)=\ep(g)$ and $\up(g_1)=\up(g)+1$, which is a contradiction. Therefore $a_2$ must be equal to $g^{ra}$. Similarly, we conclude also that $a_1=g^{la}$. Consequently $\la_l(u)=\la_l(g)$ and $\la_r(u)=\la_r(g)$, that is, $u=\be_1(g)$ and $[u]=[g]$ if $g\in\G_e$ also. We have proved the statement of this proposition.
\end{proof}

We have now all the ingredients necessary to show that $\fo$ is weakly generated by $[x]$.

\begin{prop}\label{weakly}
The regular semigroup $\fo$ is weakly generated by $[x]$.
\end{prop}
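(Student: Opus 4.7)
The plan is to show that any regular subsemigroup $T$ of $\fo$ containing $[x]$ must coincide with $\fo$, which I would carry out in three steps.

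\textbf{Step 1.} First I would produce $[x']$, $[g_{xx'}]$, and $[x'x]$ inside $T$. Since $T$ is regular and $[x]\in T$, there is some $y\in T$ that is an inverse of $[x]$. Then $[x]y$ is an idempotent in $\Rcc_{[x]}$ and $y[x]$ is an idempotent in $\Lcc_{[x]}$, both lying in $T$. Combining Corollaries \ref{R} and \ref{size}, the $\Dc$-class of $[x]$ consists of the four mountains with peak $g_{xx'}$, namely $[g_{xx'}],[x],[x'],[x'x]$, and among these only $[g_{xx'}]$ and $[x'x]$ are idempotents (the squares of $[x]$ and $[x']$ are mountains of height $2$). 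Thus $[x]y=[g_{xx'}]$ and $y[x]=[x'x]$, placing these two idempotents in $T$. Moreover $y$ lies in $\Rcc_{[x'x]}\cap\Lcc_{[g_{xx'}]}$, which by Corollary \ref{R}(i)--(ii) is the one-element set consisting of the unique mountain with left hill $1x'g_{xx'}$ and right hill $g_{xx'}11$, namely $[x']$. So $y=[x']\in T$.

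\textbf{Step 2.} Next I would prove by induction on $\up(g)$ that $[g]\in T$ for every $g\in\G^5$. The base case $\up(g)=1$ is $g=g_{xx'}$, handled in Step 1. For the inductive step with $\up(g)=i\geq 2$, the inductive hypothesis together with Step 1 give $[g^l],[g^r]\in T$; moreover $[g^c]\in T$ when $i\geq 3$, while $g^c=1$ when $i=2$. Since the anchors $(g^{ra})',g^{ra},(g^{la})',g^{la}$ all lie in $A=\{1,x,x'\}$ and $[1]$ is the identity of $\foo$, the factorizations
\[
[g^Rg^c]=[(g^{ra})']\,[g^r]\,[g^{ra}]\,[g^c]\qquad\text{and}\qquad [g^cg^L]=[g^c]\,[(g^{la})']\,[g^l]\,[g^{la}]
\]
exhibit $[g^Rg^c]$ and $[g^cg^L]$ as products whose non-identity factors all lie in $T$, so both elements are in $T$. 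By Lemma \ref{prodelem}(v) they are idempotents, and by Lemma \ref{prodelem}(vi), $[g]$ belongs to the sandwich set $S([g^Rg^c],[g^cg^L])$ computed in $\fo$. The sandwich set computed inside the regular semigroup $T$ is non-empty and, directly from its defining conditions, is contained in the sandwich set computed inside $\fo$; hence by Lemma \ref{sandwich} it must equal $\{[g]\}$, forcing $[g]\in T$.

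\textbf{Step 3.} Finally I would conclude $T=\fo$. An arbitrary element of $\fo$ is $[u]$ for some $u\in\Gp$ whose content is not $\{1\}$. Writing $u=h_1h_2\cdots h_k$ with $h_j\in\G=A\cup\G^5$, we get $[u]=[h_1]\cdots[h_k]$ in $\foo$; after dropping those $[h_j]$ equal to $[1]$ we obtain a non-empty product of elements from $\{[x],[x']\}\cup\{[g]:g\in\G^5\}\subseteq T$. Hence $[u]\in T$ and $T=\fo$.

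The one delicate point is the sandwich-set argument in Step 2: one must be careful to record that the sandwich set computed inside $T$ sits inside the one computed inside $\fo$, so that the uniqueness delivered by Lemma \ref{sandwich} actually forces $[g]$ into $T$. Steps 1 and 3 are essentially bookkeeping with the Green's structure of the small $\Dc$-class of $[x]$ and with the $\G$-letter decomposition of an arbitrary word in $\Gp$.
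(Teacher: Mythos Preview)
Your proof is correct and follows essentially the same route as the paper: identify the four elements of $\Dcc_{[x]}$ inside $T$ (in particular the unique inverse $[x']$), then induct on $\up(g)$ using Lemma \ref{sandwich} and the regularity of $T$ to force each $[g]$ into $T$, and finally use that $\{[x],[x']\}\cup\{[g]:g\in\G^5\}$ generates $\fo$. Your Step 1 is slightly more explicit than the paper's (which simply asserts that $[x']$ is the only inverse of $[x]$ via Corollary \ref{R}), and you rightly flag the one subtle point---that the sandwich set in $T$ embeds in the sandwich set in $\fo$---which the paper leaves implicit.
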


\begin{proof}
Let $S$ be a regular subsemigroup of $\fo$ containing $[x]$. By Corollary \ref{R},
$$\Dcc_{[x]}=\{[x],[x'],[g_{xx'}],[x'x]\}\,,$$
and $[x']$ is the only inverse of $[x]$. Hence, $S$ contains $\Dcc_{[x]}$. If $g\in\G_2$, then $g^c=1$ and $[g]$ is the only element of $S([g^Rg^c],[g^cg^L])= S([g^R],[g^L])$. Note that both $[g^R]$ and $[g^L]$ belong to $S$ since $g^r=g^l=g_{xx'}$. Since $S$ is regular, it must contain also an element from $S([g^R],[g^L])$, that is, it must contain $[g]$. Next, we show that $S$ contains the set $\{[g]\,|\;g\in\G^5\}$ by induction on $\up(g)$.

Let $g\in\G_i$ for $i\geq 3$, and assume that $[h]$ belongs to $S$ for all $h\in\G_j$ with $j<i$. Then $[g^l]$, $[g^r]$ and $[g^c]$ belong to $S$ by the induction hypothesis (note that $g^c\neq 1$ because $\up(g)\geq 3$). Since $[x]$ and $[x']$ also belong to $S$, we conclude that $[g^Rg^c]\in S$ and $[g^cg^L]\in S$. Again, we know that
$$S([g^Rg^c],[g^cg^L])\cap S\neq\emptyset$$
because $S$ is regular; whence $[g]\in S$ due to Lemma \ref{sandwich}. We have shown that $S$ contains
$$\{[x],[x']\}\cup\{[g]\,|\;g\in\G^5\}\,.$$
Since this latter set generates $\fo$, we must have $S=\fo$. Therefore, $\fo$ is weakly generated by $[x]$.
\end{proof}

In this section we have already described the Green's relations on $\fo$. We end it describing the idempotents, the inverses and the natural partial order on $\fo$. A \emph{canyon} is a valley $w$ with $\si(w)=\tau(w)$ and a \emph{gorge} is a canyon such that $w\xr{*}\si(w)=\tau(w)$. Thus $\be_2(w)=\si(w)$ for any gorge $w$, and $w_1\xr{*}\si(w)$ for any $w_1\in\G^+$ such that $w\xr{*} w_1$. Note however that $w_1$ may not be a gorge since we cannot guarantee that $w_1$ is even a valley. We will use gorges to characterize the idempotents, the inverses and the natural partial order, but this characterization will be just theoretical. For practical purposes, it is hard to identify the gorges, and consequently also the idempotents, the inverses and the natural partial order.

\begin{prop}\label{idgorges}
Let $u,v\in\mxm$.
\begin{itemize}
\item[$(i)$] $u$ is an idempotent \iff\ the canyon $\la_r(u)*\la_l(u)$ is a gorge.
\item[$(ii)$] $v$ is an inverse of $u$ \iff\ $\la_r(u)*\la_l(v)$ and $\la_r(v)*\la_l(u)$ are both gorges.
\item[$(iii)$] $v<u$ \iff\ $\la_l(v)=\la_l(u)\,a_1\,u_1$ and $\la_r(v)=u_2\,a_2\, \la_r(u)$ for some $u_1\in\lop\cup\G^5$, $u_2\in\lom\cup\G^5$ and $a_1,a_2\in A$ such that $u_2\,a_2\,\ka(u)\,a_1\,u_1$ is a gorge.
\end{itemize}
\end{prop}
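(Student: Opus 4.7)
The plan is to base all three parts on the single principle that $u \odot v = \be_2(u*v)$ combined with the local confluence of $\to$ (Lemma \ref{to}), so that each claim reduces to a question about how the canyon sitting between two mountains collapses under iterated uplifting. I would dispatch (i) first, then adapt the argument to a two-canyon configuration for (ii), and finally combine both in (iii).

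For (i), write $u \odot u = \be_2(\la_l(u) * w * \la_r(u))$ with $w = \la_r(u) * \la_l(u)$; this $w$ is a canyon whose peak endpoints are $\ka(u)$. If $w$ is a gorge, then by confluence I may perform the upliftings lying entirely inside $w$ first, collapsing $w$ to $\ka(u)$ and producing $u \odot u = \be_2(\la_l(u)*\ka(u)*\la_r(u)) = u$. Conversely, if $u$ is idempotent, reducing $w$ in isolation yields a river-free landscape $\be_2(w)$ with endpoints $\ka(u)$; at each junction the endpoint $\ka(u)$ is a ridge against the outer hills (which enter going up and leave going down), so $\la_l(u) * \be_2(w) * \la_r(u)$ is already river-free and therefore equals $u \odot u = u$, forcing $\be_2(w) = \ka(u)$ by a length comparison. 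For (ii) the same template applies to the two canyons $w_1 = \la_r(u) * \la_l(v)$ and $w_2 = \la_r(v) * \la_l(u)$, genuinely canyons because $v \in V(u)$ forces $\ka(u) = \ka(v)$ via Corollary \ref{R}.$(iii)$. The equations $uvu = u$ and $vuv = v$ translate, by the same outer-hills-stay argument, into both $w_1$ and $w_2$ being gorges, and the converse is immediate by reducing them first.

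For (iii), I would start from the standard characterisation: $v < u$ iff $v \ne u$ and $v = eu = uf$ for some idempotents $e, f$ with $e \Rc v \Lc f$. By Corollary \ref{R}, $\la_l(e) = \la_l(v)$ and $\la_r(f) = \la_r(v)$, while Proposition \ref{model}.$(i)$ applied to $v = e \odot u$ and $v = u \odot f$ forces $\la_l(u)$ to be a prefix of $\la_l(v)$ and $\la_r(u)$ to be a suffix of $\la_r(v)$; reading these in the alternating hill format produces the decompositions $\la_l(v) = \la_l(u)\,a_1\,u_1$ and $\la_r(v) = u_2\,a_2\,\la_r(u)$ of the statement. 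To extract the gorge condition, I would expand $v = euf$ and, via confluence, reduce the two interior canyons $\la_r(e)*\la_l(u)$ and $\la_r(u)*\la_l(f)$ first; using that $e, f$ are idempotents, so by (i) their own canyons $\la_r(e)*\la_l(v)$ and $\la_r(v)*\la_l(f)$ are already gorges, propagates through to show that the only material remaining between $\la_l(v)$ and $\la_r(v)$ is precisely $u_2\,a_2\,\ka(u)\,a_1\,u_1$, and matching with $v$ forces this canyon to reduce to $\ka(v)$. For the converse, assuming the decompositions together with the gorge condition, I would construct explicit idempotents $e \Rc v$ and $f \Lc v$ and verify $v = eu = uf$ by direct reduction, invoking part (i) and Lemma \ref{updownhill} to certify idempotency and using the gorge hypothesis to drive the crucial uplifts inside $eu$ and $uf$.

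The main obstacle will be the confluence argument in (iii): each interior canyon must be reduced locally without disturbing the other one or the outer hills $\la_l(v), \la_r(v)$. Pinning this down rigorously depends on the fact that the endpoints of every canyon involved remain ridges against the hills adjoining them, so that no uplift-created river can escape outward to perturb the overall skeleton. Finding the right explicit candidates for $e$ and $f$ in the converse of (iii) is equally delicate, since the candidates must both encode the extensions $u_1, u_2$ and have their own canyons be gorges, a constraint that forces a careful choice tightly matched to the gorge hypothesis.
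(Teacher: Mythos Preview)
Your plan for (i) and (ii) is correct and matches the paper's argument almost exactly; the paper is terser on the converse of (i), simply noting that $u^2\xr{*}u$ forces $\la_r(u)*\la_l(u)\xr{*}\ka(u)$ ``by definition of uplifting of rivers'', which is the same content as your river-free/length argument.

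For (iii) your sketch diverges from the paper's in a way that is not wrong but is noticeably less efficient, and in one place technically loose. In the forward direction the paper works with a \emph{single} idempotent $w$ satisfying $w\Lc v$ and $v=u\odot w$. Then $\la_r(w)=\la_r(v)=u_2\,a_2\,\la_r(u)$, and from $v=u\odot w$ one reads off $\la_r(u)*\la_l(w)\xr{*}\ka(u)\,a_1\,u_1$. Substituting,
\[
\la_r(w)*\la_l(w)=u_2\,a_2\,\la_r(u)*\la_l(w)\xr{*}u_2\,a_2\,\ka(u)\,a_1\,u_1,
\]
and since the left side is a gorge by (i), so is the right side. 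Your route via $v=euf$ introduces a second valley and requires coordinating two separate reductions; in particular, $\la_r(e)*\la_l(u)$ and $\la_r(u)*\la_l(f)$ are valleys but not canyons (their endpoints $\ka(v)$ and $\ka(u)$ have different heights), so the bookkeeping you describe is more delicate than stated.

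For the converse of (iii) the paper supplies exactly the ``tightly matched'' idempotents you anticipate: using the reverse operation,
\[
w_r=\cevl{\la_r(u)}\,a_1\,u_1*\la_r(v)\quad\text{and}\quad w_l=\la_l(v)*u_2\,a_2\,\cevl{\la_l(u)}.
\]
Lemma~\ref{updownhill} collapses the $\la_r(u)*\cevl{\la_r(u)}$ (resp.\ $\cevl{\la_l(u)}*\la_l(u)$) portion, after which the gorge hypothesis on $u_2\,a_2\,\ka(u)\,a_1\,u_1$ both certifies idempotency via (i) and yields $u\odot w_r=v=w_l\odot u$. This reverse-hill trick is the missing concrete ingredient in your converse sketch.
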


\begin{proof}
If $\la_r(u)*\la_l(u)\xr{*} \ka(u)$, then
$$u^2\ap \la_l(u)*\la_r(u)*\la_l(u)*\la_r(u)\xr{*}\la_l(u)*\ka(u)*\la_r(u)=u$$
and $u$ is an idempotent. Conversely, if $u$ is an idempotent, then $u^2\xr{*} u$. By definition of uplifting of rivers, it is obvious that we must have $\la_r(u)*\la_l(u)\xr{*} \ka(u)$. We have proved $(i)$. The proof of $(ii)$ is similar and as obvious as the proof of $(i)$. Let us prove $(iii)$ now. 
	
Assume first that $v<u$. In particular $v\leq_{\Rc} u$ and $v\leq_{\Lc} u$. Since $u\neq v$,
$$\la_l(v)=\la_l(u)\,a_1\,u_1\quad\mbox{ and }\quad\la_r(v)=u_2\,a_2\,\la_r(u)\,,$$
for some $u_1\in\lop\cup\G^5$, $u_2\in\lom\cup\G^5$ and $a_1,a_2\in A$, by Proposition \ref{desR}. But, there exists also another mountain $w\in\mxm$ such that
$$w\in E(\mxm),\quad w\Lc v\quad\mbox{ and }\quad v=u\odot w$$
as $v< u$. Hence, $\la_r(w)*\la_l(w)$ is a gorge by $(i)$, $\la_r(w)=\la_r(v)$ by Corollary \ref{R}.$(ii)$, and $\la_r(u)*\la_l(w)\xr{*} \ka(u)\,a_1\,u_1$. Then
$$\la_r(w)*\la_l(w)=\la_r(v)*\la_l(w)=u_2\,a_2\,\la_r(u)*\la_l(w)\xr{*} u_2\,a_2\,\ka(u)\,a_1\,u_1\,.$$
We can now conclude that $u_2\,a_2\,\ka(u)\,a_1\,u_1$ is a gorge because $\la_r(w)*\la_l(w)$ is a gorge too (note that $u_2\,a_2\,\ka(u)\,a_1\,u_1$ is a canyon).
	
Assume now that $\la_l(v)=\la_l(u)\,a_1\,u_1$ and $\la_r(v)=u_2\,a_2\, \la_r(u)$ for some $u_1\in\lop\cup\G^5$, $u_2\in\lom\cup\G^5$ and $a_1,a_2\in A$ such that $u_2\,a_2\,\ka(u)\,a_1\,u_1$ is a gorge. In particular $u\neq v$. Note also that
$$w_r=\cevl{\la_r(u)}\,a_1\,u_1*\la_r(v)\quad\mbox{ and }\quad w_l=\la_l(v)*u_2\,a_2\,\cevl{\la_l(u)}$$
are well defined mountains. In fact, they are both idempotents of $\mxm$. For example,
$$\la_r(v)*\cevl{\la_r(u)}\,a_1\,u_1 =u_2\,a_2\,\la_r(u)*\cevl{\la_r(u)}\,a_1\,u_1\xr{*}u_2\,a_2\, \ka(u)\,a_1\, u_1\,;$$
thus $\la_r(v)*\cevl{\la_r(u)}\,a_1\,u_1$ is a gorge and so $w_r$ is an idempotent. Finally, observe that $u\odot w_r=v$ and $w_l\odot u=v$, whence $v<u$.
\end{proof}

\section{The universal property of $\fo$}\label{sec5}
 
In this section we prove that all regular semigroups weakly generated by $x$ are homomorphic images of $\fo$ under a homomorphism that sends $[x]$ into $x$. A crucial concept is the notion of skeleton. A \emph{skeleton mapping} for a  regular semigroup $S$ is a mapping $\phi:\G\to S^1$ satisfying
\begin{itemize}
\item[$(i)$] $x\phi\in S$, $x'\phi\in V(x\phi)$, $g_{xx'}\phi=(x\phi)(x'\phi)$ and $1\phi\in E(S^1)$;
\item[$(ii)$] $(1\phi)(x\phi)=x\phi=(x\phi)(1\phi)$ and $(1\phi)(x'\phi)=x'\phi=(x'\phi)(1\phi)$;
\item[$(iii)$] $g\phi\in S\big(g^{\phi,r},g^{\phi,l}\big)$ for all $g\in \G_i$ with $i\geq 2$, where
$$g^{\phi,l}=(g^c\phi)((g^{la})'\phi)(g^l\phi)(g^{la}\phi)$$ 
and
$$g^{\phi,r}=((g^{ra})'\phi)(g^r\phi)(g^{ra}\phi)(g^c\phi)\,.$$
\end{itemize}
In this definition we are using the non-idempotent version of the definition of sandwich set since we cannot guarantee at this point that $g^{\phi,l}$ and $g^{\phi,r}$ are idempotents of $S$. However, after the following observations, we will prove that these elements are indeed idempotents of $S$. From that point on, we will consider here also the usual definition of sandwich set for idempotents.

It is easy to see that $(1\phi)(g\phi)=g\phi=(g\phi)(1\phi)$ for all $g\in\G$ by induction on $\up(g)$. In fact, by $(iii)$, for all $g\in\G_i$ with $i\geq 2$, we just need to observe that 
$$g\phi=g^{\phi,l}y(g\phi)=(g\phi)zg^{\phi,r}$$
for any $y\in V(g^{\phi,l})$ and $z\in V(g^{\phi,r})$, and then use the induction hypothesis. This same observation gives us that $(g^c\phi)(g\phi)=(g\phi)=(g\phi)(g^c\phi)$ since $g^c\phi$ is an idempotent of $S$. Further, $(i)$ guarantees that $(G\setminus\{1\})\phi\subseteq S$. In this paper, a \emph{skeleton} of $S$ is the image of $\G\setminus\{1\}$ under some skeleton mapping. Thus, except for $x\phi$ and $x'\phi$, which are mutually inverse elements with $g_{xx'}\phi=(x\phi)(x'\phi)$, all other elements of a skeleton are idempotents of $S$.

\begin{lem}\label{l51}
Let $S$ be a regular semigroup and $\phi:\G\to S^1$ be a skeleton mapping. Then $g^{\phi,l}$ and $g^{\phi,r}$ are idempotents of $S$ for all $g\in\G_i$ with $i\geq 2$.
\end{lem}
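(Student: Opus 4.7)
The plan is to proceed by induction on $i = \up(g) \geq 2$, essentially transplanting the $\foo$-side argument behind Lemma \ref{prodelem}.$(iv)$--$(v)$ into $S$ via $\phi$.

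For the base case $i = 2$, I would use that $\G_{2,d}$ is empty and that both elements of $\G_{2,e}$ satisfy $g^c = 1$ and $g^l = g^r = g_{xx'}$, hence $g^c\phi = 1\phi$ and $g_{xx'}\phi = (x\phi)(x'\phi)$. Combining conditions (i) and (ii) of a skeleton mapping with $x'\phi \in V(x\phi)$, a direct computation collapses $g^{\phi,l}$ to $(x\phi)(x'\phi)$ when $g^{la} = 1$ and to $(x'\phi)(x\phi)$ when $g^{la} = x$; both are manifestly idempotent in $S$, and $g^{\phi,r}$ is handled dually.

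For the inductive step at $i \geq 3$, I would first extract a $\phi$-analog of Lemma \ref{prodelem}.$(i)$. Set $P_L(h) := ((h^{la})'\phi)(h^l\phi)(h^{la}\phi)$ and define $P_R(h)$ symmetrically, so that $h^{\phi,l} = (h^c\phi)P_L(h)$ and $h^{\phi,r} = P_R(h)(h^c\phi)$. The auxiliary claim is that $(h\phi)P_L(h)(h\phi) = h\phi = (h\phi)P_R(h)(h\phi)$ for every $h \in \G_j$ with $2 \leq j < i$. Indeed, by the inductive hypothesis both $h^{\phi,l}$ and $h^{\phi,r}$ are idempotents, so the sandwich-set membership $h\phi \in S(h^{\phi,r}, h^{\phi,l})$ reduces to its idempotent-sandwich-set form and yields $h^{\phi,l}(h\phi) = h\phi$ and $(h\phi)h^{\phi,r} = h\phi$. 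Expanding $h^{\phi,l}$, multiplying on the left by $h\phi$, and using $(h\phi)(h^c\phi) = h\phi$ together with the idempotency of $h\phi$, one obtains the first identity; the $P_R$ identity is dual.

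For $g \in \G_i$ itself, the key combinatorial observation to exploit is the same as in the $\foo$-proof of Lemma \ref{prodelem}.$(iv)$: one has $(g^{l_aa})'g^c g^{l_aa} = (g^l)^L$ when $l_a = l^2$ and $(g^{l_aa})'g^c g^{l_aa} = (g^l)^R$ when $l_a = lr$, as a literal identity of words over $\G$. Applying $\phi$ letter by letter, and using $g^{la} = (g^{l_aa})'$, this becomes $(g^{la}\phi)(g^c\phi)((g^{la})'\phi) = P_L(g^l)$ or $P_R(g^l)$ respectively. Invoking the auxiliary claim with $h = g^l$ (whose height is $i-1 \geq 2$), I then get
$$P_L(g)\,(g^c\phi)\,P_L(g) \;=\; ((g^{la})'\phi)\bigl[(g^l\phi)\,P_{L\text{ or }R}(g^l)\,(g^l\phi)\bigr](g^{la}\phi) \;=\; P_L(g),$$
and consequently $(g^{\phi,l})^2 = (g^c\phi)P_L(g)(g^c\phi)P_L(g) = (g^c\phi)P_L(g) = g^{\phi,l}$. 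The idempotency of $g^{\phi,r}$ follows by the mirror computation with $h = g^r$. The main obstacle I expect is bookkeeping rather than anything deep: carefully tracking the anchor identifications to recognise the middle triplet as $P_L(g^l)$ or $P_R(g^l)$, and justifying that the inductive idempotency of $h^{\phi,l}, h^{\phi,r}$ genuinely upgrades the non-idempotent sandwich-set membership used in the definition of skeleton mapping to the cleaner idempotent-sandwich identities — neither point is deep, but both are notationally delicate.
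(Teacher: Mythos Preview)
Your proposal is correct and follows essentially the same inductive argument as the paper: the base case is handled by direct computation in $\{(x\phi)(x'\phi),\,(x'\phi)(x\phi)\}$, and the inductive step recognises the conjugate $(g^{la}\phi)(g^c\phi)((g^{la})'\phi)$ as either $P_L(g^l)$ or $P_R(g^l)$ via the anchor identity $g^{la}=(g^{l_aa})'$, then collapses using the induction hypothesis applied to $g^l$. The only cosmetic difference is that the paper inserts the factor $(g^{lc}\phi)$ explicitly to rebuild the full $(g^l)^{\phi,l}$ or $(g^l)^{\phi,r}$ before collapsing, whereas you first strip off $h^c\phi$ to isolate the auxiliary identity $(h\phi)P_L(h)(h\phi)=h\phi$; the two computations are reshufflings of the same equalities.
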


\begin{proof}
We need to prove that $g^{\phi,l}$ and $g^{\phi,r}$ are idempotents of $S$ simultaneously by induction on $i$. For $i=2$, note that $g^l=g_{xx'}=g^r$ and $g^c=1$. Thus, we can easily check that $g^{\phi,l}$ and $g^{\phi,r}$ belong to the set $\{aa',a'a\}$ for $a=x\phi$ and $a'=x'\phi$; whence they are idempotents.

Assume now that $i>2$. By definition of skeleton mapping, 
$$g^l\phi\in S\big((g^l)^{\phi,r},(g^l)^{\phi,l}\big)\,,$$
and we may assume that $(g^l)^{\phi,l}$ and $(g^l)^{\phi,r}$ are idempotents by the induction hypothesis. Thus $(g^l)^{\phi,l}(g^l\phi)=g^l\phi$ and $(g^l\phi)(g^l)^{\phi,r}=g^l\phi$. Note also that $g^c=g^{l^2}$ and $g^{la}=(g^{l^2a})'$, or $g^c=g^{lr}$ and $g^{la}=(g^{lra})'$. If $g^c=g^{l^2}$ and $g^{la}=(g^{l^2a})'$, then 
$$\begin{array}{ll}
(g^l\phi)(g^{la}\phi)(g^c\phi)((g^{la})'\phi)(g^l\phi) \hspace*{-.1cm}& =\;(g^l\phi)(g^{lc}\phi)(g^{la}\phi)(g^c\phi)((g^{la})'\phi)(g^l\phi) \\ [.2cm]
&=\;(g^l\phi)(g^{lc}\phi)((g^{l^2a})'\phi)(g^{l^2}\phi)(g^{l^2a}\phi)(g^l\phi) \\ [.2cm]
&=\;(g^{l}\phi)((g^l)^{\phi,l})(g^l\phi)\;=\;g^l\phi\,.
\end{array}$$
If $g^c=g^{lr}$ and $g^{la}=(g^{lra})'$, then 
$$\begin{array}{ll}
	(g^l\phi)(g^{la}\phi)(g^c\phi)((g^{la})'\phi)(g^l\phi) \hspace*{-.1cm}& =\;(g^l\phi)(g^{la}\phi)(g^c\phi)((g^{la})'\phi)(g^{lc}\phi)(g^l\phi) \\ [.2cm]
	&=\;(g^l\phi)((g^{lra})'\phi)(g^{lr}\phi)(g^{lra}\phi)(g^{lc}\phi)(g^l\phi) \\ [.2cm]
	&=\;(g^{l}\phi)((g^l)^{\phi,r})(g^l\phi)\;=\;g^l\phi\,.
\end{array}$$
It is easy to see now that $g^{\phi,l}=(g^c\phi)((g^{la})'\phi)(g^l\phi)(g^{la}\phi)$ is an idempotent of $S$. The proof that $g^{\phi,r}$ is also an idempotent of $S$ is similarly.
\end{proof}

If $a$ is an element of a regular semigroup $S$, we can easily construct recursively a skeleton mapping such that $x\phi=a$. We can choose an inverse $a'$ of $a$ in $S$ and define $x'\phi=a'$, $g_{xx'}\phi=aa'$ and $1\phi=1\in S^1$. Then, assuming that $g\in\G_i$ with $i\geq 2$ and $\phi$ is defined for all $h\in\G_j$ with $j<i$, we choose
$$y\in S\big(g^{\phi,r},g^{\phi,l}\big)\,,$$
and set $g\phi=y$. Note that $S\big(g^{\phi,r},g^{\phi,l}\big)\neq\emptyset$ since $S$ is regular. We cannot however guarantee that the skeleton mapping $\phi$ is unique. In terms of skeletons, each element $a\in S$ belongs to at least one skeleton of $S$, but it may belong to many distinct skeletons.

In the next result we show that every skeleton mapping for a regular semigroup $S$ can be uniquely extended into a homomorphism from $\foo$ to $S^1$.

\begin{prop}\label{r52}
If $\phi:\G\to S^1$ is a skeleton mapping, then there is a unique (semigroup) homomorphism $\varphi:\foo\to S^1$ extending $\phi$ (that is, such that $g\phi=[g]\varphi$ for all $g\in\G$). Furthermore, $\langle\G\phi\rangle=\foo\varphi$ is a regular subsemigroup of $S^1$ and a monoid with identity element $1\phi$.
\end{prop}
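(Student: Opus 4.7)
The plan is to build $\varphi$ by first extending $\phi$ to the free semigroup $\Gp$ and then quotienting by $\rho$. By freeness, $\phi$ lifts uniquely to a semigroup homomorphism $\widetilde\phi:\Gp\to S^1$, so to produce $\varphi$ it suffices to show that $\widetilde\phi$ identifies every pair of the generating set $\R=\rho_e\cup\rho_s$: the kernel of $\widetilde\phi$ is then a congruence containing $\rho$, and $\widetilde\phi$ factors through a unique $\varphi:\foo\to S^1$ with $[g]\varphi=g\phi$. Uniqueness of $\varphi$ is automatic since $\G$ generates $\foo$.

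All pairs in $\rho_e$ are immediate. Conditions $(i)$ and $(ii)$ of the skeleton definition take care of $(xx'x,x)$, $(x'xx',x')$ and $(g_{xx'},xx')$; the inductive observation proved just before Lemma \ref{l51} (that $1\phi$ is a two-sided identity on $\G\phi$) handles $(1g,g)$ and $(g1,g)$; and $(g^2,g)$ for $g\in\G'$ follows because $g\phi$ is always idempotent---trivially for $g=1$, by regularity of $x\phi$ for $g=g_{xx'}$, and for $g\in\G^5$ with $\up(g)\geq 2$ because $g\phi\in S(g^{\phi,r},g^{\phi,l})$ and Lemma \ref{l51} ensures that $g^{\phi,l}$ and $g^{\phi,r}$ are idempotents, so that this sandwich set is a rectangular band.

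The main obstacle is $\rho_s$. The first two families $(g^cg^Lg,g)$ and $(gg^Rg^c,g)$ are painless: they unfold to the sandwich identities $g^{\phi,l}\cdot g\phi=g\phi$ and $g\phi\cdot g^{\phi,r}=g\phi$ respectively. The third family $(g^r\!\cdot g\cdot g^l,\,g^r\!\cdot g^c\cdot g^l)$ is the real difficulty: setting $p,q,r,s,t,u,v,h$ to be the $\phi$-images of $(g^{ra})',g^r,g^{ra},g^c,(g^{la})',g^l,g^{la},g$, it asks for the equality $qrhtu=qrstu$ in $S$. The sandwich identity $g^{\phi,r}\,h\,g^{\phi,l}=g^{\phi,r}g^{\phi,l}$ gives, after absorbing central $g^c$'s through $sh=h=hs$ (itself a consequence of $g^c\phi\cdot g\phi=g\phi\cdot g^c\phi=g\phi$, noted just before Lemma \ref{l51}) and using $s^2=s$, the weaker equation $p(qrhtu)v=p(qrstu)v$. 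To strip the flanking $p$ and $v$, I plan to use the five-letter absorber identities $uvstu=u$ and $qpsrq=q$ (the first is proved inside Lemma \ref{l51}; the second by its left--right dual), together with the anchor-inverse relations $prp=p$, $rpr=r$, $vtv=v$, $tvt=t$ inherited from $xx'x=x$ and $x'xx'=x'$ and the idempotency of $q$, $u$ and $h$, inserting these as absorbers at the outer positions to force the cancellation.

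Finally, $\foo\varphi=\langle\G\phi\rangle$ because $\G$ generates $\foo$. This image is a subsemigroup of $S^1$ as the image of a homomorphism, it is regular because $\foo$ is regular by Proposition \ref{reg} and regularity passes to homomorphic images, and it is a monoid with identity $[1]\varphi=1\phi$ because $[1]$ is the identity of the monoid $\foo$.
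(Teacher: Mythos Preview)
Your overall strategy is exactly the paper's: extend $\phi$ freely to $\Gp$ and verify that $\rho_e\cup\rho_s$ lies in the kernel, with the five-letter absorption identities coming out of Lemma~\ref{l51} carrying the third family of $\rho_s$. The handling of $\rho_e$ and of the first two families of $\rho_s$ is fine.

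There are, however, two concrete problems in your treatment of the third family. First, the dual absorber identity is $qrspq=q$, i.e.\ $(g^r\phi)(g^{ra}\phi)(g^c\phi)((g^{ra})'\phi)(g^r\phi)=g^r\phi$, not $qpsrq=q$; the version you wrote is false in general (already for $g=g_{2,e,1}$ it would assert $(aa')a'\,a(aa')=aa'$ for $a=x\phi$, which need not hold). Second, you only announce a ``plan'' to strip the flanking $p$ and $v$ and never carry it out; the anchor-inverse relations $prp=p$, $rpr=r$, $vtv=v$, $tvt=t$ that you list are true but irrelevant here. With the correct absorber the stripping is immediate: left-multiply $pqrhtuv=pqrstuv$ by $qrs$ and use $qrspqr=qr$ to get $qrhtuv=qrstuv$; then right-multiply by $stu$ and use $tuvstu=tu$ to obtain $qrhtu=qrstu$.

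The paper runs the same computation in the opposite direction, which is slightly cleaner: it substitutes $q=qrspq$, $u=uvstu$ and $h=shs$ directly inside $qrhtu$, recognises the middle block as $g^{\phi,r}\,h\,g^{\phi,l}$, applies the sandwich identity $g^{\phi,r}\,h\,g^{\phi,l}=g^{\phi,r}g^{\phi,l}$, and collapses back to $qrstu$. No stripping is needed, and the anchor relations never enter.
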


\begin{proof}
Since $\Gp$ is freely generated by $\G$, let $\phi_1:\Gp\to S^1$ be the only homomorphism that extends $\phi$. By definition of skeleton mapping,
$$\rho_e\cup\left\{(g^cg^Lg,g),(gg^Rg^c,g)|\, g\in \G \mbox{ with }\up(g)\geq 2\right\}$$
is contained in the kernel of $\phi_1$. To conclude that $\rho$ is contained in the kernel of $\phi_1$, it is now enough to show that $(g^r\!\!\cdot\! g\!\cdot\! g^l,g^r\!\!\cdot\! g^c\!\cdot\! g^l)$ belongs to that kernel for all $g\in\G_i$ with $i\geq 2$.

Due to space constrains, we denote by $w$ and $z$ the expressions
$$(g^l\phi)(g^{la}\phi)(g^c\phi)((g^{la})'\phi)(g^l\phi)\quad\mbox{ and }\quad (g^r\phi)(g^{ra}\phi)(g^c\phi)((g^{ra})'\phi)(g^r\phi)\,,$$
respectively. In the proof of Proposition \ref{l51} we showed that $g^l\phi=w$. Similar arguments allow us to conclude that $g^r\phi=z$. We need also the equality $g\phi=(g^c\phi)(g\phi)(g^c\phi)$ that follows from an observation made prior to Lemma \ref{l51}. So,
$$\begin{array}{ll}
(g^r\!\!\cdot\! g\!\cdot\! g^l)\phi_1\hspace*{-.1cm}&=\;(g^r\phi)(g^{ra}\phi)(g\phi)((g^{la})'\phi)(g^l\phi) \\ [.2cm]
&=\;z\,(g^{ra}\phi)(g^c\phi)(g\phi)(g^c\phi)((g^{la})'\phi)\,w \\ [.2cm]
&=\;(g^r\phi)(g^{ra}\phi)(g^c\phi)g^{\phi,r}(g\phi)g^{\phi,l}(g^c\phi)((g^{la})'\phi)(g^l\phi) \\ [.2cm]
&=\;(g^r\phi)(g^{ra}\phi)(g^c\phi)g^{\phi,r}g^{\phi,l}(g^c\phi)((g^{la})'\phi)(g^l\phi) \\ [.2cm]
&=\;z\,(g^{ra}\phi)(g^c\phi)((g^{la})'\phi)\,w \\ [.2cm]
&=\;(g^r\phi)(g^{ra}\phi)(g^c\phi)((g^{la})'\phi)(g^l\phi)\;=\;(g^r\!\!\cdot\! g^c\!\cdot\! g^l)\phi_1\,,
\end{array}$$
where the fourth ``='' sign is due to $g\phi\in S\big(g^{\phi,r},g^{\phi,l}\big)$.

So, we have shown that $\rho$ is contained in the kernel of $\phi_1$. Thus, if $\varphi_1:\Gp\to\foo$ denotes the natural quotient homomorphism, then there exists a homomorphism $\varphi:\foo\to S^1$ such that $\phi_1=\varphi_1\varphi$. Hence $\varphi$ extends $\phi$. If $\varphi':\foo\to S^1$ is another homomorphism extending $\phi$, then $\varphi_1\varphi'=\phi_1=\varphi_1\varphi$. Consequently $\varphi'=\varphi$ since $\varphi_1$ is surjective. We proved there exists a unique homomorphism $\varphi:\foo\to S^1$ extending $\phi$.

Finally,  the second part of this result follows immediately from the fact that $\foo$ is a regular monoid with identity element [1] (this property is inherited by homomorphic images) and it is generated by $\{[g]\,|\;g\in\G\}$.
\end{proof}

Note that, in the previous proposition, $\foo\varphi$ may not be a submonoid of $S^1$ since $[1]\varphi$ may not be the identity of element of $S^1$. Given a skeleton mapping $\phi$, we can only guarantee that $1\phi$ is an identity element for the subsemigroup $\foo\varphi$. Also by the previous result, a skeleton of $S$ generates always a regular subsemigroup of $S$. We leave this conclusion registered in the following corollary for future reference.

\begin{cor}\label{r53}
If $A$ is a skeleton of $S$, then $\langle A\rangle$ is a regular subsemigroup of $S$.
\end{cor}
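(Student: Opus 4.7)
The proof will be a direct application of Proposition \ref{r52}. Given a skeleton $A=(\G\setminus\{1\})\phi$ arising from some skeleton mapping $\phi\colon\G\to S^1$, I would extend $\phi$ to the homomorphism $\varphi\colon\foo\to S^1$ furnished by that proposition.

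The key observation is that $\langle A\rangle$ in $S$ coincides with the image $\fo\varphi$ of the subsemigroup $\fo=\foo\setminus\{[1]\}$. Indeed, every element of $\fo$ can be written as a product of the generators $[g]$ with $g\in\G\setminus\{1\}$, since any $[1]$-factors appearing in a representative product may be suppressed (as $[1]$ is the identity of $\foo$). Applying $\varphi$ then yields
$$\fo\varphi=\langle\{g\phi:g\in\G\setminus\{1\}\}\rangle=\langle A\rangle.$$
In particular $\langle A\rangle\subseteq S$: condition $(i)$ of the definition of a skeleton mapping places $x\phi$, $x'\phi$ and $g_{xx'}\phi$ in $S$, while condition $(iii)$ together with Lemma \ref{l51} places the higher-height generators $g\phi$ (with $\up(g)\geq 2$) in $S$ as well.

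It then remains only to verify that $\fo$ is itself a regular semigroup, since the homomorphic image of a regular semigroup is regular. But by Proposition \ref{reg}, each $[u]\in\foo$ with $u\in\ltp$ admits $[\cev{u}]$ as an inverse, and the reverse of a non-trivial landscape in $\ltp$ is again non-trivial, so this inverse stays inside $\fo$; combined with $\be(v)\in\ltp\cup\{1\}$ from Corollary \ref{m}, this shows $\fo$ is regular. Thus $\langle A\rangle=\fo\varphi$ is a regular subsemigroup of $S$.

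I do not anticipate any genuine obstacle; the corollary is essentially packaging Proposition \ref{r52} as a statement purely about $S$ instead of $S^1$. The only delicate point is confirming the equality $\langle A\rangle=\fo\varphi$ rather than merely $\langle A\rangle\subseteq\foo\varphi$, and this is what the suppression-of-$[1]$ remark above handles cleanly.
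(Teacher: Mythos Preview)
Your proof is correct and follows the same approach as the paper, which simply records the corollary as a direct consequence of Proposition~\ref{r52} without further argument. You actually supply more detail than the paper does: the paper's Proposition~\ref{r52} literally gives regularity of $\langle\G\phi\rangle=\foo\varphi$ inside $S^1$, whereas the corollary concerns $\langle A\rangle=\langle(\G\setminus\{1\})\phi\rangle$ inside $S$, and your passage through $\fo\varphi$ (together with the observation that $\fo$ is regular and generated by $\{[g]:g\in\G\setminus\{1\}\}$) is exactly what bridges that small gap.
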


Let $T$ be a regular subsemigroup of a (not necessarily regular) semigroup $S$. Next, we prove that if $T$ is weakly generated by an element $a$, then there exists a homomorphism $\varphi:\fo\to S$ such that $[x]\varphi=a$ and $\fo\varphi=T$.

\begin{cor}
Let $T$ be a regular subsemigroup of a semigroup $S$. If $T$ is weakly generated by $a$, then $T$ is the image of $\fo$ under some homomorphism $\varphi:\fo\to S$ such that $[x]\varphi=a$.
\end{cor}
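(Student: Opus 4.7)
The plan is to construct a skeleton mapping $\phi:\G\to T^1$ with $x\phi=a$, extend it via Proposition \ref{r52} (applied with the ambient regular semigroup taken to be $T$) to a homomorphism $\varphi:\foo\to T^1$, and then force the image of $\fo$ under $\varphi$ to equal $T$ by invoking the weak-generation hypothesis.

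First, since $T$ is regular I will pick an inverse $a'\in T$ of $a$ and set $x\phi=a$, $x'\phi=a'$, $g_{xx'}\phi=aa'$, and $1\phi$ equal to the identity of $T^1$. Then, recursively on $\up(g)$, for $g\in\G_i$ with $i\geq 2$ I will define $g\phi$ to be any element of the sandwich set $S(g^{\phi,r},g^{\phi,l})$ computed inside $T$. This set is nonempty because $T$ is regular and, by the inductive argument of Lemma \ref{l51} carried out inside $T$, the elements $g^{\phi,l}$ and $g^{\phi,r}$ are idempotents of $T$ at every stage. This produces a skeleton mapping $\phi:\G\to T^1$.

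Next, Proposition \ref{r52} extends $\phi$ uniquely to a homomorphism $\varphi:\foo\to T^1$ whose image is a regular subsemigroup of $T^1$. Composing with the inclusion $T^1\hookrightarrow S^1$ and restricting to $\fo$ gives a homomorphism $\varphi:\fo\to S$ with $[x]\varphi=a$. Since every generator $g\phi$ with $g\in\G\setminus\{1\}$ lies in $T$ and $T$ is a subsemigroup of $T^1$, the image $\fo\varphi$ is contained in $T$.

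Finally, $\fo$ is regular by Proposition \ref{reg} and regularity is inherited by homomorphic images, so $\fo\varphi$ is a regular subsemigroup of $T$ containing $a$. The hypothesis that $T$ is weakly generated by $a$ then forces $\fo\varphi=T$. The only delicate point is the simultaneous recursive definition of $\phi$: before selecting $g\phi$ from $S(g^{\phi,r},g^{\phi,l})$, one needs to know that $g^{\phi,l}$ and $g^{\phi,r}$ are already idempotents of $T$, and this is exactly what the inductive form of Lemma \ref{l51} guarantees.
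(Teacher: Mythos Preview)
Your proposal is correct and follows essentially the same approach as the paper: construct a skeleton mapping $\phi:\G\to T^1$ with $x\phi=a$, extend it via Proposition~\ref{r52} to $\varphi:\foo\to T^1$, restrict to $\fo$, and use weak generation of $T$ to force $\fo\varphi=T$. The only minor difference is that your final remark about needing $g^{\phi,l}$ and $g^{\phi,r}$ to be idempotents before choosing $g\phi$ is unnecessary: the paper explicitly uses the non-idempotent version of the sandwich set $S(a,b)=S(a'a,bb')$ in the definition of skeleton mapping, so nonemptiness follows directly from regularity of $T$ without invoking Lemma~\ref{l51} during the recursive construction.
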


\begin{proof}
Let $\phi:\G\to T^1$ be a skeleton mapping such that $x\phi=a$. We have already observed that $\phi$ exists. Let $\varphi:\foo\to T^1$ be the unique homomorphism extending $\phi$ given by Proposition \ref{r52}. Note that $\varphi_{|\fo}$ is a homomorphism from $\fo$ to $T$. Thus $\fo\varphi$ is a regular subsemigroup of $T$ containing $a$. Since $T$ is weakly generated by $a$, we conclude that $\fo\varphi=T$. Finally, note that we can consider $\varphi$ as a homomorphism from $\fo$ to $S$, and $[x]\varphi=a$ as wanted.
\end{proof}

The main result of this paper is now an obvious consequence of the previous corollary.

\begin{teor}\label{fiwig}
All regular semigroups weakly generated by an element $a$ are homomorphic images of $\fo$ under a homomorphism that sends $[x]$ into $a$.
\end{teor}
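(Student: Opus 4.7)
The plan is to deduce this theorem as an immediate specialization of the preceding corollary by taking the ambient semigroup to equal the regular semigroup under consideration. So let $S$ be any regular semigroup weakly generated by an element $a\in S$. In particular, $S$ is a regular subsemigroup of itself containing $a$, so the hypotheses of the previous corollary are satisfied with both ``$S$'' and ``$T$'' equal to $S$.

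Applying that corollary, I obtain a homomorphism $\varphi:\fo\to S$ with $[x]\varphi=a$ and $\fo\varphi=S$. The second equality says exactly that $S$ is the image of $\fo$ under $\varphi$, which is the content of the theorem.

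There is really no additional work required here, because all of the substance lies upstream: the existence of a skeleton mapping $\phi:\G\to S^1$ with $x\phi=a$ (which one builds recursively by choosing an inverse $a'$ of $a$ and then, at each height $i\ge 2$, picking an element of the nonempty sandwich set $S(g^{\phi,r},g^{\phi,l})$), Proposition \ref{r52} that promotes $\phi$ to a homomorphism $\varphi:\foo\to S^1$, and finally the weak generation of $S$ by $a$, which forces the regular subsemigroup $\fo\varphi$ of $S$ (regular by Corollary \ref{r53}, and containing $a=[x]\varphi$) to coincide with $S$.

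The only subtle point to keep in mind — already handled in the corollary — is that one should restrict $\varphi$ from $\foo\to S^1$ down to $\fo\to S$ to get the statement in the form required; this is harmless because $\fo=\foo\setminus\{[1]\}$ is a subsemigroup and $\varphi$ sends every element of $\fo$ into $S$ (indeed, $\langle\G\setminus\{1\}\rangle\phi\subseteq S$ by the definition of a skeleton mapping). Thus no new obstacle arises, and the theorem follows at once.
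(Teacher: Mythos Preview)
Your proposal is correct and matches the paper's own argument: the paper states that the theorem is ``an obvious consequence of the previous corollary,'' and your specialization $S=T$ is exactly how one extracts the theorem from that corollary. The additional commentary you give about skeleton mappings and the restriction $\foo\to\fo$ is accurate and just recapitulates what the corollary's proof already established.
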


The reverse of this theorem is not true however, that is, there are homomorphic images of $\fo$ that are not weakly generated by the image of $[x]$. Next, we present one such example. This example is adapted from the one used in \cite{LO22} to show that not all homomorphic images of $\ftt$ are weakly generated by two idempotents.\vspace*{.3cm}

\noindent {\bf Example 1}: We begin by defining a presentation $\langle X,R_1\rangle$ for a semigroup $S_1$. We set $X=\{x,x',g_1,g_2,h,0\}$ and include in $R_1$ the relations corresponding to the information $e0=0=0e$ for all $e\in X$, $x'\in V(x)$, $g_1\in S(x'x,xx')$, $g_2\in S(xx',x'x)$ and $h\in S(xg_2x',xg_1x')$. So $0$ correspond to a zero element of $S_1$ as expected. Now, add to $R_1$ also the pairs $(u,0)$ for 
$$u\in\{g_ig_j,\;g_iag_j\,|\;\{i,j\}=\{1,2\}\mbox{ and }a\in\{x,x'\}\}\cup\{x^2,(x')^2\}\,.$$
The semigroup $S_1$ is the one obtained from $\fo$ by collapsing all $\Dc$-classes into the $0$ element except for $\Dcc_{[x]}$, $\Dcc_{[g_{2,e,1}]}$, $\Dcc_{[g_{2,e,2}]}$ and $\Dcc_{[g_{3,d,2}]}$. In fact, the first set of relations identifies $g_1$, $g_2$ and $h$ with the elements $g_{2,e,1}$, $g_{2,e,2}$ and $g_{3,d,2}$, respectively, while the second set of relations collapses all other $\Dc$-classes into $0$. 

But the semigroup we are interested in is the semigroup $S$ given by the presentation $\langle X,R\rangle$ where $R=R_1\cup\{(g_1,g_1x'hxg_1),(g_2,g_2x'hxg_2)\}\,$. Since $R$ contains $R_1$, the semigroup $S$ is also a homomorphic image of $\fo$. The semigroup $S$ is, in fact, obtained from $S_1$ by merging the $\Dc$-classes of $g_1$ and $g_2$ into distinct `blocks' of the $\Dc$-class of $h$. We illustrate the ``Egg-box'' diagram of the $\Dc$-classes of $S$ in Figure \ref{figT1} (note that each congruence class is represented by one of its elements; we write also $e=xx'$, $f=x'x$ and $h_1=x'hx$ due to space constrains).

\begin{figure}[ht]
\begin{tabular}{c}
\begin{tabular}{|c|c|}
\hline $x$ & $e\;*$\\ 
\hline $f\; *$ & $x'$ \\ \hline 
\end{tabular} \\ [.6cm]			
\begingroup
\renewcommand{\arraystretch}{1.2}
{\small\begin{tabular}{|c|c|c|c|c|c|c|c|}
\hline	$g_1e\;*$ & $g_1x$ & $g_1x'$ & $g_1\;*$ & $g_1x'h\;*$ & $g_1h_1$ & $g_1h_1g_2$ & $g_1h_1g_2x$ \\ 
\hline	$x'g_1e$ & $x'g_1x\; *$ & $x'g_1x'$ & $x'g_1$ & $x'g_1x'h$ & $\!x'g_1h_1\;*\!$ & $\!x'g_1h_1g_2\!$ & $\!x'g_1h_1g_2x\!$ \\ 
\hline 	$x^2x'$ & $x^2$ & $xg_1x'\;*$ & $xg_1$ & $h\;*$ & $hx$ & $hxg_2$ & $hxg_2x$ \\
\hline $fe$ & $fx$ & $fg_1x'$ & $fg_1\;*$ & $x'h$ & $h_1\;*$ & $h_1g_2\;*$ & $h_1g_2x$ \\
\hline $xg_2fe$ & $xg_2fx$ & $xg_2fg_1x'$ & $xg_2fg_1$ & $xg_2x'\;*$ & $g_2f$ & $xg_2$ & $xg_2x$ \\			
\hline $g_2fe$ & $g_2fx$ & $g_2fg_1x'$ & $g_2fg_1$ & $g_2x'$ & $g_2f\;*$ & $g_2\;*$ & $g_2x$ \\	
\hline $efe$ & $efx$ & $efxg_1x'$ & $efg_1$ & $ex'$ & $ef$ & $eg_2\;*$ & $eg_2x$ \\
\hline $x'fe$ & $x'fx$ & $x'fg_1x'$ & $x'fg_1$ & $(x')^2$ & $x'f$ & $x'g_2 $ & $x'g_2x\;*$ \\	
\hline
\end{tabular}}
\endgroup\\ [2.2cm]
\begin{tabular}{|c|}
\hline	$0\;*$\\ \hline
\end{tabular}
\end{tabular}
\caption{``Egg-box'' diagram of $S$\hspace*{.2cm} {\small($*$ - idempotent)}.}\label{figT1}
\end{figure}

The Semigroup $S$ is a homomorphic image of $\fo$ but it is not weakly generated by $x$. The subsemigroup $T$ generated by the set $\{x,x',g_1x'hxg_2\}$ is a proper regular subsemigroup of $S$. It is obtained from $S$ by deleting the fifth and sixth rows and the third and fourth columns of $\Dcc_{h}$. 
\hfill\qed\vspace*{.3cm}
 
The fact that we were able to adapt the example for $\ftt$ to an example for $\fo$ shouldn't be so surprising. In the next section we show that $\ftt$ can be embedded into $\fo$ in a natural way. But before, let us end this section by proving that $\fo$ is unique up to isomorphism.

\begin{prop}
Let $T$ be a regular semigroup weakly generated by $x$ such that all other regular semigroups weakly generated by $x$ are homomorphic images of $T$ (under a homomorphism that fixes $x$). Then $T$ and $\fo$ are isomorphic.
\end{prop}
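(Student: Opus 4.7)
My approach is the standard universal-property argument. First I would invoke Theorem \ref{fiwig} to obtain a homomorphism $\varphi\colon\fo\to T$ with $[x]\varphi=x$; its image is a regular subsemigroup of $T$ containing $x$, so by the weak generation of $T$ by $x$ we have $\fo\varphi=T$, i.e.\ $\varphi$ is surjective. Conversely, $\fo$ is itself a regular semigroup weakly generated by $[x]$ (Proposition \ref{weakly}), so the hypothesis on $T$ supplies a homomorphism $\psi\colon T\to\fo$ with $x\psi=[x]$.

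The heart of the argument is to show that the composite $\mu=\varphi\psi\colon\fo\to\fo$ equals $\mathrm{id}_\fo$. I would prove the slightly stronger statement that \emph{any} endomorphism $\mu$ of $\fo$ with $[x]\mu=[x]$ must be the identity. Because $[x']$ is the unique inverse of $[x]$ in $\fo$ (as recorded in the proof of Proposition \ref{weakly}) and $[x']\mu$ is necessarily an inverse of $[x]\mu=[x]$, we obtain $[x']\mu=[x']$, whence $[g_{xx'}]\mu=[xx']\mu=[g_{xx'}]$. I would then induct on $\up(g)$ for $g\in\G^5$: assuming $[h]\mu=[h]$ for every $h\in\G^5$ with $\up(h)<\up(g)$, the idempotents $[g^Rg^c]$ and $[g^cg^L]$ are products of $[x]$, $[x']$, and lower-height $[h]$'s (with any $[1]$-factors acting as identity in $\foo$), so both are fixed by $\mu$. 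Since semigroup homomorphisms clearly preserve sandwich-set membership, Lemma \ref{prodelem}$(vi)$ yields $[g]\mu\in S([g^Rg^c]\mu,[g^cg^L]\mu)=S([g^Rg^c],[g^cg^L])$, and Lemma \ref{sandwich} forces this sandwich set to be $\{[g]\}$, so $[g]\mu=[g]$. As $\{[x],[x']\}\cup\{[g]:g\in\G^5\}$ generates $\fo$ (noted at the end of the proof of Proposition \ref{weakly}), $\mu$ agrees with the identity on a generating set and hence $\mu=\mathrm{id}_\fo$.

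Applied to $\mu=\varphi\psi$, this gives $\varphi\psi=\mathrm{id}_\fo$; therefore $\varphi$ is injective, and combined with its surjectivity from the first paragraph, $\varphi$ is an isomorphism $\fo\to T$. I do not foresee any serious obstacle beyond mild bookkeeping (for instance, when $\up(g)=2$ the entry $g^c=1$ contributes a vacuous $[1]$-factor in the expressions for $[g^Rg^c]$ and $[g^cg^L]$), so the inductive sandwich-set argument, powered by Lemma \ref{sandwich}, is the only real step.
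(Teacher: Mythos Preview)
Your proposal is correct and follows essentially the same approach as the paper: obtain homomorphisms $\varphi:\fo\to T$ and $\psi:T\to\fo$ fixing $x$ via the two universal properties, then show that the composite endomorphism of $\fo$ fixing $[x]$ must be the identity by appealing to Lemma~\ref{sandwich} and the fact that $\{[g]:g\in\G\setminus\{1\}\}$ generates $\fo$. The only difference is presentational---the paper compresses your inductive sandwich-set argument into a single sentence citing Lemma~\ref{sandwich}, and concludes directly that $\varphi$ and $\psi$ are mutually inverse isomorphisms rather than arguing injectivity and surjectivity of $\varphi$ separately.
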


\begin{proof}
By the universal property of both $T$ and $\fo$, there are surjective homomorphisms
$\varphi:\fo\to T$ and $\psi:T\to\fo$ such that $([x])\varphi=x$ and $x\psi=[x]$. Thus $\varphi\circ\psi$ is an endomorphism of $\fo$ such that $([x])\varphi\circ\psi=[x]$. By Lemma \ref{sandwich}, the restriction of $\varphi\circ\psi$ to the set $\{[g]\,|\;g\in\G\setminus\{1\}\}$ must be the identity mapping. Therefore, $\varphi\circ\psi$ is the identity automorphism of $\fo$ since this semigroup is generated by the previous set. We have shown that $\varphi$ and $\psi$ are mutually inverse isomorphisms, whence $T$ and $\fo$ are isomorphic.
\end{proof}

\section{Embedding $\ftt$ into $\fo$}\label{sec6}

In this section we prove that $\fo$ has a regular subsemigroup $\fot$ weakly generated by $\{[xx'],[x'x]\}$ isomorphic to $\ftt$. The first step is to construct $\fot$. After, we prove that $\fot$ is isomorphic to $\ftt$. It is also important that the reader recalls the notation used in Section \ref{sec3} for the elements of $\G_{2,e}$ (page \pageref{G2e}) and for some of the elements of $\G_{3,d}$ (page \pageref{G3d}). 

\subsection{The construction of $\fot$}

We begin by defining a subset $\G^\circ=\cup_{i\in\mathbb{N}}\G_i^\circ$ of $\G^5$. We set 
$$\G_1^\circ=\G_1=\{g_{xx'}\}\quad\mbox{ and }\quad \G_2^\circ=\G_2=\G_{2,e} =\{g_{2,e,1},g_{2,e,2}\}\,.$$ 
For $i>2$, $\G_i^\circ$ is always a proper subset of $\G_{i,d}\,$:
$$\G_3^\circ=\{g_{3,d,1},\;g_{3,d,2},\;g_{3,d,3},\;g_{3,d,4}\}\subset\G_{3,d}$$
and 
$$\G_{i+1}^\circ=\{g\in\G_{i+1,d}\,|\;g^l,g^r\in\G_i^\circ\}\,.$$
Hence $\G^\circ\subset \G_1\cup\G_2\cup\G_d\,$. The next result is obvious by induction on the indexes $i$. We omit its proof.

\begin{lem}\label{l61}
If $g\in\G^\circ$, then $\ep(g)\subseteq\G^\circ$.
\end{lem}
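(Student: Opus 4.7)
The plan is to prove this by induction on the height $\up(g)$, since the definitions of both $\G^\circ$ and $\ep(g)$ are recursive on height in parallel. The statement is essentially a closure property that was built into the definition of $\G^\circ_{i+1}$, so the argument should be short.

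For the base cases I would handle $\up(g)=1$ and $\up(g)=2$ directly. If $g\in\G_1^\circ$, then $g=g_{xx'}$ and $g^l=g^r=1$, so $\ep(g)=\{1,g_{xx'}\}$, which lies in $\G^\circ$ (with the usual convention that $1$ is treated as the bottom element — note that $1$ appears in every nonempty ground and cannot be avoided). If $g\in\G_2^\circ=\G_{2,e}$, then $g^l=g^r=g_{xx'}\in\G_1^\circ$, and the base case gives $\ep(g^l)=\ep(g^r)=\{1,g_{xx'}\}\subseteq\G^\circ$, hence $\ep(g)=\ep(g^l)\cup\{g\}\cup\ep(g^r)\subseteq\G^\circ$.

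For the inductive step I would fix $i\geq 2$ and assume the statement for all elements of $\G^\circ$ of height at most $i$. Let $g\in\G_{i+1}^\circ$. By the very definition of $\G_{i+1}^\circ$ (when $i\geq 2$), we have $g^l,g^r\in\G_i^\circ\subseteq\G^\circ$. The induction hypothesis then yields $\ep(g^l)\subseteq\G^\circ$ and $\ep(g^r)\subseteq\G^\circ$. Unfolding the recursive definition of the ground gives
\[
\ep(g)=\ep(g^l)\cup\{g\}\cup\ep(g^r)\subseteq\G^\circ,
\]
since $g\in\G^\circ$ by assumption. This closes the induction.

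There is essentially no obstacle here: the definition of $\G_{i+1}^\circ$ imposes precisely the condition $g^l,g^r\in\G_i^\circ$, which is exactly what the inductive step requires. The only small point worth noting is that the recursion for $\G^\circ$ starts to differ from that of $\G^5$ only at height $3$, so the base cases up to height $2$ must be treated separately (they coincide with $\G_1$ and $\G_2$, so everything works trivially). Note also that the middle entry $g^c$ does not require a separate argument: by the definition of $\G_d$ (conditions $(ii)$ and $(iii)$ in the definition of $\G_{i,d}$), $g^c$ is already one of the entries of $g^l$ or $g^r$, so $g^c\in\ep(g^l)\cup\ep(g^r)$ automatically.
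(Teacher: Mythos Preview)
Your proof is correct and follows exactly the approach the paper indicates: the paper omits the argument entirely, stating only that ``the next result is obvious by induction on the indexes $i$.'' Your parenthetical remark about $1$ is apt --- strictly speaking $1\notin\G^\circ=\bigcup_{i\in\mathbb{N}}\G_i^\circ$, so the statement should really be read as $\ep(g)\subseteq\G^\circ\cup\{1\}$ (which is indeed how the lemma is used downstream, e.g.\ in condition~(a) of the definition of $\mrx^\circ$).
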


Let $\mrx^\circ$ be the set of all nontrivial mountain ranges 
$$u=g_0a_1g_1\cdots g_{2n-1}a_{2n}g_{2n}$$
such that:
\begin{itemize}
\item[$(a)$] $g_0g_1\cdots g_{2n}\in (\G^\circ\cup\{1\})^+$; and
\item[$(b)$] if $\up(g_i)=1$, then $a_{i+1}=(a_i)'$.
\end{itemize} 
The next result shows us that $\mrx^\circ$ is closed for uplifting of rivers.

\begin{lem}\label{l62}
Let $u=g_0a_1g_1\cdots g_{2n-1}a_{2n}g_{2n}\in\mrx^\circ$ with a river $g_i$. If $v$ is such that $u\xr{g_i}v$, then $v\in\mrx^\circ$.
\end{lem}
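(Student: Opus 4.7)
The strategy is to verify conditions (a) and (b) of $\mrx^\circ$ for $v$, splitting on whether $h_i\in\G^5$ (length-preserving uplifting) or $h_i\notin\G^5$ (length-reducing uplifting). In both cases all letters of $v$ other than the possibly-new letter $h_i$ are letters of $u$, and most of their flanking anchors are preserved, so only a small number of local checks are non-trivial.

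In the length-preserving case, $h_i$ replaces $g_i$ in $v$ while the flanking anchors of $g_{i-1}$ and $g_{i+1}$ are unchanged, so condition (b) for every letter of $v$ is inherited from $u$ (in particular $\up(h_i)=\up(g_i)+2\geq 2$, so $h_i$ itself introduces no condition-(b) obligation). It only remains to prove $h_i\in\G^\circ$, and I would argue by the height $k=\up(g_i)$. If $k=0$ then $g_{i-1}=g_{i+1}=g_{xx'}$ and $h_i\in\G_2=\G_2^\circ$. If $k\geq 1$ I first show that $g_{i-1}\neq g_{i+1}$: if they were equal, then either condition~(b) at $g_i$ (when $k=1$, since $g_i=g_{xx'}$ forces $a_{i+1}=a_i'$) or the structural fact that members of $\G^\circ$ of height $\geq 3$ lie in $\G_d$ and hence have distinct left and right entries (when $k\geq 2$) would force $g_i$ to be obtained from $g_{i-1}$ and $g_{i+1}$ on the same side, giving $a_i=a_{i+1}'$ and pushing us outside the length-preserving case. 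With $g_{i-1}\neq g_{i+1}$ secured, $h_i\in\G_{k+2,d}$; and since $h_i^l=g_{i+1}$ and $h_i^r=g_{i-1}$ lie in $\G_{k+1}^\circ$ by condition~(a) for $u$, the recursive definition of $\G^\circ$ gives $h_i\in\G_{k+2}^\circ\subseteq\G^\circ$.

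In the length-reducing case we have $g_{i-1}=g_{i+1}$ and $a_i=a_{i+1}'$, and $v$ is obtained from $u$ by deleting $a_ig_ia_{i+1}g_{i+1}$. Condition (a) is immediate since the letter set of $v$ sits inside that of $u$. The only condition-(b) check not inherited from $u$ concerns $g_{i-1}$, whose right-flanking anchor in $v$ is now $a_{i+2}$: this is vacuous unless $\up(g_{i-1})=1$, and if $\up(g_{i-1})=1$ the identities $a_{i+2}=a_{i+1}'$ (condition~(b) for $g_{i+1}$ in $u$), $a_{i+1}'=a_i$ (case hypothesis), and $a_i=a_{i-1}'$ (condition~(b) for $g_{i-1}$ in $u$) combine to yield $a_{i+2}=a_{i-1}'$, as required. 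The main obstacle is the height-by-height analysis in the length-preserving case; the key observation that rescues everything is that inside $\mrx^\circ$ the configuration $g_{i-1}=g_{i+1}$ around a river of height $\geq 1$ always forces $a_i=a_{i+1}'$, so the potentially dangerous possibility that $h_i$ lands in $\G_e$ at height $\geq 3$ (and hence outside $\G^\circ$) never arises.
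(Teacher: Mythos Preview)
Your overall strategy matches the paper's proof exactly: split on the two cases of uplifting, check condition~(b) via the anchor bookkeeping, and for condition~(a) in the length-preserving case argue by the height $k=\up(g_i)$ that $h_i\in\G^\circ$. The length-reducing case and the subcases $k=0$ and $k\geq 2$ are handled correctly and just as in the paper.

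There is, however, a genuine slip in your treatment of $k=1$. You appeal to ``the recursive definition of $\G^\circ$'' to conclude $h_i\in\G_3^\circ$ from $h_i\in\G_{3,d}$ together with $h_i^l,h_i^r\in\G_2^\circ$. But the recursive clause $\G_{i+1}^\circ=\{g\in\G_{i+1,d}\mid g^l,g^r\in\G_i^\circ\}$ is stated only for $i\geq 3$; the set $\G_3^\circ$ is defined explicitly as four particular tuples. Since $\G_2^\circ=\G_2$, the condition $h_i^l,h_i^r\in\G_2^\circ$ is automatic for every element of $\G_{3,d}$, and $\G_{3,d}$ has eight elements, only four of which lie in $\G_3^\circ$. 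So as written your argument does not exclude the other four.

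The fix is already implicit in what you wrote: condition~(b) at $g_i=g_{xx'}$ gives $a_{i+1}=a_i'$ \emph{unconditionally}, not merely under the hypothesis $g_{i-1}=g_{i+1}$. Hence $h_i=(g_{i+1},a_{i+1}',g_{xx'},a_i,g_{i-1})$ has equal anchors $h_i^{la}=a_{i+1}'=a_i=h_i^{ra}$, and together with $g_{i-1}\neq g_{i+1}$ this pins $h_i$ down to one of the four tuples $g_{3,d,j}$ in $\G_3^\circ$. This is precisely how the paper handles the case $g_i=g_{xx'}$: it first uses condition~(b) to get $a_i=a_{i+1}'$, then deduces $g_{i-1}\neq g_{i+1}$ (since we are not in the length-reducing case), and concludes $h_i\in\G_3^\circ$ directly.
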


\begin{proof}
Assume first that $g_{i-1}=g_{i+1}$ and $a_i=a_{i+1}'$. Then 
$$v=g_0\cdots g_{i-1}a_{i+2}g_{i+2}\cdots g_{2n}\,.$$
Clearly $(a)$ is satisfied, and if $\up(g_{i-1})\neq 1$, then also $(b)$ is satisfied. If $\up(g_{i-1})=1$, then $\up(g_{i+1})=1$, and by condition $(b)$ (applied to $u$) we know that $a_{i-1}=a_i'$ and $a_{i+1}=a_{i+2}'$. Thus $a_{i-1}=a_{i+2}'$ since $a_i=a_{i+1}'$, and $v$ continues to satisfy $(b)$ even in this case. In conclusion, $v\in\mrx^\circ$ if $g_{i-1}=g_{i+1}$ and $a_i=a_{i+1}'$.

Assume now that $g_{i-1}\neq g_{i+1}$ or $a_i\neq a_{i+1}'$. Then $$h_i=(g_{i+1},a_{i+1}',g_i,a_i,g_{i-1})\in\G$$
and 
$$v=g_0\cdots g_{i-1}a_ih_ia_{i+1}g_{i+1}\cdots g_{2n}\,.$$
In this case, $v$ clearly satisfies $(b)$ since the anchors subsequence does not change and $\up(h_i)\neq 1$. If $g_i=1$, then $h_i\in\G_2=\G_2^\circ$ and $v$ satisfies $(a)$ also. If $g_i=g_{xx'}$, then $a_i=a_{i+1}'$ by condition $(b)$ (applied to $u$) and so $g_{i-1}\neq g_{i+1}$. Consequently, $h_i\in\G_3^\circ$ and $v$ satisfies $(a)$. Finally, if $\up(g_i)\geq 2$, then $g_{i-1},g_{i+1}\in\G_d$. If $g_{i-1}=g_{i+1}$, then also $a_{i}=a_{i+1}'$ since $g_{i-1}^l\neq g_{i-1}^r$, which is a contradiction. Therefore, $g_{i-1}\neq g_{i+1}$ and, once more, $v$ satisfies $(a)$ since $h_i\in\G^\circ$. Note that we have shown that $v\in\mrx^\circ$ if $g_{i-1}\neq g_{i+1}$ or $a_i\neq a_{i+1}'$. Thus, we have ended the proof of this result.
\end{proof}

We denote by $\mx^\circ$ the set of all mountains from $\mrx^\circ$. If $u\in\mx^\circ$, then $(a)$ is equivalent to $\ka(u)=g_n\in\G^\circ$ by Lemma \ref{l61}; while $(b)$ is equivalent to the anchors subsequence $a_1a_2\cdots a_{2n-1}a_{2n}$ having a prefix and a suffix from the set $\{11,x'x\}$. In particular, the only two mountains of $\mx^\circ$ of height $1$ are $\be_1(xx')=11g_{xx'}11$ and $\be_1(x'x)=1x'g_{xx'}x1$. In the next result we prove that $\mx^\circ$ is a regular subsemigroup of $\mx$.

\begin{prop}
$\mx^\circ$ is a regular subsemigroup of $\mx$.
\end{prop}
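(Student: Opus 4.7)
My plan splits into two parts: closure of $\mx^\circ$ under $\odot$, and the existence of an inverse inside $\mx^\circ$ for each element.

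For closure, I take $u,v\in \mx^\circ$ and consider $u*v$, which is a mountain range since $\tau(u)=1=\si(v)$. Condition $(a)$ of $\mrx^\circ$ is inherited from $u$ and $v$, since all letters remain in $\G^\circ\cup\{1\}$. For condition $(b)$, note that every height-$1$ position of $u*v$ lies entirely within $u$ or entirely within $v$, because the only newly created ``position'' is the glued letter $1$, which has height $0$; the anchor condition at each height-$1$ position is therefore inherited. Thus $u*v\in \mrx^\circ$, and iterated application of Lemma \ref{l62} gives $\be_2(u*v)\in \mrx^\circ$. Since $u*v$ is a mountain range, Lemma \ref{valelem}.$(iii)$ yields $u\odot v=\be(u*v)=\be_2(u*v)$, a mountain in $\mrx^\circ$, hence an element of $\mx^\circ$.

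For regularity, I would show that the reverse $\cev u$ of any $u\in \mx^\circ$ lies again in $\mx^\circ$. The reverse of a mountain is a mountain with the same letter content, so $(a)$ is automatic. For $(b)$: if a height-$1$ letter $g_i$ of $u$ is surrounded by anchors $a_i,a_{i+1}$ with $a_{i+1}=a_i'$, then the corresponding letter of $\cev u$ is flanked by $a_{i+1}'$ and $a_i'$, and these are mutual inverses exactly when $a_{i+1}=a_i'$. So $\cev u\in \mx^\circ$. Since $u\in \mxm\subseteq \ltp$, Proposition \ref{reg} gives that $[\cev u]$ is an inverse of $[u]$ in $\foo$, which translates to $\cev u$ being an inverse of $u$ in $(\mxm,\odot)$, and hence in $(\mx^\circ,\odot)$ as the operation is inherited from $\mxm$.

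The main obstacle is already packaged as Lemma \ref{l62}: the substantive case-analysis showing that uplifting a river of an element of $\mrx^\circ$ cannot destroy $(a)$ or $(b)$ has been isolated there. Once that is granted, the remaining verifications -- at the junction of $u*v$ and under reversal -- reduce to routine bookkeeping on anchors and heights.
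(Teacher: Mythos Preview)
Your proof is correct and follows the same approach as the paper's: closure via Lemma~\ref{l62} applied to $u*v\in\mrx^\circ$, and regularity via the reverse $\cev u$. You simply make explicit what the paper leaves implicit, namely the verification that $u*v$ satisfies conditions $(a)$ and $(b)$ at the junction and that $\cev u$ inherits both conditions; the paper's proof compresses these routine checks into the bare citations.
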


\begin{proof}
Let $u,v\in\mx^\circ$. By Lemma \ref{l62}, $u\odot v$ is a mountain from $\mrx^\circ$ and $\mx^\circ$ is a subsemigroup of $\mx$. Now, $\mx^\circ$ is regular because $\cev{u}\in\mx^\circ$ for every $u\in\mx^\circ$, and $\cev{u}$ is an inverse of $u$.
\end{proof}

We denote by $\fot$ the regular subsemigroup of $\fo$ corresponding to $\mx^\circ$. In the next subsection we prove that $\fot$ is isomorphic to $\ftt$.

\subsection{An isomorphism from $\ftt$ to $\fot$} It is important for this subsection that the reader recalls the construction of $\ftt$ described in Section \ref{sec2}. It should be clear at this point the connection between the theory developed here and the one developed in \cite{LO22}. Roughly speaking, we can look to the theory developed in \cite{LO22} as the simplified version for triples of the theory developed here, obtained by `dropping' the information given by the anchors. So, the meaning of notions like \emph{i-landscape} and \emph{i-mountain range} (called landscape and mountain range in \cite{LO22}) and of operations like uplifting of i-rivers should now be clear to the reader. Otherwise, we advice the reader to look for more information in \cite{LO22} before she/he proceeds since a clear understanding of these notions is important for what follows.

We begin by defining recursively a mapping
$$\varphi:\G^\circ\setminus\{g_{xx'}\}\to\Ho\setminus\{1,e,f\}\,.$$ 
We start by setting
$$(g_{2,e,i})\varphi=h_{2,i}\quad\mbox{ and }\quad (g_{3,d,j})\varphi=h_{3,j}\,,$$
for $i\in\{1,2\}$ and $j\in\{1,2,3,4\}$. Note that if $g_{3,d,j}=(g,a,g_{xx'},a,g_1)$, for $\G_2^\circ=\{g,g_1\}$ and $a\in\{1,x'\}$, then
$$(g_{3,d,i})\varphi=(g\varphi,e_1,g_1\varphi)\,,$$
where $e_1=e$ if $a=1$, or $e_1=f$ if $a=x'$. Extend $\varphi$ to all $g\in\G_i^\circ$ with $i>3$ by setting
$$g\varphi=(g^l\varphi,g^c\varphi,g^r\varphi)\,.$$
By the recursive definition, if $g^{ls}=g^c=g^{rt}$ for some $s,t\in\{l,r\}$, then $(g^l\varphi)^s=g^c\varphi=(g^r\varphi)^t$; whence $g\varphi\in\Ho$ and $\varphi$ is well defined. We will conclude next that $\varphi$ is a bijection by defining its inverse mapping.

Consider now the mapping $\psi:\Ho\setminus\{1,e,f\}\to\G^\circ\setminus\{g_{xx'}\}$ obtained recursively as follows. Set
$$(h_{2,i})\psi=g_{2,e,i}\quad\mbox{ and }\quad (h_{3,j})\psi=g_{3,d,j}$$
for $i\in\{1,2\}$ and $j\in\{1,2,3,4\}$. Observe that if $h_{3,j}=(h,e_1,h_1)$, for $\Ho_2=\{h,h_1\}$ and $e_1\in\Ho_1=\{e,f\}$, then
$$(h_{3,j})\psi=(h\psi,a,g_{xx'},a,h_1\psi)\,,$$
where $a=1$ if $e_1=e$, or $a=x'$ if $e_1=f$. Let now $h\in\Ho_i$ with $i>3$ and define recursively
$$h\psi=(h^l\psi,a,h^c\psi,b,h^r\psi)\,,$$
where $a'=(h^l\psi)^{sa}$ for $h^{ls}=h^c$ and $b'=(h^r\psi)^{ta}$ for $h^{rt}=h^c$. By the recursive definition and the choice of $a$ and $b$, the 5-tuple $h\psi$ belongs to $\G^\circ$ and $\psi$ is well defined.

Clearly $\varphi_{|\G_k^\circ}$ and $\psi_{|\Ho_k}$ are mutually inverse bijections between $\G_k^\circ$ and $\Ho_k$ for $k\in\{2,3\}$. If we now assume that  $\varphi_{|\G_j^\circ}$ and $\psi_{|\Ho_j}$ are mutually inverse bijections between $\G_j^\circ$ and $\Ho_j$ for all $j<i$, we can immediately conclude that $\varphi_{|\G_i^\circ}$ and $\psi_{|\Ho_i}$ are mutually inverse bijections between $\G_i^\circ$ and $\Ho_i$ too. We just need to observe that  the entries $g^l$, $g^c$ and $g^r$ of $g$ completely determine the other entries $g^{la}$ and $g^{ra}$ for all $g\in\G_i^\circ$. Therefore, we can conclude by induction that $\varphi_{|\G_i^\circ}$ and $\psi_{|\Ho_i}$ are mutually inverse bijections between $\G_i^\circ$ and $\Ho_i$ for all $i\geq 2$. Thus $\varphi$ and $\psi$ are also mutually inverse bijections between $\G^\circ\setminus\{g_{xx'}\}$ and $\Ho\setminus\{1,e,f\}$. Note further that $\varphi$ and $\psi$ have the following property:
$$(g^s)\varphi=(g\varphi)^s\quad\mbox{ and }\quad (h^t)\psi=(h\psi)^t$$
for any $s,t\in\{l,c,r\}$.

Denote by $\mri$ and $\mi$ the sets of all nontrivial i-mountain ranges and i-mountains, respectively, from $\Ho^+$. Next, we define two mutually inverse bijections $\ol{\varphi}$ and $\ol{\psi}$ between $\mrx^\circ$ and $\mri$ by applying naturally the mappings $\varphi$ and $\psi$ to each letter. There is however a problem with this approach: both $\varphi$ and $\psi$ are not yet defined for letters of height less than $2$, and their definition for letters of height 1 is not so obvious since $\G_1^\circ$ has only one letter, namely $g_{xx'}$, while $\Ho_1^\circ$ has two letters, namely $e$ and $f$.

Let $\G^\circ\!'=(\G^\circ\setminus\{g_{xx'}\})\cup\{1,1g_{xx'}1,x'g_{xx'}x\}\,$ and extend $\varphi$ to a bijection from $\G^\circ\!'$ onto $\Ho$ by setting
$$1\varphi=1,\quad (1g_{xx'}1)\varphi=e,\quad (x'g_{xx'}x)\varphi=f\,.$$
If $u=g_0a_1g_1\cdots g_{2n-1}a_{2n}g_{2n}\in\mrx^\circ$, define
$$u\ol{\varphi}=h_0h_1\cdots h_{2n-1}h_{2n}\,,$$
where $h_i=(a_ig_{xx'}a_{i+1})\varphi$ for $g_i=g_{xx'}$, and $h_i=g_i\varphi$ for the other cases. The mapping $\ol{\varphi}$ is well defined since $a_ia_{i+1}\in\{11,x'x\}$ if $g_i=g_{xx'}$ by condition $(b)$ of the definition of $\mrx^\circ$. Also by definition of $\varphi$, the word $u\ol{\varphi}$ is clearly a nontrivial i-landscape; whence $\ol{\varphi}$ is a mapping from $\mrx^\circ$ to $\mri$.

Extend now $\psi$ to the inverse mapping of $\varphi:\G^\circ\!'\to\Ho$ and let $v=h_0h_1\cdots h_{2n-1}h_{2n}$ be a nontrivial i-mountain range. Set $g_i=g_{xx'}$ if $h_i\in\{e,f\}$, or $g_i=h_i\psi$ otherwise. If $h_i=e$ [$h_i=f$], set also $a_i=1=a_{i+1}$ [$a_i=x'$ and $a_{i+1}=x$]; whence $h_i\psi=a_ig_ia_{i+1}$ if $h_i\in\{e,f\}$. If $\up(h_{i-1})\geq 2$ and $\up(h_i)\geq 2$, then only one of the letters $g_{i-1}$ or $g_i$ is the left or the right entry of the other (and not both entries). Hence, there is only one anchor $a_i$ that turns the triplet $g_{i-1}a_ig_i$ anchored. Define now
$$v\ol{\psi}=g_0a_1g_1\cdots g_{2n-1}a_{2n}g_{2n}\,.$$
By the previous observations and the definition of $\psi$, $v\ol{\psi}$ is a mountain range from $\mrx^\circ$. A close inspection allows us to conclude also that $\ol{\varphi}$ and $\ol{\psi}$ are inverse mappings since $\varphi$ and $\psi$ are inverse bijections too. 

Note that both $u\in\mrx^\circ$ and $u\varphi$ have the same line graph structure: they differ only on the labels. In particular, $\ol{\varphi}_{|\mx^\circ}$ and $\ol{\psi}_{|\mi}$ are mutually inverse bijections between the sets $\mx^\circ$ and $\mi$. We leave these observations registered in the following lemma for future reference.

\begin{lem}
The mappings $\ol{\varphi}:\mrx^\circ\to\mri$ and $\ol{\psi}:\mri\to\mrx^\circ$ are mutually inverse. Further, their restrictions $\ol{\varphi}_{|\mx^\circ}$ and $\ol{\psi}_{|\mi}$ to the sets of (nontrivial) mountains from $\mrx^\circ$ and (nontrivial) i-mountains, respectively, give us mutually inverse bijections between these sets.
\end{lem}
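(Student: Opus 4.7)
The plan is to verify both claims by unraveling the definitions, leveraging the fact that $\varphi$ and $\psi$ are already known to be mutually inverse bijections between $\G^\circ\!'$ and $\Ho$. First, I would check well-definedness of both maps into the stated target sets. For $\ol{\varphi}$, this was essentially observed during the construction: replacing each letter $g_i$ by $g_i\varphi$ (or by $(a_ig_{xx'}a_{i+1})\varphi \in \{e,f\}$ when $g_i=g_{xx'}$) preserves the line graph structure, and sends the endpoints $g_0=g_{2n}=1$ to $1$, producing a nontrivial i-mountain range. For $\ol{\psi}$, I would verify that $v\ol{\psi}$ has endpoints $1$, that each triplet $g_{i-1}a_ig_i$ is anchored (which holds by the explicit rules that determine $a_i$), and that conditions $(a)$ and $(b)$ of $\mrx^\circ$ hold: $(a)$ follows from Lemma~\ref{l61}, since $h_i\psi\in\G^\circ$ whenever $\up(h_i)\geq 2$; and $(b)$ holds because whenever $g_i=g_{xx'}$ occurs, the rule in the definition of $\ol{\psi}$ for $h_i\in\{e,f\}$ sets $(a_i,a_{i+1})$ either to $(1,1)$ or $(x',x)$, both of which satisfy $a_{i+1}=a_i'$.

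Next, I would establish mutual invertibility by checking letter-by-letter that $\ol{\psi}\circ\ol{\varphi}$ fixes $\mrx^\circ$ and $\ol{\varphi}\circ\ol{\psi}$ fixes $\mri$. Since $\varphi$ and $\psi$ are inverse on $\G^\circ\!'$, the letters themselves invert correctly; the only nontrivial point is that the anchors erased by $\ol{\varphi}$ are reconstructed by $\ol{\psi}$. I would split into three cases based on the heights of consecutive letters: when $\up(g_{i-1}),\up(g_i)\geq 2$, both letters lie in $\G^\circ\setminus\{g_{xx'}\}$, and their structural relationship (one is a left or right entry of the other, with the choice unique since these entries are distinct in $\G^\circ$ for heights $\geq 3$, or determined by $g^c=1$ at height $2$) forces $a_i$ to match the original anchor; when one neighbor is $g_{xx'}$, the pair of anchors flanking $g_{xx'}$ was already encoded into $e$ or $f$, and is recovered verbatim by the corresponding rule in $\ol{\psi}$; when one neighbor is $1$ (which occurs only at the endpoints of a mountain range), the adjacent letter is necessarily $g_{xx'}$ and we reduce to the previous case.

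Finally, for the restriction to mountains, I would use the observation (made just before the lemma statement) that $u$ and $u\ol{\varphi}$ have the same line graph structure, so the heights at corresponding positions agree. Consequently, a letter $g_i$ is a river of $u$ if and only if the corresponding letter of $u\ol{\varphi}$ is an i-river, and $u$ has no rivers if and only if $u\ol{\varphi}$ has no i-rivers. This immediately yields the desired bijection between $\mx^\circ$ and $\mi$. The main obstacle I anticipate is the second case of the anchor-recovery analysis: making sure that no information is lost when we collapse the anchor-pair surrounding $g_{xx'}$ into a single letter of $\{e,f\}$, which is precisely what condition $(b)$ of $\mrx^\circ$ together with the explicit encoding rule guarantees.
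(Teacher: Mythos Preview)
Your proposal is correct and follows exactly the approach the paper intends: the paper itself does not prove this lemma but records it as an immediate consequence of the preceding construction, remarking only that ``a close inspection allows us to conclude that $\ol{\varphi}$ and $\ol{\psi}$ are inverse mappings since $\varphi$ and $\psi$ are inverse bijections too'' and that ``$u$ and $u\ol{\varphi}$ have the same line graph structure.'' Your letter-by-letter verification and anchor-recovery case analysis are precisely the close inspection the paper alludes to; the only minor quibble is that condition $(a)$ for $v\ol{\psi}$ follows directly from the codomain of $\psi$ rather than from Lemma~\ref{l61}.
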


It is not so hard to show now that $\ol{\varphi}_{|\mx^\circ}$ is an isomorphism. In fact, if we look carefully to the proof of Lemma \ref{l62}, we see that if
$$u=g_0a_1g_1\cdots g_{2n-1}a_{2n}g_{2n}\in\mrx^\circ\,,\quad u\ol{\varphi}=h_0h_1\cdots h_{2n-1}h_{2n}\,,$$
and $u\xr{g_i}v$ for some river $g_i$, then $u\ol{\varphi}\xr{h_i}v\ol{\varphi}\,$. Thus $$(u_1\odot u_2)\ol{\varphi}_{|\mx^\circ}=(u_1\ol{\varphi}_{|\mx^\circ})\odot (u_2\ol{\varphi}_{|\mx^\circ})$$
by recursion. We have proved the following result:

\begin{prop}
$\ol{\varphi}_{|\mx^\circ}:\mx^\circ\to\mi$ is an isomorphism with inverse isomorphism $\ol{\psi}_{|\mi}$.
\end{prop}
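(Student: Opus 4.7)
The plan is that the preceding lemma already gives us that $\ol{\varphi}_{|\mx^\circ}$ and $\ol{\psi}_{|\mi}$ are mutually inverse bijections, so the only content left to prove is that $\ol{\varphi}_{|\mx^\circ}$ preserves the operation $\odot$. I would establish this via a single technical claim, after which the rest is formal.

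The technical claim is that $\ol{\varphi}$ intertwines uplifting of rivers with uplifting of i-rivers: if $u = g_0 a_1 g_1 \cdots a_{2n} g_{2n} \in \mrx^\circ$ and $u \xr{g_i} v$ for some river $g_i$, then, writing $u\ol{\varphi} = h_0 h_1 \cdots h_{2n}$, the letter $h_i$ is an i-river of $u\ol{\varphi}$ and $u\ol{\varphi} \xr{h_i} v\ol{\varphi}$. To prove this I would mirror the case split of the proof of Lemma \ref{l62}. In the new-ridge case, the replacement letter is $h_i' = (g_{i+1}, a_{i+1}', g_i, a_i, g_{i-1}) \in \G^\circ$, and by the defining recursion of $\varphi$ on $\G^\circ \setminus \{g_{xx'}\}$, $h_i'\varphi = ((g_{i+1})\varphi, (g_i)\varphi, (g_{i-1})\varphi) = (h_{i+1}, h_i, h_{i-1})$, which is exactly the new triple produced by the i-uplift (it lies in $\Ho$ since $h_{i-1} \neq h_{i+1}$). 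In the collapse case, I must check that ``$g_{i-1} = g_{i+1}$ and $a_i = a_{i+1}'$'' is equivalent to ``$h_{i-1} = h_{i+1}$''; this follows from $\ol{\varphi}$ being injective on letters of height at least two, while at height $1$ the anchor pair $a_i a_{i+1}$ (necessarily $11$ or $x'x$ by condition (b)) is exactly what determines whether the image is $e$ or $f$, so the two conditions match.

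With the claim established, the argument ends routinely. For $u_1, u_2 \in \mx^\circ$, the product $u_1 \odot u_2 = \be_2(u_1 * u_2)$ is obtained by iterating river-uplifts on $u_1 * u_2$ until no river remains. Each uplift is mirrored step by step on the i-side starting from $(u_1 * u_2)\ol{\varphi} = (u_1\ol{\varphi}) * (u_2\ol{\varphi})$, producing a chain of i-uplifts whose endpoint is $(u_1 \odot u_2)\ol{\varphi}$. Since this endpoint contains no i-rivers (its $\ol{\psi}$-preimage has none), by uniqueness of the fully i-uplifted form it coincides with $(u_1\ol{\varphi}) \odot (u_2\ol{\varphi})$. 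Thus $\ol{\varphi}_{|\mx^\circ}$ is a semigroup homomorphism, and combined with the bijection property it is an isomorphism with inverse $\ol{\psi}_{|\mi}$.

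The main obstacle is the collapse case of the claim, where the anchor structure present in $\mrx^\circ$ but absent from $\mi$ must be handled carefully: one has to verify that condition (b) of the definition of $\mrx^\circ$ at height $1$ aligns perfectly with the binary choice $\{e,f\}$ on the i-side, and that at height $\geq 2$ the elements of $\G^\circ$ (having distinct left and right entries) render the ``side'' a child occupies unambiguous, so that equality of two $\G^\circ$-letters on the anchored side is equivalent to equality of their $\varphi$-images on the i-side.
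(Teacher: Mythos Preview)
Your proposal is correct and follows essentially the same route as the paper: the paper's proof also reduces to showing that $\ol{\varphi}$ intertwines uplifting of rivers with uplifting of i-rivers (referring back to the case analysis in Lemma~\ref{l62}), and then concludes the homomorphism property by iterating this through the computation of $\be_2$. You have simply made explicit the height-$0$, height-$1$, and height-$\geq 2$ cases that the paper leaves to the reader under ``if we look carefully to the proof of Lemma~\ref{l62}.''
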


\begin{cor}\label{c66}
$\fot$ is a regular subsemigroup of $\fo$ weakly generated by the set of idempotents $\{[xx'],[x'x]\}$ and isomorphic to $\ftt$.
\end{cor}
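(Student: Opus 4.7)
My plan is to assemble the corollary from the three ingredients already built up: (1) $\mx^\circ$ is a regular subsemigroup of $\mx$ (just proved); (2) the immediately preceding proposition provides the isomorphism $\overline{\varphi}_{|\mx^\circ}\colon\mx^\circ\to\mi$; (3) Section~\ref{sec2} recalls that $(\mi,\odot)$ is a model for $\ftt$. Via the canonical-form identification $\fot\cong\mx^\circ$, (1) gives the first clause of the corollary and composing the isomorphisms in (2) and (3) yields $\fot\cong\ftt$.

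For the weak-generation clause, I would first pin down the images of the two distinguished elements under the composite isomorphism. The canonical forms of $[xx']$ and $[x'x]$ are $\beta_1(xx')=11g_{xx'}11$ and $\beta_1(x'x)=1x'g_{xx'}x1$; these are exactly the two height-$1$ mountains in $\mx^\circ$. Applying $\overline{\varphi}$ and unpacking the definition of $\varphi$ on the ``padded'' letters $1g_{xx'}1$ and $x'g_{xx'}x$, they map to the i-mountains $1e1$ and $1f1$, i.e.\ to the $\varrho$-classes of the generating idempotents $e,f$ of $\ftt$. I would also note in passing that $[xx']$ and $[x'x]$ are genuinely idempotents: the first because $[xx']=[g_{xx'}]$ and every $[g]$ is an idempotent by $\rho_e$, and the second by a one-line check using $xx'x\approx x$.

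The weak-generation statement then transfers through the isomorphism. Let $T$ be any regular subsemigroup of $\fot$ containing $\{[xx'],[x'x]\}$. Its image under the isomorphism $\fot\to\ftt$ is a regular subsemigroup of $\ftt$ containing $\{e\varrho,f\varrho\}$; by the universal property of $\ftt$ recalled at the end of Section~\ref{sec2} (namely, $\ftt$ is weakly generated by its two generating idempotents), this image equals $\ftt$, whence $T=\fot$.

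I do not expect any serious obstacle: the conceptual work—the construction of $\mx^\circ$, the closure under uplifting (Lemma~\ref{l62}), and the isomorphism $\overline{\varphi}_{|\mx^\circ}$—has already been done. The only slightly delicate point is the bookkeeping that correctly identifies $[xx']\overline\varphi$ and $[x'x]\overline\varphi$ with $e$ and $f$, since the extension of $\varphi$ to height-$1$ letters was defined on padded triples $a_ig_{xx'}a_{i+1}$ rather than on $g_{xx'}$ alone; once this is made explicit, the transfer of weak generation across the isomorphism is immediate.
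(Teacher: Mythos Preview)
Your proposal is correct and follows essentially the same route as the paper: use the isomorphism $\overline{\varphi}_{|\mx^\circ}$ (the paper phrases it via the inverse $\overline{\psi}_{|\mi}$) to identify $\fot\cong\ftt$, check that $[xx'],[x'x]$ correspond to $e\varrho,f\varrho$, and then transfer the known weak-generation property of $\ftt$ across the isomorphism. One small notational slip: the canonical forms you quote are $\beta(xx')$ and $\beta(x'x)$, not $\beta_1(xx')$ and $\beta_1(x'x)$ (the latter are longer mountain ranges that reduce to the former via uplifting); this does not affect the argument.
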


\begin{proof}
Since $\fot$ is isomorphic to $\mx^\circ$ and $\ftt$ is isomorphic to $\mi$, then $\fot$ is isomorphic to $\ftt$ by the previous proposition. In fact, the isomorphism $\ol{\psi}_{|\mi}$ induces an isomorphism $\phi:\ftt\to\fot$ such that $(e\varrho)\phi=[xx']$ and $(f\varrho)\phi=[x'x]$. Since $\ftt$ is weakly generated by the set $\{e\varrho,f\varrho\}$, then $\fot$ is weakly generated by $\{[xx'],[x'x]\}$. 
\end{proof}

Since $\phi$ is an isomorphism, we know from \cite{LO22} that $\fot$ is generated by the set $((\Ho\setminus\{1\})\varrho)\phi$. At first sight one may be lead to believe that $((\Ho\setminus\{1\})\varrho)\phi=\{[g]\,|\;g\in\G^\circ\}\cup\{[x'x]\}$ since $\Ho\psi= (\G^\circ\setminus \{g_{xx'}\})\cup\{1,1g_{xx'}1,x'g_{xx'}x\}$; and therefore one may be lead to think that $\fot$ is generated by the set $\{[g]\,|\;g\in\G^\circ\}\cup\{[x'x]\}$. However, this is far from being true. In fact, we cannot even guarantee that all $[g]$ belong to $\fot$ for $g\in\G^\circ$. For example, if we take 
$$g_{3,d,2}=(g_{2,e,1},x',g_{xx'},x,g_{2,e,2})\in\G_3^\circ\,,$$
then $\be(g_{3,d,2})=\be_1(g_{3,d,2})$ is the mountain
$$11g_{xx'}xg_{2,e,1}x'g_{3,d,2}xg_{2,e,2}x'g_{xx'}11\,,$$
which does not belong to $\mx^\circ$ since it fails to satisfy the condition $(b)$. Hence $[g_{3,d,2}]\not\in\fot$. Note that, in this case, $[x'g_{3,d,2}x]\in\fot$ as
$$\be(x'g_{3,d,2}x)=1x'g_{xx'}xg_{2,e,1}x'g_{3,d,2}xg_{2,e,2}x'g_{xx'}x1\in\mx^\circ\,.$$

The example of the previous paragraph somehow illustrates what happens in general. A deeper analyzes into which $[g]$ belong to $\fot$, for $g\in\G^\circ$ with $\up(g)\geq 2$, leads us to three distinct cases. In two of them we conclude that $[g]$ belongs to $\fot$, but in the third is the $\rho$ congruence class $[x'gx]$ that belongs to $\fot$ (and not $[g]$). The three distinct cases are the following ones:
\begin{itemize}
\item[$(i)$] If $\up(g)$ is even, then $\be_1(g)\in\mx^\circ$ and $[g]\in\fot$.
\item[$(ii)$] If $\up(g)=2n+1$, then $g^{c^{n-1}}\in\G_3^\circ$. If $g^{c^{n-1}}=g_{3,d,i}$ for $i$ odd, then $11g_{xx'}1$ is a prefix of $\be_1(g)$ and $1g_{xx'}11$ is a suffix of $\be_1(g)$. We can now see that $\be_1(g)$ also belongs to $\mx^\circ$, and so $[g]\in\fot$.
\item[$(iii)$] If $\up(g)=2n+1$ with $g^{c^{n-1}}=g_{3,d,i}$ for $i$ even, then $11g_{xx'}x$ is a prefix of $\be_1(g)$ and $x'g_{xx'}11$ is a suffix of $\be_1(g)$. Hence $\be_1(g)\not\in\mx^\circ$ because it fails to satisfy the condition $(b)$; whence $[g]\not\in\fot$. However, $\be(x'gx)$ is equal to $\be(g)$ except for the first anchor, which becomes $x'$, and the last anchor, which becomes $x$. So $\be(x'gx)\in\mx^\circ$ and $[x'gx]\in\fot$.
\end{itemize}	
Let $\ol{g}=g$ for the two first cases and $\ol{g}=x'gx$ for the last one. A careful analyzes to the definition of $\phi$ allows us to conclude that
$$((\Ho\setminus\{1\})\varrho)\phi=\{[xx'],[x'x]\}\cup\{[\ol{g}]\,|\; g\in\G^\circ\mbox{ with } \up(g)\geq 2\}\,.$$
Consequently, $\fot$ is generated by the previous set.

In \cite[Corollary 6.10]{LO22} we proved that each regular semigroup weakly generated by a finite set of idempotents strongly divides $\ftt$, that is, it is a homomorphic image of a regular subsemigroup of $\ftt$. Using the Corollary \ref{c66} we now get the following result:

\begin{cor}
All regular semigroups weakly generated by a finite set of idempotents strongly divide $\fo$.
\end{cor}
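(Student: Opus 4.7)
The plan is to chain together the cited result \cite[Corollary 6.10]{LO22} with the embedding established in Corollary \ref{c66}. Let $S$ be a regular semigroup weakly generated by a finite set of idempotents. By the cited corollary from \cite{LO22}, $S$ strongly divides $\ftt$, so there exist a regular subsemigroup $T$ of $\ftt$ and a surjective homomorphism $\pi: T \to S$.

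Next I would invoke Corollary \ref{c66}, which gives an isomorphism $\phi:\ftt\to\fot$, where $\fot$ is a regular subsemigroup of $\fo$. Then $T\phi$ is a regular subsemigroup of $\fot$, and since being a regular subsemigroup is transitive, $T\phi$ is also a regular subsemigroup of $\fo$. The composition $\phi^{-1}\pi:T\phi\to S$ is then a surjective homomorphism from a regular subsemigroup of $\fo$ onto $S$, which by definition means $S$ strongly divides $\fo$.

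The essential content is already done: the substantive work is Corollary \ref{c66} (the embedding of $\ftt$ into $\fo$) together with the reference \cite[Corollary 6.10]{LO22}. No real obstacle remains; the only point worth stating carefully is the transitivity of ``being a regular subsemigroup'', so that a regular subsemigroup of $\fot$ is automatically a regular subsemigroup of $\fo$. Thus the proof reduces to a two-line argument combining these two facts.
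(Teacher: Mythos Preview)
Your proposal is correct and matches the paper's approach: the paper simply states the corollary as an immediate consequence of Corollary~\ref{c66} together with \cite[Corollary~6.10]{LO22}, without writing out any further details. Your spelled-out argument is exactly the intended chain of reasoning.
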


Note that the previous corollary applies in particular to all regular semigroup generated by a finite set of idempotents and to all finite idempotent generated semigroups. In \cite[Corollary 6.11]{LO22} we concluded that all finite semigroups divide $\ftt$, that is, are homomorphic images of (not necessarily regular) subsemigroup of $\ftt$. We now have also that:

\begin{cor}
All finite semigroups divide $\fo$.
\end{cor}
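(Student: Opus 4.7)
The plan is essentially to chain two facts together. First, from \cite[Corollary 6.11]{LO22} we already know that every finite semigroup $S$ divides $\ftt$, that is, there is a subsemigroup $T\leq\ftt$ and a surjective homomorphism $\pi\colon T\to S$. Second, by Corollary \ref{c66}, the map $\phi\colon\ftt\to\fot\subseteq\fo$ is an embedding of $\ftt$ into $\fo$.

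So the proof I would write is a one-line combination: take the finite semigroup $S$, pick $T$ and $\pi$ as above, and consider $T'=T\phi$. Because $\phi$ is an isomorphism onto $\fot$, $T'$ is a subsemigroup of $\fot$, hence of $\fo$, and $\phi_{|T}\colon T\to T'$ is an isomorphism. Therefore the composite $\pi\circ(\phi_{|T})^{-1}\colon T'\to S$ is a well-defined surjective homomorphism, witnessing that $S$ is a homomorphic image of the subsemigroup $T'$ of $\fo$, i.e.\ $S$ divides $\fo$.

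There is no real obstacle here; the statement is a formal consequence of Corollary \ref{c66} together with \cite[Corollary 6.11]{LO22}. The only thing worth a brief sentence of justification is that division is transitive in the obvious sense: if $S$ divides $A$ and $A$ embeds into $B$, then $S$ divides $B$. This is immediate because a subsemigroup of $A$ maps isomorphically onto a subsemigroup of $B$ under the embedding, and composing with the given quotient homomorphism produces the required surjective map onto $S$.
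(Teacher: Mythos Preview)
Your proposal is correct and matches the paper's approach exactly: the paper states this corollary without proof, treating it as an immediate consequence of Corollary~\ref{c66} together with \cite[Corollary~6.11]{LO22}, and your argument simply spells out the transitivity of division that makes this work.
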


\vspace*{.5cm}

\noindent{\bf Acknowledgments}: This work was partially supported by CMUP, member of LASI, which is financed by (Portuguese) national funds through FCT - Fundação para a Ciência e a Tecnologia, I.P., under the project with reference UIDB/00144/2020. The author also would like to acknowledge the importance of the GAP software \cite{gap}, and its Semigroup package \cite{mitchell}, in the research presented in this paper: the several simulations computed in GAP allowed the emergence of the pattern used to construct the set $\G$.

\end{document}